\theoremstyle{plain}
\newtheorem{thm}{Theorem}[section]
\theoremstyle{plain}
\newtheorem{lem}[thm]{Lemma}
\theoremstyle{definition}
\newtheorem{defi}{Definition}[section]
\newtheorem{rem}{Remark}[section]
\newtheorem*{maintheorem*}{Main Theorem}
\newtheorem*{maincorollary*}{Main Corollary}
\newcommand*{\mysqrt}[4]{\sqrt[\leftroot{#1}\uproot{#2}#3]{#4}}
\newenvironment{Assumptions}
{
\setcounter{enumi}{0}

\begin{enumerate}}
{\end{enumerate} }
\newcommand{\norm}[1]{\left\|#1\right\|}
\newcommand{\abs}[1]{\left|#1\right|}
\newcommand{\Dx}{{\Delta x}}
\newcommand{\seq}[1]{\left\{#1\right\}}
\newcommand{\R}{\ensuremath{\mathbb{R}}}
\newcommand{\N}{\ensuremath{\mathbb{N}}}
\newcommand{\E}{\ensuremath{\mathbb{E}}}
\newcommand{\supp}{\ensuremath{\mathrm{supp}\,}}
\newcommand{\goto}{\ensuremath{\rightarrow}}
\newcommand{\eps}{\ensuremath{\varepsilon}}
\newcommand{\Z}{\mathbb{Z}}
\def\N{{I\!\!N}}
\numberwithin{equation}{section} \allowdisplaybreaks
\title[rate of convergence]
{A Finite difference scheme for conservation laws \\driven by L\'{e}vy noise}
\date{\today}
\keywords{Conservation Laws; L\'{e}vy Noise;
 Stochastic Entropy Solution; Young measures; Finite difference schemes, Error Estimates}
\thanks{Supported in part by Indo-French Centre for Applied Mathematics (IFCAM) and Institute for the Sustainable Engineering of Fossil Resources (ISIFoR)}
\author[Ujjwal Koley]{Ujjwal Koley}
\address[Ujjwal Koley]{\newline
	Centre for Applicable Mathematics,
	Tata Instiute of Fundamental Research,
	P.O.\ Box 6503, GKVK Post Office,
	Bangalore 560065, India}
\email[]{ujjwal@math.tifrbng.res.in}
\author[Ananta K. Majee]{Ananta K. Majee}
\address[Ananta K. Majee]{\newline
	Centre for Applicable Mathematics,
	Tata Instiute of Fundamental Research,
	P.O.\ Box 6503, GKVK Post Office,
	Bangalore 560065, India}
\email[]{ananta@math.tifrbng.res.in}
\author[Guy Vallet]{Guy Vallet}
\address[Guy Vallet]{\newline 
	LMAP UMR- CNRS 5142, IPRA BP 1155, 64013 Pau Cedex, France}
\email[]{guy.vallet@univ-pau.fr}
\begin{document}
\begin{abstract}
In this paper, we analyze a semi-discrete finite difference scheme for a conservation laws driven by
a homogeneous multiplicative L\'{e}vy noise. Thanks to BV estimates, we show a compact sequence of approximate solutions, generated by the finite difference scheme, converges to the unique entropy solution of the underlying problem, as the spatial mesh size $\Delta x \mapsto 0$. Moreover, we show that the expected value of the $L^1$-difference between the approximate solution and the unique entropy solution converges at a rate $\mathcal{O}(\sqrt{\Delta x})$.  
\end{abstract}

\maketitle
\tableofcontents
\section{Introduction}
 Let $\big(\Omega, \mathbb{P}, \mathcal{F}, \{\mathcal{F}_t\}_{t\ge 0} \big)$ be a filtered probability space satisfying 
 the usual hypothesis i.e.,  $\{\mathcal{F}_t\}_{t\ge 0}$ is a right-continuous filtration such that $\mathcal{F}_0$ 
 contains all the $\mathbb{P}$-null subsets of $(\Omega, \mathcal{F})$. In this paper, we are interested in numerical approximations of an $L^2(\R)$-valued predictable process $u(t, \cdot)$ which satisfies the 
Cauchy problem
  \begin{equation}
 \label{eq:stoc_con_laws}
 \begin{cases} 
 du(t,x) + \partial_x f(u(t,x))\,dt
 =\sigma(u(t,x)) \,dW(t) + \int_{|z|>0} \eta(u(t,x);z)\widetilde{N}(dz,dt), & \quad x \in \Pi_T, \\
 u(0,x) = u_0(x), & \quad  x\in \R,
 \end{cases}
 \end{equation}
 where $\Pi_T=\R \times (0,T)$ with $T>0$ fixed, $u_0(x)$ is the given initial
 function, and $f:\R \mapsto \R$ is a given (sufficiently smooth) scalar valued flux function 
(see Section~\ref{sec:pre} for the complete list of assumptions). Moreover, $W(t)$ is a real valued Brownian noise and $ \widetilde{N}(dz,dt)= N(dz,dt)-m(dz)\,dt $, 
where $N$ is a Poisson random measure on $\R\times (0,\infty)$ with intensity measure $m(dz)$, a Radon measure on $\R \setminus \lbrace0\rbrace$
with a possible singularity at $z=0$ satisfying $\int_{|z|>0} (1\wedge |z|^2)\,m(dz) <  + \infty$.\footnote{Here we denote $ x \wedge y :=\min{\lbrace x,y \rbrace}$}
Furthermore, $ u \mapsto \sigma (u)$ and $(u,z)\mapsto \eta(u,z)$ are given real valued functions signifying
the multiplicative nature of the noise.

In the case $\sigma=\eta=0$, the equation \eqref{eq:stoc_con_laws} becomes 
a standard conservation law in $\R$.
For the conservation laws, well-posedness analysis has a very long tradition 
and it goes back to the $1950$s. 
However, we will not be able to discuss the whole literature here, 
but only mention a few landmark results.    
The question of existence and uniqueness of solutions of conservation laws was first settled in the 
pioneer papers of  Kru\v{z}kov \cite{kruzkov} and Vol'pert \cite{Volpert}. For a completely satisfactory 
well-posedness theory of conservation laws, 
we refer to the monograph of Dafermos \cite{dafermos}. See also \cite{godu} and references therein.

\subsection{Stochastic Balance Laws}   
In recent years, there has been a growing interest in the study of conservation laws driven by stochastic noise. It has been attracted the attention of many authors, and
resulted in significant momentum in the theoretical development of such problems. For some first results in that direction, see Holden and Risebro \cite{risebroholden1997}. E, Khanin, Mazel, and
Sinai \cite{khanin} described the statistical properties of the Burgers equation with Brownian noise. Kim \cite{Kim2003} extended the Kru\v{z}kov well-posedness theory to one dimensional balance laws that are driven by additive Brownian noise. However, when the noise is of multiplicative nature, one could not apply a straightforward Kru\v{z}kov's doubling method to get uniqueness. This issue was settled by Feng $\&$ Nualart \cite{nualart:2008}, who established uniqueness of entropy solution by recovering additional information from the vanishing viscosity method. The existence was proven using stochastic version of compensated compactness method and it was valid for \emph{one} spatial dimension.
To overcome this problem, Debussche $\&$ Vovelle \cite{Vovelle2010} introduced
kinetic formulation of such problems and as a result they were able to establish the well-posedness
of multidimensional stochastic balance law via \emph{kinetic} approach. 
A number of authors have contributed since then, and we mention the works of Bauzet et al. \cite{BaVaWit},
Biswas et al. \cite{BisMaj,BisKarlMaj}.
We also mention works by Chen et al. \cite{Chen:2012fk}, and Biswas et al. \cite{BisKoleyMaj}, where well-posedness of entropy solution is established in $L^p \cap BV$, via BV framework. 
Moreover, they were able to develop continuous dependence theory for multidimensional balance laws and as a by product they derived an explicit \emph{convergence rate} of the approximate solutions to the underlying problem.

Independently of the smoothness of the initial data $u_0(x)$, due to the presence of nonlinear flux term and a nonlocal term in equation \eqref{eq:stoc_con_laws}, solutions to \eqref{eq:stoc_con_laws} are not necessarily smooth and weak solutions must be sought. Before introducing the concept of weak solutions, we first recall the notion of predictable $\sigma$-field. By a predictable $\sigma$-field on $[0,T]\times \Omega$, denoted
by $\mathcal{P}_T$, we mean that the $\sigma$-field generated by the sets of the form: $ \{0\}\times A$ and $(s,t]\times B$  for any $A \in \mathcal{F}_0; B \in \mathcal{F}_s,\,\, 0<s,t \le T$.
The notion of stochastic weak solution is defined as follows:
\begin{defi} [Stochastic Weak Solution]
\label{defi:stochweaksol}
A square integrable $ L^2(\R)$-valued $\{\mathcal{F}_t: t\geq 0 \}$-predictable stochastic process $u(t)= u(t,x)$ is
called a stochastic weak solution of \eqref{eq:stoc_con_laws} if for all non-negative test functions $\psi \in C_0^{\infty}([0,T) \times \R)$,
\begin{align}
&\int_{\R} \psi(0,x) u_0(x)\,dx  + \int_{\R} \int_{0}^T \Big\{ \partial_t \psi(t,x) u(t,x)
+  f(u(t,x)) \partial_x \psi(t,x) \Big\}\,dx\,dt \notag \\ 
&\quad + \int_{\R} \int_{0}^T  \sigma(u(t,x)) \psi(t,x) \,dW(t) \,dx\,dt + \int_{0}^T \int_{|z|>0}\int_{\R}  \eta( u(t,x);z) \psi(t,x) \,dx\,\widetilde{N}(dz,dt) = 0, \quad \mathbb{P}-\text{a.s}.
\label{3w1}
\end{align}
\end{defi} 
However, it is well known that weak solutions may be discontinuous and they
are not uniquely determined by their initial data. Consequently, an entropy condition must be
imposed to single out the physically correct solution. Since the notion of entropy solution is built 
around the so called entropy-entropy flux pairs, we begin with the definition of  entropy-entropy flux pairs.
\begin{defi}[Entropy-Entropy Flux Pair]
An ordered pair $(\beta,\zeta) $ is called an entropy-entropy flux pair if $ \beta \in C^2(\R) $ with $\beta \ge0$,
	and $\zeta :\R \mapsto \R $ such that
	\begin{align*}
	\zeta'(r) = \beta'(r)f'(r), \,\,\text{for all r}.
	\end{align*}
Moreover, an entropy-entropy flux pair $(\beta,\zeta)$ is called convex if $ \beta^{\prime\prime}(\cdot) \ge 0$.  
\end{defi}
With the help of a convex entropy-entropy flux pair $(\beta,\zeta)$, the notion of stochastic entropy solution is defined as follows:
\begin{defi} [Stochastic Entropy Solution]
\label{defi:stochentropsol}
A square integrable $ L^2(\R)$-valued $\{\mathcal{F}_t: t\geq 0 \}$-predictable stochastic process $u(t)= u(t,x)$ is called a stochastic entropy solution of \eqref{eq:stoc_con_laws} provided
	\begin{itemize}
	 \item[(i)] for each $T>0$,
	 \begin{align}
	  \sup_{0\le t\le T} \E\Big[ \|u(t,\cdot)\|_2^2 \Big] < + \infty.
	 \end{align}
 \item[(ii)] For all test functions $0\leq \psi\in C_{c}^{1,2}([0,\infty )\times\R) $, and each convex entropy pair 
	$(\beta,\zeta) $, 
	\begin{align}
	&  \int_{\R} \psi(0,x) \beta(u_0(x))\,dx + \int_{\Pi_T} \Big\{ \partial_r \psi(r,x) \beta(u(r,x))
	+  \zeta(u(r,x))\partial_x \psi(r,x) \Big\}\,dx\,dr \notag \\ 
	& + \int_{\Pi_T} \sigma(u(r,x))\beta^\prime(u(r,x))\psi(r,x)\,dx\,dW(r)
+ \frac{1}{2} \int_{\Pi_T} \sigma^2(u(r,x))\beta^{\prime\prime}(u(r,x))\psi(r,x)\,dx\,dr  \notag \\
	& + \int_{0}^T\int_{|z|>0}\int_{\R} \Big(\beta\big(u(r,x) +\eta(u(r,x);z)\big)- \beta(u(r,x)\Big)\psi(r,x)\,dx\,\widetilde{N}(dz,dr) \notag \\
	 & +  \int_0^T \int_{|z|>0} \int_{\R} \int_{0}^1 (1-\lambda) 
 \eta^2(u(r,x);z)\beta^{\prime\prime}\big(u(r,x) + \lambda\, \eta(u(r,x);z)\big) \psi(r,x)\,d\lambda\,dx\,m(dz)\,dr 
 \notag \\
	 & \ge 0, \quad \mathbb{P}-\text{a.s.}\notag
	\end{align}
	\end{itemize}
\end{defi} 
Due to nonlocal nature of the  It\^{o}-L\'{e}vy  formula  and the noise-noise interaction, the Definion~\ref{defi:stochentropsol} alone does not give the $L^1$-contraction principle in the sense of average and hence the uniqueness. In fact, classical ``doubling of variable'' technique in time variable does not work when one tries to compare directly two entropy solutions defined in the sense of Definion~\ref{defi:stochentropsol}. To overcome this problem, the authors in \cite{BaVaWit, BisKarlMaj} used a more direct approach
by comparing one entropy solution against the solution of the regularized problem and subsequently sending the regularized parameter to zero, relying on ``weak compactness'' of the regularized approximations.

In order to successfully implement the direct approach, one needs to weaken the 
notion of stochastic entropy solution, and subsequently install the notion of so called generalized entropy solution (cf. \cite{BaVaWit, BisKarlMaj}).
\begin{defi} [Generalized Entropy Solution]
\label{defi: young_stochentropsol}
A square integrable $ L^2\big(\R \times (0,1)\big)$-valued $\{\mathcal{F}_t: t\geq 0 \}$-predictable stochastic process $v(t)= v(t,x,\alpha)$ is
	called a generalized stochastic entropy solution of \eqref{eq:stoc_con_laws} if 
	\begin{itemize}
	 \item[(i)] for each $T>0$,
	 \begin{align}
	  \sup_{0\le t\le T} \E\Big[ \|v(t,\cdot,\cdot)\|_2^2 \Big] < + \infty.
	 \end{align}
	 \item[(ii)] For $0\leq \psi\in C_{c}^{1,2}([0,\infty )\times\R)$, and each convex entropy pair $(\beta,\zeta) $, it holds that
	\begin{align}
	&  \int_{\R} \psi(0,x)\beta(v_0(x))\,dx + \int_{\Pi_T} \int_{0}^1  \Big( \partial_r \psi(r,x)\beta(v(r,x,\alpha)) + 
	\zeta(v(r,x,\alpha))\partial_x \psi(r,x) \Big) \,d\alpha\,dx\,dr \notag \\ 
	&  \quad +  \int_{\Pi_T} \int_{0}^1 \sigma(v(r,x,\alpha)) \beta^{\prime}(v(r,x,\alpha)) \psi(r,x)\,d\alpha \,dW(r)\,dx \notag\\
	&  \qquad + \frac{1}{2}  \int_{\Pi_T} \int_{0}^1 \sigma^2(v(r,x,\alpha)) \beta^{\prime\prime}(v(r,x,\alpha)) \psi(r,x)\,d\alpha \,dr\,dx \notag \\
	&  +  \int_{0}^T\int_{|z|>0}  \int_{\R} \int_{0}^1 \Big( \beta \big(v(r,x,\alpha) +\eta(v(r,x,\alpha);z)\big)- \beta(v(r,x,\alpha))\Big)
 \psi(r,x)\,d\alpha \,\widetilde{N}(dz,dr)\,dx \notag\\
  & +  \int_0^T \int_{|z|>0} \int_{\R} \int_{0}^1 \int_{0}^1 (1-\lambda) 
 \eta^2(v(r,x,\alpha);z)\beta^{\prime\prime}\big(v(r,x,\alpha) + \lambda\, \eta(v(r,x,\alpha);z)\big) \psi(r,x)\,d\alpha \,d\lambda\,dx\,m(dz)\,dr  \notag \\
  & \ge  0,  \quad \mathbb{P}-\text{a.s.} \notag 
\end{align}
\end{itemize}
\end{defi}

\subsection{Numerical Schemes}
Given the nonlinear nature of the stochastic balance laws, explicit solution formulas are hard to obtain. Consequently, robust numerical schemes for approximating such equations are very important in the study of stochastic balance laws. For hyperbolic equations ($\sigma=\eta=0$), the convergence analysis of difference
schemes has a long tradition, we mention only a \emph{few}
references. Finite difference schemes have been studied by Ole\u{i}nik \cite{Oleinik}, Harten \textit{et al}.~\cite{Hartenetal}, Kuznetsov \cite{Kuznetsov}, Crandall and Majda \cite{CrandallMajda} as well as many others. See also \cite{gallouet00}
and references therein.

However, only few papers are available on the study of numerical schemes for stochastic balance laws.
Within the existing literature we can refer to the paper by Holden et al. \cite{risebroholden1997},  
where the authors successfully implemented operator-splitting method to prove the existence of pathwise weak solutions for such Cauchy problems driven by Brownian noise in one space dimension.

Recently, Bauzet in \cite{bauzet2015} generalized the operator-splitting method for the same problem in multi-dimensions. Operator-splitting method for stochastic balance laws driven by multiplicative L\'{e}vy noise was studied by Koley et al. \cite{KoleyMaj}. In \cite{kroker}, Kr\"{o}ker and Rodhe established pathwise convergence of a strongly monotone semi-discrete finite volume solution towards a stochastic entropy solution for one dimensional stochastic balance laws driven by Brownian noise. The main tool was a stochastic version of the compensated compactness approach. It avoids the use of a maximum principle and total-variation estimates but restricts the study to the one-dimensional case and to the use of genuinely nonlinear flux functions. Finally, we mention that Bauzet et al. in \cite{bauzet-fluxsplitting,bauzet-finvolume} established convergence of the \emph{fully discrete} schemes for stochastic balance laws driven by Brownian noise in multi space dimension.

Despite all the above mentioned results, to the best of our knowledge, there does not exist a result giving a \emph{convergence rate} of the approximate solutions to stochastic balance laws. We remark that, for conservation laws with $u_0 \in BV$ and locally Lipschitz $f$, the convergence rate for monotone methods has long been known to be $\sqrt{\Dx}$ \cite{Kuznetsov}, and this is also optimal for discontinuous solutions.
To show the same rate for a finite difference scheme approximating \eqref{eq:stoc_con_laws} seems very difficult, and the main aim of this paper is to precisely address that issue.

\subsection{Scope and Outline of the Paper}
As we mentioned earlier, the aim of this paper is to fill the gap left by the previous authors by introducing 
a \emph{rate of convergence} result for a nonlinear scalar conservation laws forced by a L\'{e}vy noise.
In this paper, drawing preliminary motivation from \cite{BisKoleyMaj,Chen:2012fk}, we intend to prove that the expected value of the $L^1$ difference between the approximate solution and the unique entropy solution converges at a rate $\mathcal{O}(\sqrt{\Delta x})$, $\Dx$ being the spatial discretization parameter.  
Moreover, we also prove the convergence of approximate numerical solutions, generated by the semi-discrete finite difference scheme, to the unique stochastic BV-entropy solution of the underlying equation \eqref{eq:stoc_con_laws}.

The remaining part of this paper is organized as follows. We state the assumptions, detail the technical framework, introduce the semi-discrete finite difference scheme, and state the main result in Section~\ref{sec:pre}. Section~\ref{sec:apriori&convergence} deals with a priori estimates for the approximate solutions. In particular, we list a number of useful properties of the (unique) entropy solution and of the approximate solution. Section~\ref{sec:main-thm} is devoted to the proof of the main theorem, and 
the convergence of approximate solutions to the unique entropy solution of stochastic conservation laws \eqref{eq:stoc_con_laws}, with the help of Young measure theory, is presented in Appendix~\ref{sec:app}.

 \section{Technical Framework and Statement of the Main Result}
 \label{sec:pre}
Throughout this paper, we use the letter $C$ to denote various generic constants. There are situations
where constants may change from line to line, but the notation is kept unchanged so long as it does not
impact the central idea. The Euclidean norm on $\R$ is denoted by $||\cdot||$ and BV semi-norm is by $|\cdot|_{BV(\R)}$. Note that BV$(\R)$ is the set of integrable functions with bounded variation on $\R$ endowed with the norm $|u|_{BV(\R)}= \norm{u}_{1} + TV_{x}(u)$, where $TV_{x}(u)$ is the total variation of $u$ defined on $\R$. 

Moreover, we assume the following set of assumptions:
 \begin{Assumptions}
\item \label{A1} The initial function $u_0(x)$ is a $\mathcal{F}_0$-measurable random variable such that
\begin{align*}
 \E \Big[ ||u_0||_{2} + ||u_0||_{\infty} + |u_0|_{BV(\R)}\Big] < + \infty.
\end{align*}
\item \label{A2}  The flux function $f: \R \mapsto \R$ is a Lipschitz continuous function with $f(0)=0$.
\item \label{A3} There exists a constant  $C>0$  such that, for all $u,v \in \R $,
 \begin{align*}
 	| \sigma(u)-\sigma(v)|  \leq C\, |u-v|.
 \end{align*}
 \item \label{A31}
Moreover, we assume that $\sigma(0)=0$ and there exists $M>0$ such that $\sigma(u)=0$, for all $|u|>M$.
\item \label{A4} The L\'{e}vy measure $m(dz)$ is a Radon measure on $\R\backslash \{0\}$ with a possible singularity at $z=0$, which satisfies
\begin{align*}
    \int_{|z|>0}(1 \wedge |z|^2)\,m(dz) < \infty. 
\end{align*}
\item \label{A5} There exist positive constants $0<\lambda^* <1$ and $ C>0$, such that for all
$ u,v \in \R;~~z\in \R$
 \begin{align*} | \eta(u;z)-\eta(v;z)|  & \leq  \lambda^* |u-v|( |z|\wedge 1), \\
  \text{and}\quad|\eta(u;z)| & \le C(1+|u|)(|z|\wedge 1).
 \end{align*}
 \item \label{A6}
Furthermore, there exists a constant $M >0$ such that $\eta(u;z)=0$, for all $|u|> M$, and we assume that $\eta(0;z)= 0$, for all $z \in \R$. As a consequence, 
\begin{align*}
|\eta(u;z)|  \leq  \lambda^* |u|( |z|\wedge 1),\quad \mbox{and}\quad |u| \leq \frac{1}{1-\lambda^*} |u + \alpha \eta(u;z) |,\ \forall \alpha\in[0,1].
\end{align*}
\end{Assumptions}

\begin{rem}
Note that, in view of \cite{BaVaWit, BisKarlMaj}, for the existence and uniqueness of stochastic entropy solution of \eqref{eq:stoc_con_laws}, 
we only need initial data to be in $L^2(\R)$ along with the assumptions \ref{A2}, \ref{A3}, \ref{A4}, and \ref{A5}. 
In fact, the assumption ~\ref{A4} is necessary to handle the nonlocal term present in the equation.
Furthermore, the assumption $u_0 \in BV(\R)$ is necessary 
to make sure that the solution has finite variation.
Finally, we mention that the assumption \ref{A31}, and \ref{A6} are necessary 
conditions to maintain the boundedness of the solution. 
\end{rem}

\subsection{Finite Difference Scheme} 
We begin by introducing some notation needed to define the
semi-discrete finite difference scheme. Throughout this paper, we
reserve $\Dx$ to denote a small positive number that represents the
spatial discretization parameter of the numerical scheme. Given
$\Dx>0$, we set $x_j=j\Dx$ for $j\in \Z$, to denote the spatial mesh points. 
Moreover, for any function $u =u(x,t)$ admitting point values, we write $u_j(t) = u(x_j, t)$. 
Furthermore, let us introduce the spatial grid cells
\begin{align*}
  I_j = [x_{j-1/2}, x_{j+1/2}),
\end{align*}
where $x_{j\pm1/2} = x_j \pm \Dx/2$. Let $D_{\pm}$ denote the discrete
forward and backward differences in space, i.e.,
\begin{equation*}
  D_{\pm}u_j = \pm \frac{u_{j\pm 1} - u_j}{\Dx}.
\end{equation*}
The discrete summation-by-parts formula is given by
\begin{align*}
\sum \limits_{j\in \Z} u_j D_{\pm} v_j = -  \sum \limits_{j\in \Z} v_j D_{\mp} u_j.
\end{align*}

We propose the following semi-discrete (in time) finite difference scheme 
approximating the solutions generated by the equation \eqref{eq:stoc_con_laws}

\begin{align}
  du_j(t) + \frac{1}{\Delta x} \Big(F_{j+\frac{1}{2}}(t)-F_{j-\frac{1}{2}}(t) \Big)\,dt & = \sigma(u_j(t))\,dW(t)  + \int_{|z|>0} \eta(u_j(t);z)\widetilde{N}(dz,dt),\,\,t>0, \, j \in\Z,
  \label{eq:levy_stochconservation_ discrete_laws_1}  \\
  u_j(0) &=\frac{1}{\Delta x}\int_{x_{j-\frac{1}{2}}}^{x_{j+\frac{1}{2}}} u_0(x)\,dx, \, j \in\Z, \label{eq:scheme_initial}
  \end{align}
where $F_{j+\frac{1}{2}}(t):= F\big(u_j(t),u_{j+1}(t)\big)$ is the Engquist-Osher (EO) flux. More precisely, for any given flux function $f$, the generalized upwind scheme of Engquist and Osher is defined by 
\begin{align*}
F\big(u_j,u_{j+1} \big) = f^{+}(u_j) + f^{-}(u_{j+1}),
\end{align*}
where
\begin{align*}
f^{+}(u) = f(0) + \int_0^u \max(f'(s),0) \,ds, \quad f^{-}(u) = \int_0^u \min(f'(s),0) \,ds. 
\end{align*}
With this in mind, we can recast our scheme \eqref{eq:levy_stochconservation_ discrete_laws_1} as
\begin{align*}
  du_j(t) + \Big(D_{-}f^{+}(u_j(t)) + D_{+}f^{-}(u_j(t)) \Big)\,dt & = \sigma(u_j(t))\,dW(t)  + \int_{|z|>0} \eta(u_j(t);z)\widetilde{N}(dz,dt),\,\,t>0, \, j \in\Z, \notag  \\
  u_j(0) &=\frac{1}{\Delta x}\int_{x_{j-\frac{1}{2}}}^{x_{j+\frac{1}{2}}} u_0(x)\,dx, \, j \in\Z.  \label{eq:levy_stochconservation_ discrete_laws} 
  \end{align*}
  
\begin{rem}
\label{rem:EO}
We have chosen to analyse the scheme \eqref{eq:levy_stochconservation_ discrete_laws_1}-\eqref{eq:scheme_initial} with EO flux because of its apparent simplicity. One can, however, adopt the method of proof developed in this paper and obtain similar results for other schemes (e.g., all monotone schemes).
\end{rem}

For a given initial data $u_0$, we define the initial grid function $\{u_j(0)\}_{j\in\Z}$ by \eqref{eq:scheme_initial}. Moreover, for the sequence $\seq{u_j(t)}_{j\in \Z}$, we associate the function $u_\Dx$
defined by
\begin{equation}
\label{eq:app-solutions_1}
\begin{aligned}
u_{\Dx}(t,x)= \sum_{j \in \Z} u_j(t) \, \mathds{1}_{I_j}(x),
\end{aligned}
\end{equation}
where $\mathds{1}_{A}$ denotes the characteristic function of the set
$A$. 

Throughout this paper, we use the notation $u_{\Dx}$ to
denote the functions associated with the sequence $\seq{u_j(t)}_{j\in\Z}$. For later use, recall that the discrete
$\ell^{\infty}(\R)$, $\ell^1(\R)$ and $\ell^p(\R)$ $(1<p<\infty)$ norms, and BV semi-norm for a lattice function $u_{\Dx}$ are defined
respectively as
\begin{equation*}
  \begin{aligned}
    & \norm{u_{\Dx}(\cdot,t)}_{\infty} = \sup_{j \in \Z} \abs{u_j(t)}, \quad
     \norm{u_\Dx(\cdot, t)}_{1} = \Dx \sum_{j\in\Z} \abs{u_j(t)}, \\
    & \norm{u_\Dx(\cdot, t)}_{p} = \mysqrt{-2}{6}{p}{\Dx \sum_{j\in\Z} \abs{u_j(t)}^p}, \quad
    \abs{u_\Dx(\cdot, t)}_{BV} = \sum_{j\in\Z} \abs{u_{j+1}(t)- u_j(t)}.
  \end{aligned}
\end{equation*}


Now we are in a position to state the main result of this article.
\begin{maintheorem*}
  	Let the assumptions \ref{A1}-\ref{A6} hold and $u_{\Delta x}(t,x)$ be finite difference approximations generated by the scheme \eqref{eq:levy_stochconservation_ discrete_laws_1}. Then there exist an unique BV entropy solution $u(t,x)$ to the problem \eqref{eq:stoc_con_laws} and a constant $C=C(|u_0|_{BV(\R)})$, independent of the small parameter 
  	$\Delta x$, such that  for a.e. $t\in (0,T]$,
\begin{align*}
  	\E \Big[\int_{\R}\big|u_{\Delta x}(t,x)-u(t,x)\big|\,dx\Big] \le C \sqrt{\Delta x},
\end{align*}
provided the initial error is bounded by 
  \begin{align}
  \E \Big[ \| u_{\Delta x}(0,\cdot)-u_0(\cdot)\|_{L^1(\R)}\Big] \le C \sqrt{\Delta x}. \label{error:initial-data}
  \end{align}
\end{maintheorem*}


\section{A Priori Estimates}
\label{sec:apriori&convergence}
This section is devoted to the derivation of \emph{a priori} estimates for the approximate solutions $u_{\Delta x}(t,x)$ under the assumptions \ref{A1}-\ref{A6}, which turns out to be useful to prove convergence, as well as rate of convergence, of the approximate solutions to the unique entropy solution of \eqref{eq:stoc_con_laws}.

\begin{rem}  
To keep the presentation fairly short, we shall only present proof of \textit{a priori} estimates (and Main Theorem) for $f' \le 0$. 
Observe that, in that case $F_{j+1/2} = f(u_{j+1})$. However, we note that the same proofs apply \emph{mutatis mutandis} also in the general case, since we can write $F_{j+1/2} = f^{+}(u_{j}) + f^{-}(u_{j+1}) $, where $(f^{+})' \ge 0$, $(f^{-})' \le 0$, and both are Lipschitz continuous. In fact, it is clear that $\norm{f'}_{\infty}$ serves as a Lipschitz constant for Engquist-Osher flux. 
\end{rem}
  
\subsection{Uniform Moment Estimates} 
As we mentioned earlier, to ensure the convergence of approximate solutions, one needs to obtain uniform moment estimates for the approximate solutions. We remark that, we are following the same strategy as depicted in \cite{kroker} (for the Brownian noise set-up) to obtain such estimates. In what follows, we start with the following simple but useful lemma which is essentially a discrete version of the entropy inequality.

\begin{lem}
\label{lem:cellentropyinequality_finitevolumescheme}
Let $(\beta,\zeta)$ be a convex entropy-entropy flux pair. Let $u_j(t)$ be the approximate solution generated by the finite volume scheme \eqref{eq:levy_stochconservation_ discrete_laws_1}. Then $u_j(t)$ satisfies the following cell entropy inequality
\begin{align*}
 d\beta(u_j(t)) &+  \frac{1}{{\Delta x}} \Big(\zeta(u_{j+1}(t))- \zeta(u_{j}(t)) \Big)\,dt \notag \\
 &\le  \sigma(u_j(t))\beta^\prime(u_j(t))\,dW(t) + \frac{1}{2} \sigma^2(u_j(t))\beta^{\prime\prime}(u_j(t))\,dt \notag \\
 & \quad  + \int_{|z|>0} \Big\{\beta\Big(u_j(t)+ \eta(u_j(t);z)\Big)-\beta(u_j(t))\Big\}\, \widetilde{N}(dz,dt) \notag \\
 & \qquad + \int_{|z|>0}\int_{\lambda=0}^1 (1-\lambda) \, \eta^2(u_j(t);z) \, \beta^{\prime\prime} \Big(u_j(t)+\lambda\,\eta(u_j(t);z)\Big)\,d\lambda\,m(dz) \, dt,
\end{align*}
for all $j\in \mathbb{Z}$ and almost all $\omega \in \Omega$.
\end{lem}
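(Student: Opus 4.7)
\medskip

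\noindent\textbf{Proof plan.} The plan is to apply the It\^o--L\'evy formula to $\beta(u_j(t))$ coordinatewise (each $u_j(t)$ is a one-dimensional semimartingale driven by $W$ and $\widetilde{N}$) and then replace the ``spatial drift'' $\beta'(u_j)(F_{j+1/2}-F_{j-1/2})$ by the discrete entropy flux difference $\zeta(u_{j+1})-\zeta(u_j)$ using the monotonicity of the Engquist--Osher flux. Since the remark preceding the lemma allows us to reduce to the case $f'\le 0$, we have $F_{j+1/2}=f(u_{j+1})$, and the monotonicity step reduces to a simple convexity argument.

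First, I would apply the It\^o--L\'evy formula to $\beta(u_j(\cdot))$. Using the scheme \eqref{eq:levy_stochconservation_ discrete_laws_1} as the semimartingale decomposition of $u_j$, this gives
\begin{align*}
 d\beta(u_j(t)) &= -\frac{\beta'(u_j(t))}{\Delta x}\bigl(F_{j+1/2}(t)-F_{j-1/2}(t)\bigr)\,dt
   + \sigma(u_j(t))\beta'(u_j(t))\,dW(t)
   + \tfrac{1}{2}\sigma^2(u_j(t))\beta''(u_j(t))\,dt\\
 &\quad + \int_{|z|>0}\!\Bigl[\beta\bigl(u_j(t^-)+\eta(u_j(t^-);z)\bigr)-\beta(u_j(t^-))\Bigr]\widetilde{N}(dz,dt)\\
 &\quad + \int_{|z|>0}\!\Bigl[\beta\bigl(u_j(t)+\eta(u_j(t);z)\bigr)-\beta(u_j(t))-\beta'(u_j(t))\eta(u_j(t);z)\Bigr]m(dz)\,dt.
\end{align*}
For the last compensator-correction term, I would expand the integrand by the second-order Taylor formula with integral remainder,
\[
 \beta(u+\eta)-\beta(u)-\beta'(u)\eta \;=\; \eta^2\!\int_{0}^{1}(1-\lambda)\beta''(u+\lambda\eta)\,d\lambda,
\]
which produces exactly the last term displayed in the lemma.

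Next, I would handle the deterministic flux term. In the reduced regime $f'\le 0$ the pointwise identity $F_{j+1/2}-F_{j-1/2}=f(u_{j+1})-f(u_j)$ holds, so it suffices to verify
\[
 \beta'(u_j)\bigl(f(u_{j+1})-f(u_j)\bigr)\;\ge\;\zeta(u_{j+1})-\zeta(u_j).
\]
Using $\zeta'=\beta'f'$ and writing both sides as integrals from $u_j$ to $u_{j+1}$, this is equivalent to
\[
 \int_{u_j}^{u_{j+1}}\!\bigl(\beta'(u_j)-\beta'(s)\bigr)f'(s)\,ds\;\ge\;0,
\]
which follows from $f'\le0$ and the convexity of $\beta$ by a short sign discussion on the two cases $u_{j+1}\gtrless u_j$. (For general $f$, the same conclusion holds by the standard monotone-scheme argument applied separately to $f^+$ and $f^-$; this is the content of the remark on the EO flux.) Dividing by $\Delta x$ and subtracting from the It\^o--L\'evy expansion above yields the desired cell entropy inequality.

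The bookkeeping is routine; the only point that requires a little care is the discrete entropy inequality for the flux term, since this is where the monotonicity of the EO flux is essential. Everything else is a direct application of the jump-diffusion chain rule to the scalar SDE satisfied by each $u_j(t)$, so no delicate estimates are needed to prove the lemma itself.
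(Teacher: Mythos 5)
Your proposal is correct and follows essentially the same route as the paper: apply the It\^o--L\'evy formula to $\beta(u_j(t))$ (with the compensator term rewritten via the second-order Taylor remainder) and then control the flux term by the discrete entropy flux difference using $f'\le 0$ and the convexity of $\beta$. Your integral inequality $\int_{u_j}^{u_{j+1}}(\beta'(u_j)-\beta'(s))f'(s)\,ds\ge 0$ is exactly the paper's quantity $\Phi_j(u_{j+1})$ written out by the fundamental theorem of calculus, so the sign discussion is the same argument in a slightly different dress.
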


\begin{proof}
 Let $(\beta,\zeta)$ be a given convex entropy-entropy flux pair. 
 A simple application of It\^{o}-L\'{e}vy formula applied to $\beta(u_j(t))$, where
$u_j(t)$ satisfies the semi-discrete finite volume scheme \eqref{eq:levy_stochconservation_ discrete_laws_1}, leads to
 \begin{align}
   d\beta(u_j(t)) &+ \frac{1}{{\Delta x}} \beta^\prime(u_j(t)) \Big( f(u_{j+1}(t))-f(u_{j}(t)) \Big)\,dt \notag \\
   &\quad = \sigma(u_j(t))\beta^\prime(u_j(t))\,dW(t) + \frac{1}{2} \sigma^2(u_j(t))\beta^{\prime\prime}(u_j(t))\,dt \notag \\
  & \qquad +\int_{|z|>0} \Big\{\beta \Big(u_j(t)+ \eta(u_j(t);z)\Big)-\beta(u_j(t))\Big\}\, \widetilde{N}(dz,dt) \notag \\
 & \quad \qquad + \int_{|z|>0}\int_{\lambda=0}^1 (1-\lambda)\, \eta^2(u_j(t);z) \, \beta^{\prime\prime}\Big(u_j(t)+\lambda\,\eta(u_j(t);z)\Big)\,d\lambda\,
 m(dz)\,dt.
 \label{ito-discreate}
 \end{align} 
Next, observe that
 \begin{align}
  \beta^\prime(u_j(t)) & \Big( f(u_{j+1}(t)) - f(u_{j}(t)) \Big)  \notag \\
  &= \Big( \beta^\prime(u_j(t)) f(u_{j+1}(t))-\beta^\prime(u_j(t)) f(u_j(t))
  -\zeta(u_{j+1}(t)) + \zeta(u_j(t)) \Big) + \Big( \zeta(u_{j+1}(t)) -\zeta(u_{j}(t)) \Big) \notag \\
 & := \Phi_j(u_{j+1}(t)) +  \Big( \zeta(u_{j+1}(t)) -\zeta(u_{j}(t))\Big)
  \label{expressionfor numericalflux}
 \end{align} 
where $\Phi_j$ is defined as follows
 \begin{align*}
 \Phi_j(w)= \beta^\prime(u_j(t)) f(w)-\beta^\prime(u_j(t)) f(u_j(t))
  -\zeta(w) + \zeta(u_j(t))
  \end{align*}
A straightforward calculation reveals that
\begin{align*}
 \frac{d}{dw}\Phi_j(w)&= \beta^\prime(u_j(t)) f'(w)- \zeta'(w)
   = \beta^\prime(u_j(t)) f'(w) -  \beta'(w) f'(w)\\
  &=- \beta^{\prime\prime}(\xi_2)(w-u_j(t)) f'(w),\quad \xi_2 \in (w,u_j(t)).
\end{align*} 
Since $\beta$ is convex function and $f' \le 0$, we have
\begin{align*}
 \frac{d}{dw}\Phi_j(w) =
\begin{cases} \ge 0 \quad \text{if} ~ u_j(t)\le w,\\
              \le 0 \quad \text{if} ~u_j(t)\ge w.
\end{cases}
\end{align*} 
Also, $\Phi_j(u_j(t))=0$. So, $\Phi_j(w) \ge 0 $, for all $w\in \R$. Hence, we conclude that
\begin{align}
 \Phi_j(u_{j+1}(t))\ge 0.\label{errorfornumericelflux}
\end{align}
Finally, we combine \eqref{expressionfor numericalflux} and \eqref{errorfornumericelflux} in \eqref{ito-discreate} to conclude 
\begin{align*}
 d\beta(u_j(t)) & + \frac{1}{{\Delta x}} \Big( \zeta(u_{j+1}(t)) - \zeta(u_{j}(t)) \Big)\,dt \\
 &\le  \sigma(u_j(t))\beta^\prime(u_j(t))\,dW(t) + \frac{1}{2} \sigma^2(u_j(t))\beta^{\prime\prime}(u_j(t))\,dt \notag \\
  &  \quad +\int_{|z|>0} \Big\{\beta\Big(u_j(t)+ \eta(u_j(t);z)\Big)-\beta(u_j(t))\Big\}\,\widetilde{N}(dz,dt)\\
 & \qquad + \int_{|z|>0}\int_{\lambda=0}^1 (1-\lambda) \, \eta^2(u_j(t);z) \, \beta^{\prime\prime}\Big(u_j(t)+\lambda\,\eta(u_j(t);z)\Big)\,d\lambda\,m(dz)\,dt.
\end{align*}
This completes the proof of the Lemma.
\end{proof}

Now we are ready to prove uniform moment estimates. To that context, we have the following lemma:

 \begin{lem}
Let the assumptions \ref{A1}-\ref{A6} be true.
Let $u_{\Delta x}(t,x)$ be the approximate solutions generated by the finite difference scheme \eqref{eq:levy_stochconservation_ discrete_laws_1}. Then the approximate solutions 
$u_{\Delta x}(t,x)$ satisfies the following uniform moment estimates
\begin{align}
 \sup_{{\Delta x} >0}\sup_{0\le t \le T} \E \Big[||u_{\Delta x}(t,\cdot)||_p^p\Big] < +\infty, \,\, \text{for} ~~ p\in \mathbb{N},~ p\ge 1.\label{uni:moment}
\end{align}
\end{lem}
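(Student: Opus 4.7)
The strategy is to apply the cell entropy inequality of Lemma~\ref{lem:cellentropyinequality_finitevolumescheme} with the convex entropy $\beta(u)=|u|^p$, sum the resulting inequalities over $j\in\Z$ (multiplied by $\Delta x$), take expectation, and close a Gronwall-type differential inequality. I would first treat the case of integer $p\ge 2$, for which $\beta(u)=|u|^p\in C^2(\R)$ with $\beta''(u)=p(p-1)|u|^{p-2}\ge 0$, and handle $p=1$ separately by a smooth approximation such as $\beta_\delta(u)=\sqrt{u^2+\delta^2}-\delta$, passing to the limit $\delta\to 0$ by monotone convergence.

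After applying Lemma~\ref{lem:cellentropyinequality_finitevolumescheme} to each $j$, multiplying by $\Delta x$ and summing over $\Z$, the entropy flux difference $\zeta(u_{j+1}(t))-\zeta(u_j(t))$ telescopes and disappears (absolute convergence being guaranteed by the a~priori finiteness of the discrete $\ell^p$-norm, verified along the stopping-time localization of the next step). Integrating in time up to $t$, taking expectation, and killing the Brownian and compensated Poisson martingale parts by a standard stopping-time argument followed by Fatou / monotone convergence, one arrives at
\begin{align*}
\E\|u_{\Delta x}(t,\cdot)\|_p^p
&\le \E\|u_{\Delta x}(0,\cdot)\|_p^p
+ \tfrac{p(p-1)}{2}\int_0^t \E\Big[\Delta x\sum_{j\in\Z}\sigma^2(u_j(s))\,|u_j(s)|^{p-2}\Big]\,ds\\
&\quad+ p(p-1)\int_0^t \E\Big[\Delta x\sum_{j\in\Z}\int_{|z|>0}\int_0^1(1-\lambda)\,\eta^2(u_j(s);z)\,|u_j(s)+\lambda\eta(u_j(s);z)|^{p-2}\,d\lambda\,m(dz)\Big]\,ds.
\end{align*}

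The two integrands on the right can now be dominated by $|u_j|^p$: by \ref{A3} and $\sigma(0)=0$ we have $\sigma^2(u)|u|^{p-2}\le C^2|u|^p$; and from \ref{A5}--\ref{A6} one has the two-sided estimate $(1-\lambda^*)|u|\le |u+\lambda\eta(u;z)|\le (1+\lambda^*)|u|$ for $\lambda\in[0,1]$, so that $|u+\lambda\eta(u;z)|^{p-2}\le(1+\lambda^*)^{p-2}|u|^{p-2}$, hence $\eta^2(u;z)|u+\lambda\eta|^{p-2}\le C_p\,(|z|\wedge 1)^2\,|u|^p$. Combined with the Lévy-integrability $\int_{|z|>0}(|z|\wedge 1)^2\,m(dz)<\infty$ from \ref{A4}, this yields
\begin{equation*}
\E\|u_{\Delta x}(t,\cdot)\|_p^p \le \E\|u_{\Delta x}(0,\cdot)\|_p^p + C_p\int_0^t \E\|u_{\Delta x}(s,\cdot)\|_p^p\,ds,
\end{equation*}
and Gronwall's lemma closes the estimate uniformly in $\Delta x$. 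For the initial datum, Jensen's inequality applied to the cell averages \eqref{eq:scheme_initial} gives $\Delta x\sum_j|u_j(0)|^p\le\|u_0\|_p^p\le\|u_0\|_\infty^{p-2}\|u_0\|_2^2$, whose expectation is finite by \ref{A1} (after the standard interpolation, using that \ref{A1} controls joint moments through $\|u_0\|_\infty$ and $\|u_0\|_2$).

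The step I expect to be most delicate is the control of the nonlocal remainder in the It\^o--L\'evy correction: the inner integral involves $\beta''(u+\lambda\eta)$, which for $p\ge 2$ could in principle amplify relative to $\beta''(u)$. The upper bound $|u+\lambda\eta|\le(1+\lambda^*)|u|$ is exactly what keeps this remainder at the same homogeneity as $|u|^p$; without the precise form of \ref{A6} the Gronwall loop would not close. A secondary technical issue is the rigorous truncation of the infinite sum $\sum_{j\in\Z}$ against the martingale integrals, which requires either a finite-$N$ Galerkin truncation with a stopping time $\tau_N=\inf\{t:\sup_{|j|\le N}|u_j(t)|>N\}$ and a subsequent passage $N\to\infty$, or an $a$~$priori$ justification that the discrete $\ell^p$-norm is locally bounded $\mathbb{P}$-a.s. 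For $p=1$, the additional work lies in verifying that the bounds of Steps above remain uniform in $\delta$ when $\beta=\beta_\delta$, which is immediate since $|\beta_\delta''(u)|\le 1/\delta$ can be replaced in our estimates by $u^2\beta_\delta''(u)\le|u|$ and $\eta^2\beta_\delta''(u+\lambda\eta)$ enjoys the same pointwise bound after \ref{A6}.
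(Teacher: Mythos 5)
Your proposal is correct and follows essentially the same route as the paper: the cell entropy inequality of Lemma~\ref{lem:cellentropyinequality_finitevolumescheme} with $\beta(u)=|u|^p$ for $p\ge 2$, summation over $j$ with the flux term telescoping, vanishing expectation of the martingale terms, domination of the It\^{o}--L\'{e}vy correction terms by $C|u_j|^p(1\wedge|z|^2)$ via \ref{A3}--\ref{A6}, Gronwall, Jensen for the initial data, and a regularized absolute value for $p=1$. The only cosmetic difference is that you bound $|u+\lambda\eta|^{p-2}\le(1+\lambda^*)^{p-2}|u|^{p-2}$ directly, while the paper splits it as $C(|u|^{p-2}+|\eta|^{p-2})$ before invoking $|\eta(u;z)|\le\lambda^*|u|(|z|\wedge 1)$; both close the same Gronwall loop.
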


\begin{proof}
To prove the result for $p\ge 2$, we assume that $(\beta,\zeta)$ be a given convex entropy-entropy flux pair. Then, by the virtue of Lemma \ref{lem:cellentropyinequality_finitevolumescheme}, we conclude
 \begin{align}
 \label{eq:test1}
\beta(u_j(t)) &- \beta(u_j(0))  + \frac{1}{{\Delta x}} \int_0^t \Big( \zeta(u_{j+1}(s)) -\zeta(u_{j}(s)) \Big)\,ds \notag \\
\le & \int_0^t \sigma(u_j(s))\beta^\prime(u_j(s))\,dW(s) + \frac{1}{2} \int_0^t \sigma^2(u_j(s))\beta^{\prime\prime}(u_j(s))\,ds \notag \\
 & \quad +  \int_0^t \int_{|z|>0} \Big\{\beta\big(u_j(s)+ \eta(u_j(s);z)\big)-\beta(u_j(s))\Big\} \,\widetilde{N}(dz,ds) \notag \\
 &  \qquad +  \int_0^t\int_{|z|>0}\int_{\lambda=0}^1 (1-\lambda)\, \eta^2(u_j(s);z) \, \beta^{\prime\prime}\big(u_j(s)+\lambda\,\eta(u_j(s);z)\big)\,d\lambda\,
 m(dz)\,ds.
\end{align} 
Next, we multiply the above inequality \eqref{eq:test1} by $\Delta x$, sum over all $j \in \Z$, and then take expectation. Note that, an application of It\^{o}-L\'{e}vy integral reveals that
\begin{align*}
& \E\Big[\Delta x \sum_{j \in \Z} \int_0^t \sigma(u_j(s))\beta^\prime(u_j(s))\,dW(s)\Big] = 0\\
& \E\Big[\Delta x \sum_{j \in \Z} \int_0^t \int_{|z|>0} \Big\{\beta\big(u_j(s)+ \eta(u_j(s);z)\big)-\beta(u_j(s))\Big\} \,\widetilde{N}(dz,ds)\Big] =0.
\end{align*}
Hence, we are left with
\begin{align}
  \E\Big[ \Delta x & \sum_{j\in \mathbb{Z}}  \big(\beta(u_j(t))- \beta(u_j(0))\big) \Big] \le \frac{1}{2} 
  \E\Big[ \Delta x\sum_{j\in \mathbb{Z}}  \int_0^t \sigma^2(u_j(s))\beta^{\prime\prime}(u_j(s))\,ds \Big] \notag \\
& +  \E\Big[ \Delta x\sum_{j\in \mathbb{Z}} \int_0^t\int_{|z|>0}\int_{0}^1 (1-\lambda) \eta^2(u_j(s);z)\beta^{\prime\prime}\big(u_j(s)+\lambda\,\eta(u_j(s);z)\big)\,d\lambda\, \label{eq:test2}
 m(dz)\,ds\Big].
\end{align} 
Let $p \ge 2$. Choosing $\beta(u)=\frac{|u|^p}{p}$ and using assumptions \ref{A1}-\ref{A6}, we obtain
\begin{align*}
  \E\Big[ \Dx &\sum_{j\in \mathbb{Z}} |u_j(t)|^p \Big] -  \E\Big[ \Dx \sum_{j\in \mathbb{Z}} |u_j(0)|^p \Big] \notag \\
 &\le  C(p)\, \E\Big[\Dx \sum_{j\in \mathbb{Z}}  \int_0^t\int_{|z|>0}\int_{0}^1 (1-\lambda) 
 \eta^2(u_j(s);z) |u_j(s)+\lambda\,\eta(u_j(s);z)|^{p-2}\,d\lambda\,m(dz)\,ds\Big] \notag \\
 & \hspace{3cm} + C(p)\, \E\Big[ \Dx \sum_{j\in \mathbb{Z}}  \int_0^t \sigma^2(u_j(s))|u_j(s)|^{p-2}\,ds\Big] \notag \\
 &\le  C(p)\, \E\Big[ \Dx \sum_{j\in \mathbb{Z}}  \int_0^t\int_{|z|>0}  \eta^2(u_j(s);z)
 \Big( |u_j(s)|^{p-2}+ |\eta(u_j(s);z)|^{p-2}) \Big)\,m(dz)\,ds\Big] \notag \\
  & \hspace{3cm} + C(p)\, \E\Big[ \Dx \sum_{j\in \mathbb{Z}}  \int_0^t \sigma^2(u_j(s))|u_j(s)|^{p-2}\,ds\Big] \notag \\
  &\le  C(p)\, \E\Big[ \Dx \sum_{j\in \mathbb{Z}}  \int_0^t \int_{|z|>0} |u_j(s)|^{p}(1\wedge |z|^2)\,m(dz)\,ds \Big] + C(p)\,
  \E\Big[ \Dx \sum_{j\in \mathbb{Z}}  \int_0^t |u_j(s)|^{p}\,ds \Big] \notag \\
 & \le  C(p)\,\int_0^t \E\Big[ \Dx \sum_{j\in \mathbb{Z}}|u_j(s)|^{p} \Big]\,ds  
\end{align*}
Hence, by Gronwall's inequality, for any $t>0$,
\begin{align*}
  \E\Big[ \Dx \sum_{j\in \mathbb{Z}} |u_j(t)|^p \Big] \le C\, \E\Big[ \Dx \sum_{j\in \mathbb{Z}} |u_j(0)|^p \Big].  
\end{align*}
By Jensen's inequality, we get 
$$  \E\Big[{\Delta x} \sum_{j\in \mathbb{Z}} |u_j(0)|^p \Big]\le \E\Big[||u_0(\cdot)||_p^p \Big].$$
Again, notice that
 \begin{align*}
\E\Big[||u_{\Delta x}(t,\cdot)||_p^p\Big]&= \E\Big[\int_{\R}|u_{\Delta x}(t,\cdot)|^p\,dx \Big] = \E\Big[ \sum_{j\in \mathbb{Z}}  \int_{x_{j-\frac{1}{2}}}^{x_{j+\frac{1}{2}}} |u_{\Delta x}(t,\cdot)|^p\,dx \Big]
= \E\Big[  {\Delta x}\,\sum_{j\in \mathbb{Z}} |u_j(t)|^p \Big].
\end{align*}
Thus, we conclude that
\begin{align*}
 \sup_{{\Delta x}>0}\sup_{0\le t\le T}\E\Big[||u_{\Delta x}(t,\cdot)||_p^p \Big] <  \infty,
\end{align*} 
for any $p \ge 2$. 
Choosing now $\beta= \beta_\delta$, a classical convex and regular approximation of the absolute-value function, one obtains from \eqref{eq:test2} 
\begin{align*}
  \E\Big[ \Delta x & \sum_{j\in \mathbb{Z}}  \big(\beta_\delta(u_j(t))- \beta_\delta(u_j(0))\big) \Big] \le \frac{1}{2} 
  \E\Big[ \Delta x\sum_{j\in \mathbb{Z}}  \int_0^t \sigma^2(u_j(s))1_{|u_j(s)|\leq\delta}\,ds \Big] \notag \\
& +  \E\Big[ \Delta x\sum_{j\in \mathbb{Z}} \int_0^t\int_{|z|>0}\int_{0}^1 (1-\lambda) \eta^2(u_j(s);z)1_{|u_j(s)+\lambda\,\eta(u_j(s);z)|\leq \delta}\,d\lambda\,
 m(dz)\,ds\Big].
\end{align*}
Then, by \ref{A3}, \ref{A31} and \ref{A6}, passing to the limit over $\delta$ to $0$ yields $\E\Big[ \Delta x  \sum_{j\in \mathbb{Z}}  \big(|u_j(t)|- |u_j(0)|\big) \Big] \le 0$.
Moreover, one can use interpolation theorem to conclude the assertion of the lemma for any $p \ge 1$.

This finishes the proof of the lemma.

\end{proof}  
  

\subsection{Spatial Bounded Variation} 
Like its deterministic counterpart, we derive spatial BV bound for the approximate solutions under the assumptions \ref{A1}-\ref{A6}. First note that, monotone numerical scheme for deterministic balance laws implies its $L^{\infty}$ stability. But for the stochastic case, this is not true in general. However, under the additional assumptions \ref{A31} and \ref{A6}, it is possible to derive such estimates. Regarding that, we have the following lemma.

\begin{lem}
 \label{imp}
Let the assumptions \ref{A1}-\ref{A6} be true. Let $u_{\Delta x}(t,x)$ be the finite volume approximations prescribed by the scheme \eqref{eq:levy_stochconservation_ discrete_laws_1}. Then
$\|u_{\Delta x}(\omega, t,\cdot)\|_{L^\infty(\R)}\le \max\Big\{ 2M, \|u_0\|_{L^\infty(\R)}\Big\}$, 
for any $t$ and $\omega$ a.s.,  where $M$ is defined as in the assumptions \ref{A31} and \ref{A6}. 
\end{lem}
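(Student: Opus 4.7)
The plan is to apply the cell entropy inequality of Lemma \ref{lem:cellentropyinequality_finitevolumescheme} to a carefully chosen \emph{one-sided} convex entropy and then exploit \ref{A31} and \ref{A6} to force every stochastic contribution to vanish pathwise. Set $K:=\max\{2M,\|u_0\|_{L^\infty(\R)}\}$, which is $\mathcal{F}_0$-measurable and finite a.s.\ by \ref{A1}, and take $\beta_K(u):=((u-K)^+)^3$ with associated entropy flux $\zeta_K$, normalised by $\zeta_K(K)=0$. Then $\beta_K\in C^2(\R)$ and $\beta_K,\beta_K',\beta_K''$ all vanish on $(-\infty,K]$ while being strictly positive on $(K,\infty)$; consequently $\zeta_K\equiv 0$ on $(-\infty,K]$ as well.

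The crucial observation is that with this choice every noise-type term in Lemma \ref{lem:cellentropyinequality_finitevolumescheme} is identically zero, pathwise. If $|u_j(s)|>M$ then $\sigma(u_j(s))=0$ by \ref{A31} and $\eta(u_j(s);z)=0$ by \ref{A6}, so the Brownian martingale, its It\^{o} correction, the compensated Poisson martingale and the nonlocal It\^{o} correction all vanish. If instead $|u_j(s)|\le M$, then $u_j(s)<K$ forces $\beta_K'(u_j(s))=\beta_K''(u_j(s))=0$, killing the Brownian contributions; moreover \ref{A6} yields
\[
|u_j(s)+\lambda\,\eta(u_j(s);z)| \;\le\; (1+\lambda^*)M \;<\; 2M \;\le\; K \qquad \text{for every } \lambda\in[0,1],
\]
which kills both $\beta_K(u_j(s)+\eta)-\beta_K(u_j(s))$ and $\beta_K''(u_j(s)+\lambda\eta)$. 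Hence Lemma \ref{lem:cellentropyinequality_finitevolumescheme}, integrated on $[0,t]$, reduces pathwise to
\[
\beta_K(u_j(t))+\frac{1}{\Delta x}\int_0^t \bigl(\zeta_K(u_{j+1}(s))-\zeta_K(u_j(s))\bigr)\,ds \;\le\; \beta_K(u_j(0)).
\]

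I would then multiply by $\Delta x$, sum over $j\in\Z$ and use the telescoping of the discrete flux. Since $|u_j(0)|\le\|u_0\|_{L^\infty}\le K$ one has $\beta_K(u_j(0))=0$ for every $j$, so the conclusion is $\sum_{j\in\Z}\Delta x\,\beta_K(u_j(t))\le 0$; combined with $\beta_K\ge 0$ and $\beta_K>0$ on $(K,\infty)$, this forces $u_j(t)\le K$ for every $j$, pathwise. Running the symmetric argument with the mirrored entropy $\widetilde\beta_K(u):=((-u-K)^+)^3$ yields $u_j(t)\ge -K$, and the two bounds combined give the claim. The step I expect to require the most care is the pathwise telescoping at spatial infinity: the $\ell^2$-summability of $(u_j(s))_{j\in\Z}$ coming from \eqref{uni:moment} with $p=2$ gives $u_j(s)\to 0$ as $|j|\to\infty$ a.s., whence continuity of $\zeta_K$ with $\zeta_K(0)=0$ makes the boundary terms $\zeta_K(u_{\pm N}(s))$ vanish in the limit, which in turn legitimises a Fubini exchange of $\sum_j$ and $\int_0^t$.
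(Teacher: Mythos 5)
Your argument is correct and follows essentially the same route as the paper: a convex penalization entropy that vanishes identically on $[-K,K]$ with $K=\max\{2M,\|u_0\|_{L^\infty(\R)}\}$, the observation that \ref{A31} and \ref{A6} (together with $|u+\lambda\eta(u;z)|\le(1+\lambda^*)|u|<2M\le K$ when $|u|\le M$) annihilate every noise contribution, and a telescoping of the flux term justified by $u_j(s)\to 0$ as $|j|\to\infty$. The only differences are cosmetic — you use the one-sided $C^2$ cubic $((u-K)^+)^3$ twice and argue pathwise via Lemma \ref{lem:cellentropyinequality_finitevolumescheme}, whereas the paper uses the two-sided quadratic $\big[(x+K)^-+(x-K)^+\big]^2$ and works in expectation.
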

\begin{proof}
First note that, for any $x \in (x_{j-\frac12},x_{j+\frac12})$, the scheme \eqref{eq:levy_stochconservation_ discrete_laws_1} reads
\begin{align*}
du_{\Delta x}(t,x) + \frac{1}{\Delta x}\Big[f(u_{\Delta x}(t,x+\Delta x))&-f(u_{\Delta x}(t,x))\Big] \\
&= \sigma(u_{\Delta x}(t,x)) \,dW(t) + \int_{|z|>0} \eta(u_{\Delta x}(t,x),z)\,\widetilde{N}(dz,dt).
\end{align*}
Applying It\^{o}-L\'{e}vy formula to $\beta(u_{\Delta x})$ with 
$$\beta(x)=\Big[\big(x+\max\{ 2M, \|u_0\|_{L^\infty(\R)}\}\big)^-+\big(x-\max\{ 2M, \|u_0\|_{L^\infty(\R)}\}\big)^+\Big]^2,$$ integrating with respect to $x,t$ and taking the expectation yield
\begin{align}
&\E \int_\R \beta(u_{\Delta x}(t,x))\,dx + 
\E \int_{0}^t \int_{\R}  \beta^{\prime}(u_{\Delta x}(s,x)) \frac{1}{\Delta x}\Big\{ f(u_{\Delta x}(s,x+\Delta x))-f(u_{\Delta x}(s,x))\Big\}\,dx\,ds \label{eq:121}
\\ =  & \E \int_\R \beta(u_{\Delta x}(0,x))\,dx + 
\frac12 \E\int_0^t \int_\R \sigma^2(u_{\Delta x}(s,x))\beta^{\prime\prime}(u_{\Delta x}(s,x))\,dx\,ds \notag \\
& + \E \int_0^t \int_{\R} \int_{|z|>0}\int_{\lambda=0}^1 (1-\lambda)\, \eta^2(u_{\Dx}(s,x);z) \, \beta^{\prime\prime}\big(u_{\Dx}(s,x)+\lambda\,\eta(u_{\Dx}(s,x);z)\big)\,d\lambda\, m(dz) \,dx\,ds. \notag
\end{align}
Next, observe that $\beta(u_{\Delta x}(0,x))=0$, $\sigma(\tau)\beta^{\prime\prime}(\tau)=0$, and 
$\eta(\tau)\beta^{\prime\prime}(\tau+\lambda \eta(\tau,.))=0$, for all $\tau$.
Moreover, since $\beta^\prime$ is a non-decreasing function and $f$ a non-increasing one, for any $x,y$
\begin{align*}
\beta^\prime(x)[f(y)-f(x)] \geq \int_x^y f^\prime(\tau)\beta^\prime(\tau)\,d\tau.
\end{align*}
Hence, \eqref{eq:121} leads to
\begin{align}
\E \int_\R \beta(u_{\Delta x}(t))\,dx + 
\E \int_{0}^t \int_{\R} \frac{1}{\Delta x}\Bigg\{ \int^{u_{\Delta x}(s,x+\Delta x)}_{u_{\Delta x}(s,x)} \beta^{\prime}(\tau)f^\prime(\tau)\,d\tau \Bigg\}\,dx\,ds
 \leq 0. 
\end{align}
Furthermore, since $\Dx \sum_j |u_j(s)|^2<+\infty$ for any $s$, $|u_j(s)|\to 0$ and
\begin{align*}
\int_{\R} \frac{1}{\Delta x}\Bigg\{ \int^{u_{\Delta x}(s,x+\Delta x)}_{u_{\Delta x}(s,x)} \beta^{\prime}(\tau)f^\prime(\tau)\,d\tau \Bigg\}\,dx = 
\sum_j\Bigg\{ \int^{u_{j+1}(s)}_0\beta^{\prime}(\tau)f^\prime(\tau)\,d\tau - \int^{u_j(s)}_0 \beta^{\prime}(\tau)f^\prime(\tau)\,d\tau \Bigg\}\,dx=0.
\end{align*}
Thus, for any $t$, $|u_{\Delta x}(t,x)|\leq \max\{ 2M, \|u_0\|_{L^\infty(\R)}\}$, $x$ a.e. and $\omega$ a.s.
 \end{proof} 
 
\begin{lem}\label{lem:bv-estimate}
Let the assumptions \ref{A1}-\ref{A6} be true. Let $u_{\Delta x}(t,x)$ be the finite volume approximations prescribed by the finite difference scheme \eqref{eq:levy_stochconservation_ discrete_laws_1}. Then for any $t>0$
 	 \begin{align}
 	 \E\Big[|u_{\Delta x}(t,\cdot)|_{BV(\R)}\Big] \le C \,\E\Big[|u_0(\cdot)|_{BV(\R)}\Big].\label{prop:bv}
 	 \end{align}	
 \end{lem}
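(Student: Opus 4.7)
\medskip
\noindent\textbf{Proof plan.} The plan is to exploit the fact that the Engquist--Osher scheme is monotone and translation invariant, combined with an It\^o--L\'evy argument applied to a smooth regularisation of $|\cdot|$. Since we work under the standing simplification $f'\le 0$, we have $F_{j+1/2}=f(u_{j+1})$, and I would set $w_j(t):=u_{j+1}(t)-u_j(t)$. Subtracting the scheme at index $j$ from the one at index $j+1$, one obtains the SDE
\begin{align*}
dw_j(t) + \frac{1}{\Dx}\bigl[f(u_{j+2}(t))-2f(u_{j+1}(t))+f(u_j(t))\bigr]\,dt
&= \bigl[\sigma(u_{j+1}(t))-\sigma(u_j(t))\bigr]\,dW(t) \\
&\quad + \int_{|z|>0}\bigl[\eta(u_{j+1}(t);z)-\eta(u_j(t);z)\bigr]\,\widetilde{N}(dz,dt),
\end{align*}
which (by the translation invariance of the scheme) should be thought of as the SDE for the difference of two solutions driven by the same noise.

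Next, I would fix a $C^2$ convex approximation $\beta_\delta$ of $|\cdot|$ (e.g.\ $\beta_\delta(r)=\sqrt{r^2+\delta^2}-\delta$) and apply the It\^o--L\'evy formula to $\beta_\delta(w_j)$, then multiply by $\Dx$, sum over $j\in\Z$, and take expectation; by the predictability of the integrands the martingale parts vanish. The remaining four contributions can be tackled as follows. For the flux term, a discrete summation by parts rewrites it as $\Dx^{-1}\sum_j[\beta_\delta'(w_j)-\beta_\delta'(w_{j-1})]\,(f(u_{j+1})-f(u_j))$; writing $f(u_{j+1})-f(u_j)=c_j\,w_j$ with $c_j=\int_0^1 f'(u_j+\theta w_j)\,d\theta\le 0$ and using the monotonicity of $\beta_\delta'$, one checks that each summand is $\le 0$, so the flux contribution is non-positive. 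For the It\^o correction, assumption~\ref{A3} gives $(\sigma(u_{j+1})-\sigma(u_j))^2\le C\,w_j^2$, and for the jump compensator assumption~\ref{A5} gives $|\eta(u_{j+1};z)-\eta(u_j;z)|^2\le (\lambda^*)^2\,w_j^2(|z|\wedge 1)^2$; writing the compensator via the Taylor form $\int_0^1(1-\lambda)\beta_\delta''(w_j+\lambda\Delta_j\eta)(\Delta_j\eta)^2\,d\lambda$ and using the standing bound $r^2\beta_\delta''(r)\le C|r|$ for our chosen $\beta_\delta$, together with $\int_{|z|>0}(|z|\wedge 1)^2\,m(dz)<\infty$, yields the pointwise estimate
\begin{align*}
\E\,\Dx\sum_j\beta_\delta(w_j(t))\le \E\,\Dx\sum_j\beta_\delta(w_j(0))+C\int_0^t\E\,\Dx\sum_j|w_j(s)|\,ds.
\end{align*}

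Letting $\delta\to 0$ (via monotone convergence on the left and dominated convergence on the right) and applying Gronwall's inequality then delivers
$\E\,\Dx\sum_j|w_j(t)|\le e^{CT}\,\E\,\Dx\sum_j|w_j(0)|$, i.e.\ $\E[|u_{\Dx}(t,\cdot)|_{BV(\R)}]\le C\,\E[|u_{\Dx}(0,\cdot)|_{BV(\R)}]$; the initial cell-averaging \eqref{eq:scheme_initial} only decreases the $BV$ seminorm, so $\E[|u_{\Dx}(0,\cdot)|_{BV(\R)}]\le \E[|u_0|_{BV(\R)}]$, which closes the argument.

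The delicate step is the jump compensator: one must argue that, because $\lambda^*<1$, the perturbation $\lambda\,\Delta_j\eta$ is strictly smaller than $|w_j|$ in absolute value, so $w_j+\lambda\,\Delta_j\eta$ keeps the same sign as $w_j$; this is what prevents the second-order term from picking up an extra factor that would spoil the Gronwall closure and it is the reason the hypothesis $\lambda^*<1$ appears in \ref{A5}. Handling this uniformly in the L\'evy variable $z$ (splitting $\{|z|\le 1\}$ versus $\{|z|>1\}$ if needed) is the main technical point of the proof; everything else is routine summation by parts and standard It\^o calculus.
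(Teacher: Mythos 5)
Your overall strategy --- apply the It\^o--L\'evy formula to a smooth convex approximation of $|\cdot|$ evaluated at the differences $w_j=u_{j+1}-u_j$, kill the martingale parts by taking expectations, control the It\^o correction via \ref{A3} and the jump compensator via \ref{A5} together with the lower bound $|w_j+\lambda\,\Delta_j\eta|\ge(1-\lambda^*)|w_j|$ --- is exactly the paper's, and your treatment of the two second-order terms is sound. (The paper works on a finite window $|j|\le N$ and uses $r^2\beta''(r)\le C\eps$, producing errors $C\,\eps\,t\,N$ that vanish as $\eps\to0$; you use $r^2\beta_\delta''(r)\le C|r|$ and close with Gronwall. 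Both work; yours yields the constant $e^{CT}$ where the paper obtains the sharper TV-diminishing bound with constant $1$, but the lemma only asks for some constant $C$.)

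The genuine problem is the flux term. After summation by parts the drift contribution is $\Dx^{-1}\sum_j\bigl[\beta_\delta'(w_j)-\beta_\delta'(w_{j-1})\bigr]c_j w_j$ with $c_j\le0$, and you claim each summand is $\le0$, i.e.\ $\bigl[\beta_\delta'(w_j)-\beta_\delta'(w_{j-1})\bigr]w_j\ge0$. This holds for $\beta'=\mathrm{sign}$ (it reads $|w_j|-\mathrm{sign}(w_{j-1})w_j\ge0$), but it is \emph{false} for the smooth approximation: take $w_j=\delta/10$ and $w_{j-1}=10\delta$, so that $\beta_\delta'(w_j)<\beta_\delta'(w_{j-1})$ while $w_j>0$; the summand is then strictly positive whenever $c_j<0$. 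What is true is only $\bigl[\beta_\delta'(w_j)-\beta_\delta'(w_{j-1})\bigr]w_j\ge-\min(\delta,|w_j|)$, so each summand may be positive by as much as $\|f'\|_{\infty}\min(\delta,|w_j|)/\Dx$, and since you sum over all of $\Z$ the crude bound $\sum_j\delta$ diverges. To repair this you must either (i) keep the error in the form $\frac{\|f'\|_\infty}{\Dx}\sum_j\min(\delta,|w_j(s)|)$ and justify that it vanishes as $\delta\to0$ by dominated convergence, dominating by $\frac{\|f'\|_\infty}{\Dx}\sum_j|w_j(s)|$, which is integrable thanks to the $\ell^1$ moment bound on $u_{\Dx}$; or (ii) do what the paper does: do not sum by parts, bound $\beta'(w_j)\,|c_{j+1}|\,w_{j+1}\le|c_{j+1}|\,|w_{j+1}|$ and let $\beta'(w_j)\,|c_j|\,w_j\to|c_j|\,|w_j|$, sum over a finite window $|j|\le N$ so that the flux contribution telescopes to two boundary terms, send $\eps\to0$ first and then $N\to\infty$, the boundary terms vanishing because $u_{\Dx}(s,\cdot)\in L^1(\R)$. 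Either route closes the argument, but as written the sign claim on which your flux estimate rests is incorrect.
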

 \begin{proof}
Note that, since the flux function $f$ is non-increasing i.e., $f^\prime \leq 0$, we have 
 $F_{j+\frac{1}{2}}=f(u_{j+1})$ and hence we obtain 
 \begin{align*}
  & d\big(u_{j+1}(t)-u_{j}(t)\big) + \frac{1}{\Delta x} \big( f(u_{j+2}(t))-f(u_{j+1}(t))\big)\,dt -\frac{1}{\Delta x} 
  \big( f(u_{j+1}(t))-f(u_{j}(t))\big)\,dt \\
  & \hspace{3cm}= \big( \sigma(u_{j+1}(t))-\sigma(u_j(t))\big)\,dW(t) + \int_{|z|>0} \big( \eta(u_{j+1}(t);z)-\eta(u_{j}(t);z)\big)\,\widetilde{N}(dz,dt).
 \end{align*}
 Again, since $f$ is non-increasing, we have
 \begin{align*}
  & d\big(u_{j+1}(t)-u_{j}(t)\big) - \frac{1}{\Delta x} \Big|\frac{ f(u_{j+2}(t))-f(u_{j+1}(t))}{u_{j+2}(t)-u_{j+1}(t)}\Big| \big( u_{j+2}(t)-u_{j+1}(t)\big)\,dt\\
  & \hspace{1cm}+ \frac{1}{\Delta x} \Big|\frac{ f(u_{j+1}(t))-f(u_{j}(t))}{u_{j+1}(t)-u_{j}(t)}\Big| \big( u_{j+1}(t)-u_{j}(t)\big)\,dt \\
 & = \big( \sigma(u_{j+1}(t))-\sigma(u_j(t))\big)\,dW(t)  + \int_{|z|>0} \big( \eta(u_{j+1}(t);z)-\eta(u_{j}(t);z)\big)\,\widetilde{N}(dz,dt).
 \end{align*}
Let  $\eps$ be a small positive parameter and $\beta=\beta_{\eps} \in C^2(\R)$ be a convex function approximating the absolute value function such that $\beta^\prime(r)\mapsto \mbox{sign}(r)$, as 
 $\eps \mapsto 0$. We apply It\^{o}-L\'{e}vy formula to $\beta \big(u_{j+1}(t)-u_j(t)\big)$ to conclude
 \begin{align*}
  & \beta \big(u_{j+1}(t)-u_{j}(t)\big) \\
  &=\beta \big(u_{j+1}(0)-u_{j}(0)\big) + \frac{1}{\Delta x}\int_{0}^t \beta^\prime \big(u_{j+1}(s)-u_j(s)\big)
  \Big|\frac{ f(u_{j+2}(s))-f(u_{j+1}(s))}{u_{j+2}(s)-u_{j+1}(s)} \Big| \big( u_{j+2}(s)-u_{j+1}(s)\big)\,ds\\
  & \hspace{1cm} -\frac{1}{\Delta x} \int_{0}^t \beta^\prime \big(u_{j+1}(s)-u_j(s)\big)\Big|\frac{ f(u_{j+1}(s))-f(u_{j}(s))}{u_{j+1}(s)-u_{j}(s)}\Big|
  \big( u_{j+1}(s)-u_{j}(s)\big)\,ds \\
& \hspace{1.5cm} + \int_0^t \beta^\prime \big(u_{j+1}(s)-u_j(s)\big) \big( \sigma(u_{j+1}(s))-\sigma(u_j(s))\big)\,dW(s) \\
& \hspace{2cm}+ \frac{1}{2} \int_0^t \beta^{\prime\prime} \big(u_{j+1}(s)-u_j(s)\big) \big(\sigma(u_{j+1}(s))-\sigma(u_j(s))\big)^2\,ds \\
&   + \int_0^t \int_{|z|>0} \int_0^1  \beta^\prime \Big(\big(u_{j+1}(s)-u_j(s)\big) +
\lambda \big( \eta(u_{j+1}(s);z)-\eta(u_{j}(s);z)\big)\Big) \notag \\
& \hspace{8.5cm} \times   \big( \eta(u_{j+1}(s);z)-\eta(u_{j}(s);z)\big)\,d\lambda \,\widetilde{N}(dz,ds) \notag \\
&   + \int_0^t \int_{|z|>0} \int_0^1 (1-\lambda) \beta^{\prime\prime} \Big(\big(u_{j+1}(s)-u_j(s)\big) +
\lambda \big( \eta(u_{j+1}(s);z)-\eta(u_{j}(s);z)\big)\Big)  \notag \\
& \hspace{8.5cm} \times \big( \eta(u_{j+1}(s);z)-\eta(u_{j}(s);z)\big)^2\,d\lambda \,m(dz)\,ds.
 \end{align*}
 Summing over $j$ from $-N$ to $N$ where $N\in \mathbb{N}$ and then taking expectation, we obtain
 \begin{align}
  & \E\Big[ \sum_{j=-N}^{N}\beta \big(u_{j+1}(t)-u_{j}(t)\big)\Big]-\E\Big[ \sum_{j=-N}^{N} \beta \big(u_{j+1}(0)-u_{j}(0)\big)\Big] \notag \\
  &= \frac{1}{\Delta x}  \E\Big[ \sum_{j=-N}^{N} \int_{0}^t \beta^\prime \big(u_{j+1}(s)-u_j(s)\big)
  \Big|\frac{ f(u_{j+2}(s))-f(u_{j+1}(s))}{u_{j+2}(s)-u_{j+1}(s)} \Big| \big( u_{j+2}(s)-u_{j+1}(s)\big)\,ds\Big]\notag \\
  & \hspace{1cm} -\frac{1}{\Delta x}  \E\Big[ \sum_{j=-N}^{N} \int_{0}^t \beta^\prime \big(u_{j+1}(s)-u_j(s)\big)\Big|\frac{ f(u_{j+1}(s))-f(u_{j}(s))}{u_{j+1}(s)-u_{j}(s)}\Big|
  \big( u_{j+1}(s)-u_{j}(s)\big)\,ds\Big] \notag \\
& \hspace{2cm}+ \frac{1}{2}  \E\Big[ \sum_{j=-N}^{N} \int_0^t \beta^{\prime\prime} \big(u_{j+1}(s)-u_j(s)\big) \big(\sigma(u_{j+1}(s))-\sigma(u_j(s))\big)^2\,ds\Big] \notag  \\
& +  \E\Big[ \sum_{j=-N}^{N} \int_0^t \int_{|z|>0} \int_0^1 (1-\lambda) \beta^{\prime\prime} \Big(\big(u_{j+1}(s)-u_j(s)\big) +
\lambda \big( \eta(u_{j+1}(s);z)-\eta(u_{j}(s);z)\big)\Big)  \notag \\
&   \hspace{4cm} \times \big( \eta(u_{j+1}(s);z)-\eta(u_{j}(s);z)\big)^2\,d\lambda \,m(dz)\,ds\Big] \notag \\
&\le \frac{1}{\Delta x}  \E\Big[ \sum_{j=-N}^{N} \int_{0}^t \beta^\prime \big(u_{j+1}(s)-u_j(s)\big)
\Big|\frac{ f(u_{j+2}(s))-f(u_{j+1}(s))}{u_{j+2}(s)-u_{j+1}(s)} \Big| \big( u_{j+2}(s)-u_{j+1}(s)\big)\,ds\Big] \notag \\
& \hspace{1cm} -\frac{1}{\Delta x}  \E\Big[ \sum_{j=-N}^{N} \int_{0}^t \beta^\prime \big(u_{j+1}(s)-u_j(s)\big)\Big|\frac{ f(u_{j+1}(s))-f(u_{j}(s))}{u_{j+1}(s)-u_{j}(s)}\Big|
\big( u_{j+1}(s)-u_{j}(s)\big)\,ds\Big] \notag  \\
& \hspace{2cm} + C  \E\Big[ \sum_{j=-N}^{N} \int_0^t \beta^{\prime\prime} \big(u_{j+1}(s)-u_j(s)\big) \big(u_{j+1}(s)-u_j(s)\big)^2\,ds\Big] \notag  \\
&   + C \E\Big[ \sum_{j=-N}^{N} \int_0^t \int_{|z|>0} \int_0^1 (1-\lambda) \beta^{\prime\prime} \Big(\big(u_{j+1}(s)-u_j(s)\big) +
\lambda \big( \eta(u_{j+1}(s);z)-\eta(u_{j}(s);z)\big)\Big)  \notag \\
&  \hspace{4cm} \times \big(u_{j+1}(s)-u_{j}(s)\big)^2 (1\wedge |z|^2)\,d\lambda \,m(dz)\,ds\Big] \notag \\
& \le \frac{1}{\Delta x}  \E\Big[ \sum_{j=-N}^{N} \int_{0}^t \beta^\prime \big(u_{j+1}(s)-u_j(s)\big)
  \Big|\frac{ f(u_{j+2}(s))-f(u_{j+1}(s))}{u_{j+2}(s)-u_{j+1}(s)} \Big| \big( u_{j+2}(s)-u_{j+1}(s)\big)\,ds\Big] \notag \\
  & \hspace{.1cm} -\frac{1}{\Delta x}  \E\Big[ \sum_{j=-N}^{N} \int_{0}^t \beta^\prime \big(u_{j+1}(s)-u_j(s)\big)\Big|\frac{ f(u_{j+1}(s))-f(u_{j}(s))}{u_{j+1}(s)-u_{j}(s)}\Big|
  \big( u_{j+1}(s)-u_{j}(s)\big)\,ds\Big] + C\, \eps\,t\,N \notag \\
  &  + C \E\Big[ \sum_{j=-N}^{N} \int_0^t \int_{|z|>0} \int_0^1  \beta^{\prime\prime} \Big(\big(u_{j+1}(s)-u_j(s)\big) +
\lambda \big( \eta(u_{j+1}(s);z)-\eta(u_{j}(s);z)\big)\Big)  \notag \\
&  \hspace{4cm} \times \big(u_{j+1}(s)-u_{j}(s)\big)^2(1\wedge |z|^2)\,d\lambda \,m(dz)\,ds\Big] \notag \\
&  := \frac{1}{\Delta x}  \E\Big[ \sum_{j=-N}^{N} \int_{0}^t \beta^\prime \big(u_{j+1}(s)-u_j(s)\big)
  \Big|\frac{ f(u_{j+2}(s))-f(u_{j+1}(s))}{u_{j+2}(s)-u_{j+1}(s)} \Big| \big( u_{j+2}(s)-u_{j+1}(s)\big)\,ds\Big] \notag \\
  & \hspace{1cm} -\frac{1}{\Delta x}  \E\Big[ \sum_{j=-N}^{N} \int_{0}^t \beta^\prime \big(u_{j+1}(s)-u_j(s)\big)\Big|\frac{ f(u_{j+1}(s))-f(u_{j}(s))}{u_{j+1}(s)-u_{j}(s)}\Big|
  \big( u_{j+1}(s)-u_{j}(s)\big)\,ds\Big]  \notag \\
  &  \hspace{4cm}+ C\, \eps\,t\,N + \mathcal{A}, \label{esti:discrete-bv-1}
 \end{align}
where we have used the assumption \ref{A3}, along with the fact that for $\beta=\beta_{\eps}$, $r^2 \beta^{\prime\prime}(r)\le C\,\eps$.
Let us consider the term $\mathcal{A}$. By our assumption \ref{A5}, since $\lambda \in [0,1]$, one has that
\begin{align*}
\Big|\big(u_{j+1}(s)-u_j(s)\big) + \lambda \big( \eta(u_{j+1}(s);z)-\eta(u_{j}(s);z)\big)\Big| 
\geq 
\Big|\big(u_{j+1}(s)-u_j(s)\big)\Big| \left(1 - \lambda^*\right).
\end{align*}
 Since $r^2 \beta^{\prime\prime}(r) \le C \eps$, we see that, by assumption \ref{A4}
 \begin{align}
  \mathcal{A} &= C \E\Big[ \sum_{j=-N}^{N} \int_0^t \int_{|z|>0} \int_0^1  \beta^{\prime\prime} \Big(\big(u_{j+1}(s)-u_j(s)\big) +
\lambda \big( \eta(u_{j+1}(s);z)-\eta(u_{j}(s);z)\big)\Big) \notag \\
& \hspace{4cm} \times \big(u_{j+1}(s)-u_{j}(s)\big)^2 (1\wedge |z|^2)\,d\lambda \,m(dz)\,ds\Big] \notag \\
& \le C (1-\lambda^*)^{-2} \eps\,  \E\Big[ \sum_{j=-N}^{N} \int_0^t \int_{|z|>0} (1\wedge |z|^2)\,d\lambda \,m(dz)\,ds\Big] 
 \le  C (1-\lambda^*)^{-2} \eps\,t\,N. \label{esti:extra-jump term}
 \end{align}
Thus, using \eqref {esti:extra-jump term} in \eqref{esti:discrete-bv-1} and then sending the limit as $\eps \goto 0$, we get
 \begin{align}
\E\Big[ \sum_{j=-N}^{N}\big|u_{j+1}(t)-u_{j}(t)\big|\Big]& -\E\Big[ \sum_{j=-N}^{N} \big|u_{j+1}(0)-u_{j}(0)\big|\Big] \notag \\
& \le \frac{1}{\Delta x}  \E\Big[ \sum_{j=-N}^{N} \int_{0}^t
  \Big|\frac{ f(u_{j+2}(s))-f(u_{j+1}(s))}{u_{j+2}(s)-u_{j+1}(s)} \Big| \big|u_{j+2}(s)-u_{j+1}(s)\big|\,ds\Big] \notag \\
  & \hspace{1cm} -\frac{1}{\Delta x} \E\Big[ \sum_{j=-N}^{N} \int_{0}^t \Big|\frac{ f(u_{j+1}(s))-f(u_{j}(s))}{u_{j+1}(s)-u_{j}(s)}\Big|
  \big| u_{j+1}(s)-u_{j}(s)\big|\,ds\Big] \notag \\
  & = \frac{1}{\Delta x}  \E\Big[ \int_{0}^t
  \Big|\frac{ f(u_{N+2}(s))-f(u_{N+1}(s))}{u_{N+2}(s)-u_{N+1}(s)} \Big| \big|u_{N+2}(s)-u_{N+1}(s)\big|\,ds\Big] \notag \\
  & \hspace{1cm} -\frac{1}{\Delta x} \E\Big[ \int_{0}^t \Big|\frac{ f(u_{-N+1}(s))-f(u_{-N}(s))}{u_{-N+1}(s)-u_{-N}(s)}\Big|
  \big| u_{-N+1}(s)-u_{-N}(s)\big|\,ds\Big] \notag \\
  & \le \frac{1}{\Delta x}  \E\Big[ \int_{0}^t
  \Big|\frac{ f(u_{N+2}(s))-f(u_{N+1}(s))}{u_{N+2}(s)-u_{N+1}(s)} \Big| \big|u_{N+2}(s)-u_{N+1}(s)\big|\,ds\Big] \notag \\
  & \le \frac{C}{\Delta x}  \E\Big[ \int_{0}^t \big|u_{N+2}(s)-u_{N+1}(s)\big|\,ds\Big].\label{inq:discrete-bv}
 \end{align}
 By our definition of approximate finite volume solutions $u_{\Delta x}(t,x)$, we have 
 \begin{align*}
  \big|u_{N+2}(s)-u_{N+1}(s)\big| \le \frac{1}{\Delta x} \int_{\Delta x(N+\frac{1}{2})}^{\Delta x(N+\frac{3}{2})} 
  \big| u_{\Delta x}(s, \Delta x + x)-u_{\Delta x}(s,x)\big|\,.
 \end{align*}
 Again, since for almost all $\omega \in \Omega$, and a.e. $t\in [0,T]$, approximate solution $u_{\Delta x}(t,\cdot,\omega) \in L^1(\R)$, we see that 
 $ \big|u_{N+2}(s)-u_{N+1}(s)\big| \goto 0$ as $N \goto \infty$. Moreover, in view of boundedness property (cf. Lemma~\ref{imp}), one can use dominated convergence theorem to 
 conclude that right hand side of the inequality \eqref{inq:discrete-bv} goes to zero as $N\goto \infty$. Thus, passing to the limit as $N\goto + \infty$ in \eqref{inq:discrete-bv}, we obtain 
 \begin{align}
   \E\Big[ \sum_{j\in \Z}\big|u_{j+1}(t)-u_{j}(t)\big|\Big]\le \E\Big[ \sum_{j\in \Z}\big|u_{j+1}(0)-u_{j}(0)\big|\Big].\label{inq:discrete-bv-1}
 \end{align}
 Note that, in view of the lower semi-continuity property and the positivity of the total variation $TV_x$,  $\E[TV_x(u)]$ makes sense for 
 any $u\in L^1(\Omega \times \R)$.
 Since $u_0 \in BV(\R)$, we conclude that, for all $t>0$
 \begin{align*}
  \E\Big[ TV_x(u_{\Delta x}(t))\Big] \le  \E\Big[ TV_x(u_0)\Big]. 
 \end{align*}
Again, since $ \E\Big[\|u_{\Delta x}(t,\cdot)\|_{L^1(\R)}\Big] \le  C \,\E\Big[ \|u_0(\cdot)\|_{L^1(\R^d)}\Big] $, we arrive at the following conclusion that the approximate solution $u_{\Delta x}(t,x)$ lies in spatial BV class and satisfies \eqref{prop:bv}. This completes the proof.
 \end{proof}

\subsection{Convergence of Approximate Solutions}
Having secured all necessary a priori bounds on the approximate solutions $u_{\Dx}(t,x)$, we are
ready to prove the convergence of approximate solutions $u_{\Delta x}(t,x)$ to the entropy solutions of the underlying problem  \eqref{eq:stoc_con_laws}. We remark that, uniform moment estimates 
\eqref{uni:moment} and bounded variation estimate \eqref{prop:bv}
are not enough to ensure the compactness of the family $\{u_{\Delta x}(t,x)\}_{\Delta x >0}$ in the classical sense.
However, we use notion of Young measure in stochastic setting to prove the convergence of approximate solutions, and we have postpone the proof to Appendix~\ref{sec:app}. In fact, it is possible to prove the following theorem.

\begin{thm}
\label{thm:con}
Let the assumptions \ref{A1}-\ref{A6} hold. Then there exists a unique generalized entropy solution
$u(t,x,\alpha)$ to the initial value problem \eqref{eq:stoc_con_laws}. Moreover, $u(t,x,\alpha)$ is an independent function of variable $\alpha$ and $\bar{u}(t,x)=\int_0^1 u(t,x,\tau) d\tau = u(t,x,\alpha)$ (for all $\alpha$) is the unique stochastic entropy solution. 
\smallskip \\
More precisely, let $u_{\Delta x}(t,x)$ be a finite volume approximations as prescribed by \eqref{eq:app-solutions_1}. Then, 
 \begin{itemize}
 \item[$(a)$]  $u_{\Delta x}(t,x)\mapsto u(t,x,\alpha)$ in the sense of Young-measure.
 \item[$(b)$] $ u_{\Delta x}(t,x)\rightharpoonup \bar{u}(t,x)$ in $L^2(\Omega\times(0,T)\times\R)$ and $ u_{\Delta x}(t,x)\mapsto \bar{u}(t,x)$ in  $L_{\text{loc}}^p(\R; L^p (\Omega \times (0,T))$, for $1\le p < \infty$.
 \item[$(c)$] Furthermore, for a.e. $t \in (0,T)$, the following BV bound holds:
  $$\E\big[|\bar{u}(t,\cdot)|_{BV(\R)}\big] \le C \,\E\big[|u_0(\cdot)|_{BV(\R)}\big].$$
\end{itemize}
 \end{thm}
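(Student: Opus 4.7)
The plan is to leverage the a priori estimates of Section~\ref{sec:apriori&convergence} (uniform $L^p$ moments \eqref{uni:moment}, the almost-sure $L^\infty$ bound from Lemma~\ref{imp}, and the spatial BV bound \eqref{prop:bv}) to extract a Young-measure limit, pass to the limit in the discrete entropy inequality of Lemma~\ref{lem:cellentropyinequality_finitevolumescheme}, and then invoke the uniqueness theory for generalized entropy solutions (as developed in \cite{BaVaWit,BisKarlMaj}). First, I would associate to the family $\{u_{\Delta x}\}$ on the predictable $\sigma$-field the narrow Young measure $\nu_{\Delta x}$ on $\Omega\times(0,T)\times\R\times\R$. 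By the uniform $L^2$ bound and the tightness implied by the BV estimate, a subsequence (not relabelled) converges, in the Young-measure sense adapted to the predictable setting, to a limit $\nu$ representable (via the usual parametrisation on $(0,1)$) by a predictable process $u(t,x,\alpha)$ satisfying the $L^2$ bound required in Definition~\ref{defi: young_stochentropsol}.

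Next, I would test the discrete cell-entropy inequality from Lemma~\ref{lem:cellentropyinequality_finitevolumescheme} against a non-negative $\psi\in C_c^{1,2}([0,\infty)\times\R)$, sum in $j$ with weight $\Delta x$, and rewrite each sum as a space-time integral of the piecewise-constant reconstruction. The deterministic drift terms (entropy flux, second-order $\sigma^2$ correction, and the $m(dz)$ term) pass to the limit using the Young-measure representation and uniform integrability supplied by \eqref{uni:moment} and Lemma~\ref{imp}. The Brownian stochastic integral passes using It\^o-isometry together with the strong convergence of $\sigma(u_{\Delta x})\psi$ in $L^2(\Omega\times(0,T)\times\R)$ (or, alternatively, via the Young-measure identification of weak limits of martingales as in \cite{BisKarlMaj}). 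The compensated Poisson integral is handled analogously, using \ref{A5}--\ref{A6} to control the jump increments by $1\wedge|z|^2$, so that \ref{A4} furnishes dominated-convergence control. Combined with standard truncation in $x$ to absorb the flux boundary contributions (which vanish as $\Delta x \to 0$ by the BV estimate), this produces the generalized entropy inequality of Definition~\ref{defi: young_stochentropsol}, showing that $u(t,x,\alpha)$ is a generalized entropy solution.

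Having produced a generalized entropy solution, uniqueness comes from the existing theory: any two generalized entropy solutions with the same (deterministic-in-$\alpha$) initial datum agree $d\mathbb{P}\otimes dt\otimes dx\otimes d\alpha\otimes d\beta$-a.e.\ on two independent $\alpha$-copies, which forces $u(t,x,\alpha)$ to be $\alpha$-independent and to coincide with the barycentre $\bar u(t,x)=\int_0^1 u(t,x,\tau)\,d\tau$. This also yields that $\bar u$ is a genuine stochastic entropy solution in the sense of Definition~\ref{defi:stochentropsol}. The $\alpha$-independence promotes narrow Young-measure convergence to strong convergence: $u_{\Delta x}\to\bar u$ in $L^p_{\mathrm{loc}}(\R;L^p(\Omega\times(0,T)))$ for $p\in[1,\infty)$, while the uniform $L^2$ bound gives the weak $L^2$ convergence of item $(b)$. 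The BV bound $(c)$ for $\bar u$ follows from \eqref{prop:bv} by lower semi-continuity of the total-variation semi-norm together with Fatou's lemma.

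The main obstacle I expect is the rigorous passage to the limit in the two stochastic integrals: the Young-measure identification is straightforward for continuous nonlinearities evaluated against deterministic test functions, but the It\^o and compensated Poisson integrals require more than narrow convergence. The cleanest route is to show, a posteriori from uniqueness, that the whole sequence converges and the Young measure is a Dirac mass; this is bootstrapped from the fact that a limit \emph{as a Young measure} always provides a generalized entropy solution, which is unique and Dirac. An alternative, more direct, route uses the martingale representation and Skorokhod-type arguments on the Poisson part, together with \ref{A5}--\ref{A6} to bound the jump contributions uniformly in $\Delta x$. Either way the argument is a stochastic-Young-measure variant of the classical compensated-compactness/BV programme, and I would follow the template of \cite{BisKarlMaj} verbatim where possible, relegating the details to Appendix~\ref{sec:app} as the authors announce.
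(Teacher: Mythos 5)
Your proposal follows essentially the same route as the paper's Appendix~\ref{sec:app}: sum the cell entropy inequality of Lemma~\ref{lem:cellentropyinequality_finitevolumescheme} against a test function, control the $\mathcal{O}(\Delta x)$ discretization errors via the moment estimate \eqref{uni:moment}, pass to the limit with the predictable Young-measure machinery (testing against arbitrary $B\in\mathcal{F}_T$ and invoking the identification of limits of the stochastic integrals from \cite{BisKarlMaj,BisMaj}), and then conclude $\alpha$-independence, strong $L^p_{\loc}$ convergence, and the BV bound $(c)$ exactly as you describe. The one caveat is that your first suggested treatment of the Brownian integral (strong $L^2$ convergence of $\sigma(u_{\Delta x})\psi$) is circular before the Dirac structure is known, but your fallback — the weak/Young-measure identification as in \cite{BisKarlMaj} — is precisely what the paper does.
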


\begin{rem}
In the Appendix~\ref{sec:app}, we have shown that $u_{\Delta x}(t,x)$ converges 
to the unique entropy solution $\bar{u}(t,x)$ to the problem \eqref{eq:stoc_con_laws}. 
In view of the lower semi-continuity property of $TV_x$ and 
Fatou's lemma, we have, for a.e. $t>0$,
\begin{align*}
 \E\Big[ TV_x(\bar{u}(t,\cdot))\Big] \le \liminf_{\Delta x \goto 0} \E\Big[ TV_x(u_{\Delta x}(t,\cdot))\Big] \le \E\Big[ TV_x(u_0)\Big],
\end{align*}
where the last inequality follows from Lemma~\ref{lem:bv-estimate}. Thus, $\bar{u}(t,x)$ is a function of bounded variation in spatial variable, which proves part $(c)$ of the above Theorem~\ref{thm:con}.
\end{rem}


\subsection{Average Time Continuity}
We move on to establish the average time continuity of $u_{\Dx}$, independent of the discretization parameter
$\Dx>0$.
\begin{lem}
\label{continuity in time}
Let the assumptions \ref{A1}-\ref{A6} be true. Moreover, let $u_{\Delta x}(t,x)$ be the finite difference approximations prescribed by the finite difference scheme \eqref{eq:levy_stochconservation_ discrete_laws_1}. Then, for any compact subset $K \subset \R$, there exists a constant $C=C\big(|K|,E[\mbox{TV}(u_0)],T\big)$, such that for $s<t$ 
\begin{align*}
\E\Big[\int_{K} |u_{\Delta x}(t,x)-u_{\Delta x}(s,x)| dx\Big] \leq 
C\,|t-s|  + C \,\big(|K|+ \Delta x\big)  \sqrt{t-s}.
\end{align*}
Moreover, for any standard mollifier $\rho_{\delta_0}(r) = \frac{1}{\delta_0}\rho(\frac{r}{\delta_0})$ on $\R$ with $\supp(\rho) \subset [-1,0)$, we have the following estimates.
\begin{itemize}
\item [$(a)$] For the approximate solutions $u_{\Dx}(t,x)$ $(\text{cf.}\, \eqref{eq:app-solutions_1})$
\begin{align*}
 \int_0^T\int_0^T \E\Big[\int_{K} |u_{\Delta x}(t,x)-u_{\Delta x}(s,x)| dx\Big]  \rho_{\delta_0}(t-s)\,ds\,dt
 \leq  C  \sqrt{\delta_0}.
 \end{align*}
\item [$(b)$] For the unique entropy solution $u(t,x)$ of \eqref{eq:stoc_con_laws}
\begin{align*}
\int_0^T\int_0^T \E\Big[\int_{K} |u(t,x)-u(s,x)| dx\Big]  \rho_{\delta_0}(t-s)\,ds\,dt
 \leq  C  \sqrt{\delta_0}.
\end{align*}
\end{itemize}
\end{lem}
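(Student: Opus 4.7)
The plan is to read $u_{\Dx}(t,x)-u_{\Dx}(s,x)$ off the scheme, decompose it into its flux, Brownian and compensated Poisson contributions, and estimate each piece separately using the spatial BV bound of Lemma~\ref{lem:bv-estimate} together with the It\^o/L\'evy isometries for the two stochastic integrals. Integrating the resulting pointwise bound against $\rho_{\delta_0}$ will give part $(a)$, and part $(b)$ will follow by passing to the limit using Theorem~\ref{thm:con}$(b)$.

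Concretely, for $x\in I_j$ the scheme \eqref{eq:levy_stochconservation_ discrete_laws_1} integrated on $[s,t]$ yields
\begin{align*}
u_{\Dx}(t,x)-u_{\Dx}(s,x) &= -\int_s^t \frac{F_{j+1/2}(\tau)-F_{j-1/2}(\tau)}{\Dx}\,d\tau \\
&\quad +\int_s^t \sigma(u_j(\tau))\,dW(\tau) + \int_s^t \int_{|z|>0}\eta(u_j(\tau);z)\,\widetilde{N}(dz,d\tau).
\end{align*}
Taking $|\cdot|$, integrating over $K$, and taking expectation, I would treat the three terms in turn. For the flux contribution, summing over the $O(|K|/\Dx)$ cells $I_j$ meeting $K$, the Lipschitz continuity of the EO flux (with constant $\|f'\|_{\infty}$) and telescoping dominate it pointwise by $C\int_s^t |u_{\Dx}(\tau)|_{BV(\R)}\,d\tau$, whose expectation is at most $C|t-s|\,\E[|u_0|_{BV(\R)}]$ by Lemma~\ref{lem:bv-estimate}. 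For the Brownian piece, Jensen's inequality in $x$ followed by the It\^o isometry gives
\[
\E\int_K \Big|\int_s^t \sigma(u_{\Dx}(\tau,x))\,dW(\tau)\Big|\,dx \leq \int_K \Big(\E\int_s^t \sigma^2(u_{\Dx}(\tau,x))\,d\tau\Big)^{1/2}dx,
\]
and \ref{A3}--\ref{A31} together with Lemma~\ref{imp} render $\sigma(u_{\Dx})$ uniformly bounded, producing the bound $C|K|\sqrt{t-s}$. The compensated Poisson integral is handled identically with its own isometry: assumptions \ref{A5}--\ref{A6} and $\eta(0;z)=0$ force $|\eta(u_{\Dx};z)|\leq C(|z|\wedge 1)$, so $\eta^2$ is $m$-integrable by \ref{A4}, contributing another $C|K|\sqrt{t-s}$. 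The extra $\Dx\sqrt{t-s}$ in the stated inequality accounts for the (at most two) boundary cells $I_j$ partially overlapping $\partial K$.

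For $(a)$, I multiply the pointwise estimate by $\rho_{\delta_0}(t-s)$ and integrate over $(0,T)^2$. Since $\supp \rho_{\delta_0}\subset[-\delta_0,0)$ and $\int \rho_{\delta_0}=1$, the Lipschitz-in-time term contributes at most $CT\delta_0$ and the square-root-in-time term at most $CT\sqrt{\delta_0}$; the latter dominates for small $\delta_0$, giving the asserted $C\sqrt{\delta_0}$. For $(b)$, Theorem~\ref{thm:con}$(b)$ yields $u_{\Dx}\to u$ in $L^1_{\text{loc}}(\R;L^1(\Omega\times(0,T)))$; extracting a subsequence that converges a.e.\ in $(\omega,t,s,x)$ and applying Fatou's lemma to the double integral in $(a)$ transfers the $C\sqrt{\delta_0}$ bound from $u_{\Dx}$ to $u$.

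The step I expect to require most care is the compensated Poisson term: its jumps are not a priori small in sup-norm and $m$ may be singular at $z=0$, so one cannot na\"ively $L^\infty_z$-bound $\eta$. The remedy, as above, is the simultaneous use of the Lipschitz bound \ref{A5}, the truncation \ref{A6} and the normalisation $\eta(0;z)=0$, which together produce the tame majorant $|\eta(u_{\Dx};z)|^2\leq C(1\wedge|z|^2)$ needed to make the isometry argument yield the $\sqrt{t-s}$ behaviour parallel to the Brownian case.
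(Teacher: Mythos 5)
Your proposal is correct and follows essentially the same route as the paper: integrate the scheme over $[s,t]$, control the flux increment by the discrete spatial BV bound of Lemma~\ref{lem:bv-estimate} to get the $C|t-s|$ term, use the It\^o and L\'evy isometries together with the boundedness of $\sigma$ and $\eta$ (from \ref{A31}, \ref{A6}) and \ref{A4} to get the $C(|K|+\Delta x)\sqrt{t-s}$ term, then integrate against $\rho_{\delta_0}$ for $(a)$ and pass to the limit $\Delta x\to 0$ via Theorem~\ref{thm:con}$(b)$ for $(b)$. The only cosmetic difference is that the paper applies Cauchy--Schwarz to the discrete sum $\sum_j \Delta x|\cdot|$ rather than Jensen pointwise in $x$, which yields the same bound.
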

\begin{proof}
Note that, since $f$ is Lipschitz, a simple calculation using the scheme \eqref{eq:levy_stochconservation_ discrete_laws_1} reveals that for any $s<t$
\begin{align*}
& \Delta x |u_j(t)-u_j(s)| \\
\quad &\leq \int_s^t|f(u_{j+1}(\tau))-f(u_{j}(\tau))|d\tau + \Delta x \Big|\int_s^t \sigma(u_j(\tau))dW(\tau)\Big|
+ \Delta x \Big|\int_s^t \int_{|z|>0} \eta(u_j(\tau);z) \widetilde{N}(dz,d\tau)\Big|
\\
\qquad &\leq C\int_s^t|u_{j+1}(\tau)-u_{j}(\tau)|d\tau + \Delta x \Big|\int_s^t \sigma(u_j(\tau))dW(\tau)\Big|
 + \Delta x \Big|\int_s^t \int_{|z|>0} \eta(u_j(\tau);z) \widetilde{N}(dz,d\tau)\Big|.
\end{align*}
Thus, for any given compact subset $K \subset \R$ with $K \subset [-N,N]$, by \eqref{inq:discrete-bv-1} and the boundedness of $\sigma$ and $\eta$ along
with assumption \ref{A4}, 
one has that
\begin{align*}
& \E\Big[\int_{K} |u_{\Delta x}(t,x)-u_{\Delta x}(s,x)| dx\Big] \\
\leq & \E\Big[\int_{-N-\frac12}^{N+\frac12} |u_{\Delta x}(t,x)-u_{\Delta x}(s,x)| dx\Big] 
=
\E\Big[\sum_{|j|\leq N}\Delta x |u_j(t)-u_j(s)| \Big]
\\ \leq& 
C\,\E\Big[\int_s^t \sum_{j \in \N}|u_{j+1}(0)-u_{j}(0)|d\tau\Big]
+ \E\Big[\sum_{|j|\leq N} \Delta x \Big|\int_s^t \sigma(u_j(\tau))dW(\tau)\Big| \Big]  \\
& \hspace{3cm}+ 
\E\Big[\sum_{|j|\leq N} \Delta x \Big|\int_s^t \int_{|z|>0} \eta(u_j(\tau);z)\, \widetilde{N}(dz,d\tau)\Big|\Big] 
\\ \leq& 
C\,|t-s|  + \E\Big[\sum_{|j|\leq N} \Delta x  \Big]^{1/2} \E\Bigg[\sum_{|j|\leq N} \Delta x \Big|\int_s^t \sigma(u_j(\tau))\,dW(\tau)\Big|^2 \Bigg]^{1/2}
\notag \\
& \hspace{3cm} +   \E\Big[\sum_{|j|\leq N} \Delta x  \Big]^{1/2} \E\Bigg[\sum_{|j|\leq N} \Delta x \Big|\int_s^t \int_{|z|>0}\eta(u_j(\tau);z)\, \widetilde{N}(dz,d\tau)\Big|^2 \Bigg]^{1/2}
\\ \leq& 
C\,|t-s|  + \Big[(2N+1) \Delta x \Big]^{1/2}
\E\Bigg[\sum_{|j|\leq N} \Delta x \int_s^t |\sigma(u_j(\tau))|^2d\tau \Bigg]^{1/2} \\
& \hspace{3cm}+ \Big[(2N+1) \Delta x \Big]^{1/2}
\E\Bigg[\sum_{|j|\leq N} \Delta x \int_s^t \int_{|z|>0} |\eta(u_j(\tau);z)|^2 \,m(dz)\,d\tau \Bigg]^{1/2} \\ \leq& 
C\,|t-s|  + C (2N+1) \Delta x  \sqrt{t-s} \\ 
\leq&  C\,|t-s|  + C (|K|+ \Delta x)  \sqrt{t-s}.
\end{align*}
Considering a non-negative continuous function $h(t,s)$ on $[0,T]^2$, one has
\begin{align*}
\int_0^T\int_0^T \, & \E\Big[\int_{K} |u_{\Delta x}(t,x)-u_{\Delta x}(s,x)| dx\Big]  h(t,s)\,ds\,dt
\\ \leq &
C\int_0^T\int_0^T |t-s|\,h(t,s)\,ds\,dt  + C (|K|+ \Delta x) \int_0^T\int_0^T \sqrt{|t-s|}\,h(t,s)\,ds\,dt,
\end{align*}
and passing to the limit in $\Delta x$ yields
\begin{align*}
\int_0^T\int_0^T \, & \E\Big[\int_{K} |u(t,x)-u_(s,x)| dx\Big]  h(t,s)\,ds\,dt\\ 
\qquad \leq & C\int_0^T\int_0^T |t-s|\,h(t,s)\,ds\,dt  + C (|K|+ \Delta x) \int_0^T\int_0^T \sqrt{|t-s|}\,h(t,s)\,ds\,dt.
\end{align*}
The conclusion holds by setting $h(t,s)=\rho_{\delta_0}(t-s)$.
\end{proof}

\section{Proof of the Main Theorem}
\label{sec:main-thm}
In the previous section, we have mentioned that under the assumptions \ref{A1}-\ref{A6}, the approximate solutions $u_{\Delta x}(t,x)$ converges to the unique BV entropy solution $u(t,x)$ in $L_{\text{loc}}^p(\R; L^p (\Omega \times (0,T))$, for $1\le p < \infty$. In this section, our main aim is to estimate the $L^1$ difference between the approximate solution $u_{\Delta x}(t,x)$ and the unique entropy solution of \eqref{eq:stoc_con_laws}. To do this, we begin by introducing a class of entropy functions which will play a crucial role in the sequel.
Let $\beta:\R \rightarrow \R$ be a $C^\infty$ function satisfying 
\begin{align*}
      \beta(0) = 0,\quad \beta(-r)= \beta(r),\quad 
      \beta^\prime(-r) = -\beta^\prime(r),\quad \beta^{\prime\prime} \ge 0,
\end{align*} 
and 
\begin{align*}
	\beta^\prime(r)=\begin{cases} -1, &\quad \text{when} ~ r\le -1,\\
                               \in [-1,1], & \quad\text{when}~ |r|<1,\\
                               +1,& \quad \text{when} ~ r\ge 1.
                 \end{cases}
\end{align*} 
For any $\xi> 0$, define $\beta_\xi:\R \rightarrow \R$ by 
$\beta_\xi(r) = \xi \beta(\frac{r}{\xi})$. 
Then
\begin{align}\label{eq:approx to abosx}
 |r|-M_1\xi \le \beta_\xi(r) \le |r|, \quad 
 \text{and} \,\, \,\,|\beta_\xi^{\prime\prime}(r)| 
 \le \frac{M_2}{\xi} {\bf 1}_{|r|\le \xi},
\end{align} 
where $M_1 = \sup_{|r|\le 1}\big | |r|-\beta(r)\big |$ and 
$M_2 = \sup_{|r|\le 1}|\beta^{\prime\prime} (r)|$.
By simply dropping $\xi$, for $\beta= \beta_\xi$, ~ we define 
\begin{align*}
f^\beta(a,b)=\int_{b}^a \beta^\prime(r-b)f^\prime(r)\,dr,\quad \text{and}\quad 
{\bf f}(a,b)= \text{sign}(a-b)\big(f(a)-f(b)\big).
\end{align*} 

We also introduce a special class of functions, which plays a pivotal 
role in our analysis. To that context, let us define the set $\mathcal{K}$ consisting of non-zero 
$\zeta \in C^2(\R)\cap L^1(\R)\cap L^{\infty}(\R)$ for which there is a constant $C$ such that 
$| \zeta_x(x)| \le C \zeta(x)$, and $|\zeta_{xx}(x)| \le C \zeta(x)$. Then we have the following Lemma:

\begin{lem}
\label{imp}
Let $\zeta \in \mathcal{K}$ be any element. 
Then there exists ${\lbrace \zeta_R \rbrace}_{R>1} \subset C_c^{\infty}(\R)$ such that
\begin{align*}
\zeta_R \mapsto \zeta,  (\zeta_R)_x \mapsto  \zeta_x, \,\,\text{and}\,\,  
(\zeta_R)_{xx} \mapsto \zeta_{xx}\,\,\text{pointwise in}\,\, \R^d, \,\, \text{as}\,\, R \mapsto \infty
\end{align*}
\end{lem}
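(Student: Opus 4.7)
The plan is to obtain $\zeta_R$ by applying a smooth truncation followed by a mollification, exploiting that only \emph{pointwise} convergence of $\zeta$ together with its first two derivatives is required (rather than convergence in any norm that would have to be quantified against the structural bounds $|\zeta_x|\le C\zeta$ and $|\zeta_{xx}|\le C\zeta$).

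First I would fix a cutoff $\phi\in C_c^{\infty}(\R)$ with $0\le\phi\le 1$, $\phi\equiv 1$ on $[-1,1]$ and $\supp\phi\subset[-2,2]$, and set $\phi_R(x):=\phi(x/R)$. Then $\widetilde\zeta_R:=\phi_R\zeta\in C_c^{2}(\R)$ has support in $[-2R,2R]$. The crucial observation is that for any fixed $x\in\R$ and every $R>2|x|$, $\phi_R$ is identically equal to $1$ on an open neighborhood of $x$, so that
\[
\widetilde\zeta_R(x)=\zeta(x),\qquad (\widetilde\zeta_R)_x(x)=\zeta_x(x),\qquad (\widetilde\zeta_R)_{xx}(x)=\zeta_{xx}(x)
\]
for all sufficiently large $R$. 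The truncation step alone already produces pointwise stabilization, and no quantitative decay of the derivatives of $\phi_R$ needs to be exploited.

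To upgrade regularity from $C^2$ to $C^\infty$ while keeping compact support, I would fix a standard non-negative mollifier $\rho\in C_c^{\infty}(\R)$ with $\int\rho=1$ and $\supp\rho\subset[-1,1]$, write $\rho_\eps(x):=\eps^{-1}\rho(x/\eps)$, and define
\[
\zeta_R := \rho_{1/R}\ast\widetilde\zeta_R \in C_c^{\infty}(\R),
\]
with support contained in $[-2R-1/R,\,2R+1/R]$. For fixed $x$ and every $R$ large enough that $R>2|x|+1$, the function $\widetilde\zeta_R$ coincides with $\zeta$ on a neighborhood of $x$ that strictly contains the translated support of $\rho_{1/R}$, hence
\[
\zeta_R(x)=(\rho_{1/R}\ast\zeta)(x),\quad (\zeta_R)_x(x)=(\rho_{1/R}\ast\zeta_x)(x),\quad (\zeta_R)_{xx}(x)=(\rho_{1/R}\ast\zeta_{xx})(x),
\]
where the derivatives are moved locally onto $\zeta$ using that $\zeta\in C^2$ in this neighborhood. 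The continuity of $\zeta$, $\zeta_x$, and $\zeta_{xx}$ at $x$, combined with the approximation-of-identity property of $\rho_{1/R}$, then yields the desired pointwise convergence as $R\to\infty$.

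There is no genuine obstacle here; the construction is a routine cutoff-plus-mollification argument. The only point to get right is the matching of scales so that, near each fixed $x$, the mollification never sees the transition region of $\phi_R$. This is automatic because the cutoff stabilization radius grows like $R$ while the mollification radius $1/R$ shrinks; consequently, once $R>2|x|+1$, the mollified function agrees with $\rho_{1/R}\ast\zeta$ at $x$, and everything reduces to a classical fact about convolutions with mollifiers.
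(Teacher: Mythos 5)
Your construction is correct and is essentially the paper's own argument: both proofs multiply $\zeta$ by a rescaled cutoff $\phi(\cdot/R)$ and handle the upgrade from $C^{2}$ to $C^{\infty}$ by a mollification step (which the paper compresses into the single remark ``modulo a mollification step, we can assume $\zeta\in C^{\infty}$''). The only cosmetic difference is that you take the cutoff identically equal to $1$ near the origin and conclude by exact local agreement of $\zeta_R$ with $\rho_{1/R}\ast\zeta$, whereas the paper only assumes $\eta(0)=1$ and reads the pointwise limits off the product-rule expansion, whose extra terms carry factors $1/R$ and $1/R^{2}$.
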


\begin{proof}
Note that, modulo a mollification step, we can assume that $\zeta \in C^{\infty}(\R)$. 
Let $\eta \in C_c^{\infty}(\R)$ be such that $0\le \eta \le 1$, and $\eta(0)=1$. 
Let us define $\zeta_R(x)= \zeta(x)\eta(x/R)$. Then a straightforward computation yields
\begin{align*}
(\zeta_R)_x(x) &=  \zeta_x(x) \eta(x/R) + \frac1R \zeta(x)  \eta_x(x/R), \\
(\zeta_R)_{xx}(x) &= \zeta_{xx}(x) \eta(x/R) + \frac{1}{R^2} \zeta(x) \eta_{xx}(x/R)
+  \frac2R \zeta_x(x) \eta_x(x/R).
\end{align*}
Taking limit as $R \mapsto \infty$ concludes the proof.
\end{proof}

To proceed further, let $\rho$ and $\varrho$ be the standard mollifiers on $\R$ such that 
$\supp(\rho) \subset [-1,0)$ and $\supp(\varrho) = [-1,1]$. For $\delta > 0$ and $\delta_0 > 0$, 
let $\rho_{\delta_0}(r) = \frac{1}{\delta_0}\rho(\frac{r}{\delta_0})$ and
$\varrho_{\delta}(x) = \frac{1}{\delta}\varrho(\frac{x}{\delta})$. Let 
$\psi(x)\in \mathcal{K}:= \{\psi: | \psi_x| \le C \psi(x), |\psi_{xx}| \le C \psi(x)\}$ be any function. 
Then by Lemma~\ref{imp}, there exists a sequence of functions 
$\lbrace \psi_R \rbrace \subset C_c^{\infty}(\R)$ such that $\psi_R \mapsto \psi$ pointwise.
In the sequel, with a slight abuse of notations, we donte $\psi=\psi_R$. In what follows, 
we will use the following test function: For two positive constants $\delta, \delta_0 $, we define            
 \begin{align}
\label{eq:doubled-variable} \phi_{\delta,\delta_0}(t,x, s,y) = \rho_{\delta_0}(t-s) \,\varrho_{\delta}(x-y)\, \psi(s,y). 
\end{align} 

Furthermore, let $J$ be the standard symmetric 
nonnegative mollifier on $\R$ with support in $[-1,1]$ and $J_l(r)= \frac{1}{l}J(\frac{r}{l})$ 
for $l > 0$. Let $u(t,x)$ be a BV entropy solution of the problem \eqref{eq:stoc_con_laws}. We write down the entropy inequality for $u(t,x)$, based on the entropy pair $\big(\beta(\cdot-k), f^\beta(\cdot, k)\big)$, and then multiply by $J_l(u_{\Delta y}(s,y)-k)$, integrate with respect to $ s, y, k$ and take the expectation. The result is
\begin{align}
0\le  & \E \Big[\int_{\Pi_T}\int_{\R^2} \beta \big(u_0(x)-k\big)
\phi_{\delta,\delta_0}(0,x,s,y) J_l(u_{\Delta y}(s,y)-k)\,dk \,dx\,dy\,ds\Big] \notag \\
 & + \E \Big[\int_{\Pi_T} \int_{\Pi_T} \int_{\R} \beta(u(t,x)-k)\partial_t \phi_{\delta,\delta_0}(t,x,s,y)
J_l(u_{\Delta y}(s,y)-k)\,dk \,dx\,dt\,dy\,ds \Big]\notag \\ 
 & +  \E \Big[ \int_{\Pi_T} \int_{\R}\int_{\Pi_T}  \sigma(u(t,x)) \beta^{\prime}(u(t,x)-k)\phi_{\delta,\delta_0}(t,x,s,y)
 \,J_l(u_{\Delta y}(s,y)-k) \,dW(t) \,dx \,dk \,dy\,ds \Big] \notag\\
& + \frac{1}{2}  \E \Big[\int_{\Pi_T} \int_{\Pi_T}
\int_{\R}  \sigma^2(u(t,x)) \beta^{\prime \prime}(u(t,x)-k)\phi_{\delta,\delta_0}(t,x,s,y)
 \,J_l(u_{\Delta y}(s,y)-k) \,dt \,dx \,dk \,dy\,ds \Big] \notag
 \\& +  
 \E \Big[\int_{\Pi_T}\int_{\Pi_T} \int_{\R} 
 f^\beta(u(t,x),k)\partial_x \phi_{\delta,\delta_0}(t,x,s,y) J_l(u_{\Delta y}(s,y)-k)\,dk\,dx\,dt\,dy\,ds\Big] \notag \\
 & +  \E \Big[ \int_{\Pi_T} \int_{\R}\int_{\Pi_T} \int_{|z|>0} \int_0^1 \eta(u(t,x);z) \beta^{\prime}\big(u(t,x)-k + \lambda \eta(u(t,x);z)\big) 
 \phi_{\delta,\delta_0}(t,x,s,y)  \notag \\
 & \hspace{4cm}  \times J_l(u_{\Delta y}(s,y)-k)\,d\lambda\, \widetilde{N}(dz,dt) \,dx \,dk \,dy\,ds \Big] \notag\\
 & +  \E \Big[ \int_{\Pi_T} \int_{\R}\int_{\Pi_T} \int_{|z|>0} \int_0^1 (1-\lambda)\eta^2(u(t,x);z) \beta^{\prime\prime}\big(u(t,x)-k + \lambda \eta(u(t,x);z)\big) 
 \phi_{\delta,\delta_0}(t,x,s,y) \notag \\
 & \hspace{4cm} \times J_l(u_{\Delta y}(s,y)-k)\,d\lambda\,m(dz)\,dt \,dx \,dk \,dy\,ds \Big] \notag\\
& :=  I_1 + I_2 + I_3 +I_4 + I_5 + I_6 + I_7. \label{stoc_entropy_1}
\end{align}

By notation, for any $y \in (y_{j-\frac12},y_{j+\frac12})$, 
\begin{align*}
 du_{\Delta y}(s,y) + \frac{1}{\Delta y}  \Big[f(u_{\Delta y}(s,y+\Delta y)) & -f(u_{\Delta y}(s,y))\Big] \notag \\
&= \sigma(u_{\Delta y}(s,y))\,dW(s) + \int_{|z|>0}
\eta(u_{\Delta y}(s,y);z)\,\widetilde{N}(dz,ds),
\end{align*}
and applying It\^{o}-L\'{e}vy product rule to $\beta(u_{\Delta y}(s,y))\phi_{\delta,\delta_0}(t,x,s,y)$, integrating with respect to $y,t,x,k$ and taking the expectation yield
\begin{align}
0\le  & \E \Big[ \int_{\Pi_T} \int_{\R} \int_\R \beta \big(u_{\Delta y}(0)-k\big)
\phi_{\delta,\delta_0}(t,x,0,y) J_l(u(t,x)-k)\,dk\,dy \,dx\,ds\Big] \notag 
\\ & + 
\E \Big[\int_{\Pi_T}\int_{\Pi_T}  \int_{\R}  \beta(u_{\Delta y}(s,y)-k)\partial_s \phi_{\delta,\delta_0}(t,x,s,y)
J_l(u(t,x)-k)\,dk \,dx\,dt\,dy\,ds \Big]\notag 
\\  & +  
\E \Big[ \int_{\Pi_T}\int_{\Pi_T} \int_{\R}  \sigma(u_{\Delta y}(s,y)) \beta^{\prime}(u_{\Delta y}(s,y)-k)\phi_{\delta,\delta_0}(t,x,s,y)
 \,J_l(u(t,x)-k) \,dk \,dt \,dx \,dy\,dW(s) \Big] \notag
 \\& + 
 \frac{1}{2}  \E \Big[\int_{\Pi_T}\int_{\Pi_T} \int_{\R}  \sigma^2(u_{\Delta y}(s,y)) \beta^{\prime\prime}(u_{\Delta y}(s,y)-k)
\phi_{\delta,\delta_0}(t,x,s,y)\,J_l(u(t,x)-k) \,dk\,dt \,dx \,dy\,ds \Big] \notag
\\& -  
\E \Big[\int_{\Pi_T} \int_{\Pi_T}\int_{\R}  \beta^{\prime}(u_{\Delta y}(s,y)-k) \frac{1}{\Delta y}\Big\{ f(u_{\Delta y}(s,y+\Delta y))-f(u_{\Delta y}(s,y))\Big\}
  \phi_{\delta,\delta_0}(t,x,s,y) \notag
  \\ &\hspace{9 cm} \times J_l(u(t,x)-k)\,dk\,ds\,dy\,dx\,dt\Big] \notag \\
  & + \E \Big[\int_{\Pi_T} \int_{\Pi_T}\int_{|z|>0} \int_{\R} \int_{0}^1 \eta(u_{\Delta y}(s,y);z) \beta^{\prime}\big(u_{\Delta y}(s,y)-k + \lambda 
  \eta(u_{\Delta y}(s,y);z)\big)\phi_{\delta,\delta_0}(t,x,s,y) \notag \\
  & \hspace{5cm}  \times \,J_l(u(t,x)-k)\,d\lambda \,dk \,\widetilde{N}(dz,ds)\,dt \,dx \,dy \Big] \notag \\
   &+\E \Big[\int_{\Pi_T} \int_{\Pi_T}\int_{|z|>0} \int_{\R} \int_{0}^1 (1-\lambda)\eta^2(u_{\Delta y}(s,y);z) \beta^{\prime\prime}\big(u_{\Delta y}(s,y)-k + \lambda 
  \eta(u_{\Delta y}(s,y);z)\big)\phi_{\delta,\delta_0}(t,x,s,y) \notag \\
  & \hspace{5cm}  \times \,J_l(u(t,x)-k)\,d\lambda \,dk \,m(dz)\,ds\,dt \,dx \,dy \Big] \notag \\
& := J_1 + J_2 + J_3 + J_4 + J_5 + J_6 + J_7. \label{stoc_entropy_2}
\end{align}
Our goal is to estimate the expected value of the $L^1$ difference between $u_{\Delta y}$ and $u$ in terms of the small parameters $\xi,\delta_0,\delta, l, \Delta y$, which are sufficiently small but fixed. 

Let us first consider the terms due to initial data. Note that, since $\supp(\rho_{\delta_0}) \subset [-\delta_0,0]$, $J_1=0$. Now, in view of the uniform moment estimate 
\eqref{uni:moment}, we infer that
\begin{align}
I_1 & = \E \Big[\int_{\Pi_T}\int_{\R^2} \beta \big(u_0(x)-k\big)
\phi_{\delta,\delta_0}(0,x,s,y) J_l(u_{\Delta y}(s,y)-k)\,dk \,dx\,dy\,ds\Big]
\\&= 
\E\Big[\int_{\R^3} \beta(u_0(x)-k) \varrho_{\delta}(x-y)\psi(0,y) J_l(u_{\Delta y}(0,y)-k)\,dk\,dy\,dx\Big] 
\notag \\ &\quad + \E\Big[\int_{\Pi_T}\int_{\R^2} \beta \big(u_0(x)-u_{\Delta y}(s,y)+k\big) \varrho_{\delta}(x-y)\Big(\psi(s,y)-\psi(0,y)\Big)\rho_{\delta_0}(-s) J_l(k)\,dk\,dy\,dx\,ds\Big] \notag \\
&\qquad  + \E\Big[\int_{\Pi_T}\int_{\R^2}  \Big\{\beta \big(u_0(x)-u_{\Delta y}(s,y)+k\big)- \beta \big(u_0(x)-u_{\Delta y}(0,y)+k\big)\Big\} \varrho_{\delta}(x-y)\psi(0,y) \notag \\
& \hspace{10cm} \times \rho_{\delta_0}(-s) J_l(k)\,dk\,dy\,dx\,ds\Big]. \notag 
\end{align}
Since $\supp(\psi(s,\cdot))\subset K$, where $K$ is a compact subset of $\R$, we have 
\begin{align}
 I_1 & \le  \E\Big[\int_{\R^3} \beta(u_0(x)-k) \varrho_{\delta}(x-y)\psi(0,y) J_l(u_{\Delta y}(0,y)-k)\,dk\,dy\,dx\Big] \notag \\
& \quad + C\, \E\Big[\int_{\Pi_T}\int_{\R^2} \mathds{1}_{K}(y)\big|u_0(x)-u_{\Delta y}(s,y)+k\big| \varrho_{\delta}(x-y)s\rho_{\delta_0}(-s) J_l(k)\,dk\,dy\,dx\,ds\Big] \notag \\
& \qquad +  \E\Big[\int_{\Pi_T}\int_{\R}  \big|u_{\Delta y}(s,y) - u_{\Delta y}(0,y)\big| \varrho_{\delta}(x-y)\psi(0,y) \rho_{\delta_0}(-s)\,dy\,dx\,ds\Big] \notag \\
& \le  \E\Big[\int_{\R^3} \beta(u_0(x)-k) \varrho_{\delta}(x-y)\psi(0,y) J_l(u_{\Delta y}(0,y)-k)\,dk\,dy\,dx\Big] \notag \\
& \qquad + C\, \E\Big[\int_{\Pi_T}\int_{\R}\mathds{1}_{K}(y)\big|u_0(x)-u_{\Delta y}(s,y)\big| \varrho_{\delta}(x-y)s\rho_{\delta_0}(-s)\,dy\,dx\,ds\Big] \notag \\
& \qquad \quad  + C\, l\, \E\Big[\int_{\Pi_T}\int_{\R} \mathds{1}_{K}(y) \varrho_{\delta}(x-y)s\rho_{\delta_0}(-s)\,dy\,dx\,ds\Big] \notag \\
& \qquad \qquad  +  C\, ||\psi(0,\cdot)||_{L^\infty(\R)}\, \E\Big[\int_{0}^T \int_{\supp(\psi(0,\cdot))} \big|u_{\Delta y}(s,y) - u_{\Delta y}(0,y)\big| \rho_{\delta_0}(-s)\,dy\,dx\,ds\Big] \notag \\
& \le  \E\Big[\int_{\R^3} \beta(u_0(x)-k) \varrho_{\delta}(x-y)\psi(0,y) J_l(u_{\Delta y}(0,y)-k)\,dk\,dy\,dx\Big]  + C\delta_0(1+l)\notag \\
& \hspace{3cm}+ C\,  \E\Big[\int_{0}^T \int_{\supp(\psi(0,\cdot))} \big|u_{\Delta y}(s,y) - u_{\Delta y}(0,y)\big| \rho_{\delta_0}(-s)\,dy\,ds\Big] \notag \\
& := I_{1,1} +  C\,\delta_0\,(1+l) + C\,\sqrt{\delta_0},
\label{esti:i1-half}
\end{align}
thanks to Lemma~\ref{continuity in time}. Moreover, since
\begin{align}
 I_{1,1}& \le \Big|  \E\Big[\int_{\R^3} \Big\{\beta \big(u_0(x)-u_{\Delta y}(0,y) + k\big)- \beta \big(u_0(x)-u_{\Delta y}(0,y)\big)\Big\} \varrho_{\delta}(x-y)
 \psi(0,y) J_l(k)\,dk\,dy\,dx\Big]\Big|\notag \\
 & \hspace{2cm} + \Big[\int_{\R^2} \beta\big(u_0(x)-u_{\Delta y}(0,y)\big) \varrho_{\delta}(x-y)\psi(0,y)\,dy\,dx\Big] \notag \\
& \le  \E\Big[\int_{\R^2} \beta\big(u_0(x)-u_{\Delta y}(0,y)\big) \varrho_{\delta}(x-y)\psi(0,y)\,dy\,dx\Big]  + C\,l,\label{esti:i11}
\end{align}
combining \eqref{esti:i11} and \eqref{esti:i1-half}, keeping in mind that $J_1=0$, we obtain 
\begin{align}
I_1 +J_1 \le  \E\Big[\int_{\R^2} \beta\big(u_0(x)-u_{\Delta y}(0,y)\big) \varrho_{\delta}(x-y)\psi(0,y)\,dy\,dx\Big] +  C\big(\delta_0 + l\big) + C \sqrt{\delta_0}. \label{esti:i1+j1-final}
\end{align}
Collecting the results above, we have proved the following Lemma.
\begin{lem}\label{I1+J1}
\begin{align*}
\limsup_{l \rightarrow 0}\,\limsup_{\delta_0 \rightarrow 0} \,(I_1 +J_1) \le  E\Big[\int_{\R^2} \beta\big(u_0(x)-u_{\Delta y}(0,y)\big) \varrho_{\delta}(x-y)\psi(0,y)\,dy\,dx\Big].
\end{align*}
\end{lem}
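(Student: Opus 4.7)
The plan is to reduce the statement to the already-derived inequality \eqref{esti:i1+j1-final} by a telescoping argument and then pass to the iterated limit. First, $J_1 = 0$ is immediate: since $\supp\rho_{\delta_0} \subset [-\delta_0, 0)$, the factor $\rho_{\delta_0}(t-0) = \rho_{\delta_0}(t)$ in $\phi_{\delta,\delta_0}(t,x,0,y)$ vanishes for every $t > 0$, so the outer integration over $t \in (0,T)$ kills that term. Only $I_1$ requires real work.

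For $I_1$ I would telescope in two stages. First split $\psi(s,y) = \psi(0,y) + (\psi(s,y) - \psi(0,y))$ to isolate the time dependence of the test function, then perform the change of variable $k \mapsto u_{\Delta y}(s,y) - k$ and split $u_{\Delta y}(s,y) = u_{\Delta y}(0,y) + (u_{\Delta y}(s,y) - u_{\Delta y}(0,y))$ inside the argument of $\beta$ using $|\beta(a) - \beta(b)| \leq |a-b|$. This produces a principal term $I_{1,1}$ with integrand $\beta(u_0(x) - u_{\Delta y}(0,y) + k)\varrho_\delta(x-y)\psi(0,y)J_l(k)\rho_{\delta_0}(-s)$ plus two error pieces. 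The first, involving $\psi(s,y) - \psi(0,y)$, is bounded by $C\int_0^{\delta_0} s\,\rho_{\delta_0}(-s)\,ds = O(\delta_0)$ using smoothness of $\psi$ in time combined with $\int |k| J_l(k)\,dk = O(l)$ to absorb the $k$-translation. The second, involving $u_{\Delta y}(s,y) - u_{\Delta y}(0,y)$ integrated against $\rho_{\delta_0}(-s)$ over the compact set $\supp(\psi(0,\cdot))$, is exactly the quantity controlled by Lemma~\ref{continuity in time}(a), which delivers an $O(\sqrt{\delta_0})$ bound. A final telescoping of $I_{1,1}$, using $|\beta(a+k) - \beta(a)| \leq |k|$ and $\int |k| J_l(k)\,dk \leq Cl$, trades $I_{1,1}$ for the target expression plus an $O(l)$ error. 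Assembling these estimates yields \eqref{esti:i1+j1-final}.

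The main obstacle, and the reason the limits must be iterated in the stated order, is the second error piece: over a boundary layer of size $\delta_0$ in time, a pointwise bound on $|u_{\Delta y}(s,y) - u_{\Delta y}(0,y)|$ would only be $O(1)$. The average time-continuity estimate of Lemma~\ref{continuity in time}(a) supplies the $\sqrt{\delta_0}$ rate—itself relying on the BV and moment bounds derived earlier—and the order $\delta_0 \to 0$ before $l \to 0$ is forced because the residual $O(l)$ term is $\delta_0$-free. After both limits are taken in this order, the $O(\delta_0 + \sqrt{\delta_0})$ and $O(l)$ remainders vanish, leaving exactly the inequality claimed in the lemma.
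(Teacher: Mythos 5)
Your proposal is correct and follows essentially the same route as the paper: $J_1=0$ from the support of $\rho_{\delta_0}$, then the same three-way telescoping of $I_1$ (in the time increment of $\psi$, in the time increment of $u_{\Delta y}$, and in the $J_l$-smoothing variable $k$), with the $\sqrt{\delta_0}$ error supplied by the average time-continuity lemma and the $O(l)$ error from $\int |k|J_l(k)\,dk$, followed by the iterated limits. The only cosmetic slip is that the estimate actually invoked from Lemma~\ref{continuity in time} is the pointwise-in-time bound (its first display, with one time fixed at $0$ and integrated against $\rho_{\delta_0}(-s)$) rather than item $(a)$, but this does not affect the argument.
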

\medskip

Let us tern our attention to the term $(I_2 +J_2)$. 
First note that, since $\beta$ and $J_l$ are even functions, we have 
\begin{align}
I_2 + J_{2}&= \E\Big[\int_{\Pi_T}\int_{\R}\int_{\Pi_T}  \beta(u_{\Delta y}(s,y)-k) \partial_s \psi(s,y) \rho_{\delta_0}(t-s)
\varrho_{\delta}(x-y)J_l(u(t,x)-k) \,ds\,dy\,dk \,dx\,dt \Big].\notag \\
&=  \E\Big[\int_{\Pi_T}\int_{\R^2}  \beta(u_{\Delta y}(s,y)-k) \partial_s \psi(s,y)
\varrho_{\delta}(x-y)J_l(u(s,x)-k) \,ds\,dy\,dk \,dx \Big]\notag \\
& -  \E\Big[\int_{\Pi_T}\int_{\R^2} \beta\big(u_{\Delta y}(s,y)-u(s,x)+k\big) \partial_s \psi(s,y) \big(1-\int_{0}^T \rho_{\delta_0}(t-s)\,dt\big)
\varrho_{\delta}(x-y) J_l(k) \,ds\,dy\,dk \,dx\Big]\notag \\
&\quad  +  \E\Big[\int_{\Pi_T}\int_{\R}\int_{\Pi_T} \Big\{\beta\big(u_{\Delta y}(s,y)-u(t,x)+k\big) 
- \beta\big(u_{\Delta y}(s,y)-u(s,x)+k\big)\Big\} \notag \\
& \hspace{7cm} \times \partial_s \psi(s,y) \rho_{\delta_0}(t-s) \varrho_{\delta}(x-y)J_l(k) \,ds\,dy\,dk \,dx\,dt \Big]\notag \\
& \le  \E\Big[\int_{\Pi_T}\int_{\R^2} \beta(u_{\Delta y}(s,y)-k) \partial_s \psi(s,y)
\varrho_{\delta}(x-y)J_l(u(s,x)-k) \,ds\,dy\,dk \,dx \Big]\notag \\ 
& \quad +  \E\Big[\int_{\Pi_T}\int_{\Pi_T}\big|u(t,x)-u(s,x)\big| |\partial_s \psi(s,y)| \rho_{\delta_0}(t-s)
\varrho_{\delta}(x-y)\,ds\,dy \,dx\,dt \Big]\notag \\
& \qquad +  \E\Big[\int_{s=0}^{\delta_0}\int_{\R^3} \big| u_{\Delta y}(s,y)-u(s,x)+k \big| |\partial_s \psi(s,y)|
\varrho_{\delta}(x-y) J_l(k)\,dk\,dx\,dy\,ds\Big] \notag \\
& \le  \E\Big[\int_{\Pi_T}\int_{\R^2}\beta(u_{\Delta y}(s,y)-k) \partial_s \psi(s,y)
\varrho_{\delta}(x-y)J_l(u(s,x)-k) \,ds\,dy\,dk \,dx \Big] + C\,\delta_0\big( 1+ l\big)\notag \\ 
& \quad +  \E\Big[\int_{\Pi_T}\int_{\Pi_T}\big|u(t,x)-u(s,x)\big| |\partial_s \psi(s,y)| \rho_{\delta_0}(t-s)
\varrho_{\delta}(x-y)\,ds\,dy \,dx\,dt \Big]\notag \\
& := J_{2,1}^{1,I_2} +  C\,\delta_0\big( 1+ l\big) + J_{2,1}^{2,I_2}. \label{i2+j21-half} 
\end{align}
Let us consider the term $J_{2,1}^{1,I_2}$. We have 
\begin{align}
J_{2,1}^{1,I_2} & \le   \E\Big[\int_{\Pi_T}\int_{\R} \beta \big(u_{\Delta y}(s,y)-u(s,x)\big) \partial_s \psi(s,y)
\varrho_{\delta}(x-y)\,dx\,dy\,ds \Big]\notag \\ 
& \quad + \Bigg| \E\Big[\int_{\Pi_T}\int_{\R^2} \Big\{  \beta \big(u_{\Delta y}(s,y)-u(s,x) +k \big)- \beta \big(u_{\Delta y}(s,y)-u(s,x)\big)\Big\}
\partial_s \psi(s,y)\notag \\
& \hspace{8cm} \times\varrho_{\delta}(x-y) J_l(k) \,dk\,dx\,dy \,ds \Big]\Bigg|\notag \\ 
& \le  \E\Big[\int_{\Pi_T}\int_{\R} \beta \big(u_{\Delta y}(s,y)-u(s,x)\big) \partial_s \psi(s,y)
\varrho_{\delta}(x-y)\,dx\,dy\,ds \Big] +  C\, l. \label{esti:j21-1i2}
\end{align}
Let us move on to estimate the term $J_{2,1}^{2,I_2}$: 
\begin{align}
 J_{2,1}^{2,I_2} & \le \|\partial_s \psi\|_{L^\infty(\Pi_T)} \E \Big[ \int_{\Pi_T} \int_0^T \int_{\supp(\partial_s \psi(s,\cdot))} 
 \big|u(t,x)-u(s,x)| \rho_{\delta_0}(t-s) \varrho_{\delta}(x-y)\,ds\,dy \,dx\,dt \Big]\notag \\
 & \le C\, \E \Big[ \int_{0}^T \int_0^T \int_{\supp(\partial_s \psi(s,\cdot))} 
 \big|u(t,x)-u(s,x)|dx \rho_{\delta_0}(t-s)\,ds\,dt \Big]  \le C \sqrt{\delta_0}. \label{esti:j21-2i2}
\end{align}
Thus, combining \eqref{esti:j21-1i2} and \eqref{esti:j21-2i2} in \eqref{i2+j21-half}, we obtain
\begin{align}
    I_2 + J_2 & \le  E\Big[\int_{\Pi_T}\int_{\R} \beta \big(u_{\Delta y}(s,y)-u(s,x)\big) \partial_s \psi(s,y)
\varrho_{\delta}(x-y)\,dx\,dy\,ds \Big] +  C\,\delta_0\big( 1+ l\big) + C \sqrt{\delta_0}.\label{esti:i2+j21}
\end{align}
In fact, we have proved the following Lemma.
\begin{lem}\label{I2+J2}
\begin{align*}
\limsup_{l \rightarrow 0}\,\limsup_{\delta_0 \rightarrow 0} \, (I_2 + J_2) \leq \E\Big[\int_{\Pi_T}\int_{\R} \beta \big(u_{\Delta y}(s,y)-u(s,x)\big) \partial_s \psi(s,y)
\varrho_{\delta}(x-y)\,dx\,dy\,ds \Big].
\end{align*}
\end{lem}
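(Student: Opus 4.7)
The plan is to exploit the key cancellation
\begin{equation*}
\partial_t \phi_{\delta,\delta_0} + \partial_s \phi_{\delta,\delta_0} = \rho_{\delta_0}(t-s)\varrho_\delta(x-y)\partial_s\psi(s,y),
\end{equation*}
so that adding $I_2$ and $J_2$ removes the singular factor $\rho'_{\delta_0}$ and leaves only the benign factor $\partial_s\psi$. The obstacle to a direct summation is that $I_2$ and $J_2$ do not share a common integrand: $I_2$ carries $\beta(u(t,x)-k)J_l(u_{\Delta y}(s,y)-k)$ while $J_2$ carries $\beta(u_{\Delta y}(s,y)-k)J_l(u(t,x)-k)$. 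The first step is therefore to symmetrize the $k$-integrals. Applying the change of variables $k\mapsto u(t,x)+u_{\Delta y}(s,y)-k$ and invoking the evenness of both $\beta$ and $J_l$, the two $k$-integrands coincide, so that $I_2+J_2$ reduces to the single expression
\begin{equation*}
\E\Big[\int_{\Pi_T}\int_{\Pi_T}\int_\R \beta(u_{\Delta y}(s,y)-k)J_l(u(t,x)-k)\rho_{\delta_0}(t-s)\varrho_\delta(x-y)\partial_s\psi(s,y)\,dk\,dx\,dt\,dy\,ds\Big].
\end{equation*}

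The second step is to collapse the $t$-integral against $\rho_{\delta_0}(t-s)$ onto $t=s$. I would add and subtract the value of $J_l(u(s,x)-k)$ inside the integrand; the resulting error is controlled by $\E\int\cdots|u(t,x)-u(s,x)|\rho_{\delta_0}(t-s)\cdots$ using the uniform bounds on $\beta$, $J_l$, $\varrho_\delta$, and $\partial_s\psi$, and by Lemma \ref{continuity in time}(b) this contributes $O(\sqrt{\delta_0})$. A separate boundary term of order $O(\delta_0(1+l))$ arises on $\{s\in[0,\delta_0]\}$ where $\int_0^T\rho_{\delta_0}(t-s)\,dt<1$; it is controlled using the compact support of $\psi$ together with the uniform $L^\infty$ bound on $u_{\Delta y}$ from Lemma \ref{imp}.

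The third step is to send $l\to 0$. With all occurrences of $u$ now at time $s$, the inner $k$-integral becomes $\int_\R\beta(u_{\Delta y}(s,y)-u(s,x)+k')J_l(k')\,dk'$ after a translation. Since $\beta$ is globally $1$-Lipschitz by construction of $\beta_\xi$ and $\int J_l = 1$ with $\supp J_l\subset[-l,l]$, this converges to $\beta(u_{\Delta y}(s,y)-u(s,x))$ with error $O(l)$. Taking $\limsup_{\delta_0\to 0}$ first and then $\limsup_{l\to 0}$ thus kills all three error contributions and delivers the claimed inequality.

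The main obstacle will be the symmetrization in Step 1: it is not obvious a priori that the asymmetric entropy-doubling terms can be added to yield a common $k$-integrand, and discovering the correct change of variables $k\mapsto u(t,x)+u_{\Delta y}(s,y)-k$ together with the joint evenness of $\beta$ and $J_l$ is the key insight. Once this is in place, Steps 2 and 3 are essentially bookkeeping applications of the a priori estimates established earlier, with the average time-continuity lemma doing the analytical heavy lifting.
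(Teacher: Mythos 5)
Your proposal is correct and follows essentially the same route as the paper: the paper likewise combines $I_2$ and $J_2$ into a single integrand using the evenness of $\beta$ and $J_l$ (your change of variables $k\mapsto u(t,x)+u_{\Delta y}(s,y)-k$ makes explicit the identity the paper states directly), then collapses $t\to s$ via the average time-continuity lemma with an $O(\sqrt{\delta_0})$ error, isolates the same $O(\delta_0(1+l))$ boundary contribution from $\{s\le\delta_0\}$, and removes the $k$-smearing with an $O(l)$ error. No substantive differences.
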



Next, we move on to estimate the terms coming from the associated flux function. To do this, we first consider the term $J_5$. 
Since $f^\prime \le 0$ and $\beta$ is a convex function, one has
\begin{align}
 0\ge   \int_{u_j}^{u_{j+1}} &\beta^{\prime\prime}(r-k)\big( f(u_{j+1})-f(r)\big)\,dr  \notag \\
  & = - \beta^{\prime}(u_j-k) \big(f(u_{j+1})- f(u_{j})\big) + \int_{u_j}^{u_{j+1}} \beta^{\prime}(r-k)f^\prime(r)\,dr.\label{inq:properties-flux}
\end{align}
Therefore, in view of \eqref{inq:properties-flux}, we have
\begin{align}
 J_5 & = -  
\E \Big[\int_{\Pi_T} \int_{\Pi_T}\int_{\R}  \beta^{\prime}(u_{\Delta y}(s,y)-k) \frac{1}{\Delta y}\Big\{ f(u_{\Delta y}(s,y+\Delta y))-f(u_{\Delta y}(s,y))\Big\}
  \phi_{\delta,\delta_0}(t,x,s,y) \notag
  \\ &\hspace{10cm} \times
  J_l(u(t,x)-k)\,dk\,ds\,dy\,dx\,dt\Big] \notag
 \\& \le 
 - \E \Big[\int_{\Pi_T}\int_{\Pi_T} \int_{\R}  \frac1{\Delta y}\Big(\int_{u_{\Delta y}(s,y)}^{u_{\Delta y}(s,y+\Delta y)} \beta^{\prime}(r-k)f^\prime(r)\,dr\Big)
  \phi_{\delta,\delta_0}(t,x,s,y) J_l(u(t,x)-k)\,dk\,ds\,dy\,dx\,dt\Big] \notag 
  \\&= 
  \E \Big[\int_{\Pi_T} \int_{\Pi_T} \int_{\R}\frac1{\Delta y} \Big\{ f^\beta \big(u_{\Delta y}(s,y),k\big)-f^\beta \big(u_{\Delta y}(s,y+\Delta y),k\big)\Big\}  \notag
  \\ &\hspace{8cm} \times
  \phi_{\delta,\delta_0}(t,x,s,y) J_l(u(t,x)-k)\,dk\,ds\,dy\,dx\,dt\Big] \notag 
  \\& =  
  \E \Big[\int_{\Pi_T}\int_{\Pi_T} \int_{\R}  f^\beta \big(u_{\Delta y}(s,y),k\big)\frac1{\Delta y} \Big\{\phi_{\delta,\delta_0}(t,x,s,y)-
  \phi_{\delta,\delta_0}(t,x,s,y-\Delta y)\Big\}  \notag
  \\ &\hspace{10cm} \times
  J_l(u(t,x)-k)\,dk\,ds\,dy\,dx\,dt\Big]\notag 
  \\&  =  
   \E \Big[\int_{\Pi_T}\int_{\Pi_T} \int_{\R} f^\beta \big(u_{\Delta y}(s,y),k\big) \partial_y \phi_{\delta,\delta_0}(t,x,s,y)
  J_l(u(t,x)-k) \,dk\,ds\,dy\,dx\,dt\Big]\notag \\
  & \quad +  \E \Bigg[\int_{\Pi_T} \int_{\Pi_T}\int_{\R}  f^\beta \big(u_{\Delta y}(s,y),k\big)
   \Bigg\{ \displaystyle \frac{\phi_{\delta,\delta_0}(t,x,s,y)-\phi_{\delta,\delta_0}(t,x,s,y-\Delta y)}{\Delta y} 
   -  \partial_y \phi_{\delta,\delta_0}(t,x,s,y) \Bigg\} \notag \\
   & \hspace{10cm} \times J_l(u(t,x)-k)\,dy \,ds\,dk\,dx\,dt\Bigg]\notag 
  \\ &
  := J_{5,1} + J_{5,2}.\label{eq:j5}
\end{align}
Note that
\begin{align*}
\displaystyle \frac{\phi_{\delta,\delta_0}(t,x,s,y)-\phi_{\delta,\delta_0}(t,x,s,y-\Delta y)}{\Delta y} 
   -  \partial_y \phi_{\delta,\delta_0}(t,x,s,y) = \frac{-1}{\Delta y}\int_{-\Delta y}^0\int_\sigma^0 \partial_{yy}\phi_{\delta,\delta_0}(t,x,s,\tau+y)\,d\tau\,d\sigma.
\end{align*}
Thus, 
\begin{align*}
&J_{5,2}:=\E \Big[\int_{\Pi_T} \int_{\Pi_T} \int_{\R}  f^\beta \big(u_{\Delta y}(s,y),k\big)
   \Bigg\{ \displaystyle \frac{\phi_{\delta,\delta_0}(t,x,s,y)-\phi_{\delta,\delta_0}(t,x,s,y-\Delta y)}{\Delta y} 
   -  \partial_y \phi_{\delta,\delta_0}(t,x,s,y) \Bigg\} \notag \\
   & \hspace{9cm} \times J_l(u(t,x)-k)\,dy \,ds\,dk\,dx\,dt\Big]
   \\ = 
&\E \Big[\int_{\Pi_T}\int_{\Pi_T} \sum_{j}  f^\beta \big(u_{j}(s),k\big)\int_{y_{j-\frac12}}^{y_{j+\frac12}}
   \Bigg\{ \frac{-1}{\Delta y}\int_{-\Delta y}^0\int_\sigma^0 \partial_{yy}\phi_{\delta,\delta_0}(t,x,s,\tau+y)\,d\tau\,d\sigma \Bigg\}\,dy \notag \\
   & \hspace{9cm} \times J_l(u(t,x)-k)\,ds\,dk\,dx\,dt\Big]   
   \\ = 
&\E \Big[\int_{\Pi_T} \int_{\Pi_T} \sum_{j}  f^\beta \big(u_{j}(s),k\big)
   \Bigg\{ \frac{-1}{\Delta y}\int_{-\Delta y}^0\int_\sigma^0 \bigg\{\partial_{y}\phi_{\delta,\delta_0}(t,x,s,\tau+y_{j+\frac12}) - \partial_{y}\phi_{\delta,\delta_0}(t,x,s,\tau+y_{j-\frac12})\bigg\}\,d\tau\,d\sigma \Bigg\} \notag \\
   & \hspace{9cm} \times J_l(u(t,x)-k)\,ds\,dk\,dx\,dt\Big]   
   \\ = 
&\E \Big[\int_{\Pi_T}\int_{\Pi_T} \sum_{j}  \Big\{f^\beta \big(u_{j+1}(s),k\big) - f^\beta \big(u_{j}(s),k\big)\Big\}
   \Bigg\{ \frac{1}{\Delta y}\int_{-\Delta y}^0\int_\sigma^0 \partial_{y}\phi_{\delta,\delta_0}(t,x,s,\tau+y_{j+\frac12}) \,d\tau\,d\sigma \Bigg\} \notag \\
   & \hspace{9cm} \times J_l(u(t,x)-k)\,ds\,dk\,dx\,dt\Big]   
   \\ \leq
&C\, \E \Big[\int_{\Pi_T} \int_{\Pi_T} \sum_{j}  \Big|f^\beta \big(u_{j+1}(s),k\big) - f^\beta \big(u_{j}(s),k\big)\Big|
   \frac{1}{\Delta y}\int_{-\Delta y}^0\int_\sigma^0 \Big|\partial_{y}\phi_{\delta,\delta_0}(t,x,s,\tau+y_{j+\frac12})\Big| \,d\tau\,d\sigma \notag \\
   & \hspace{9cm} \times J_l(u(t,x)-k)\,ds\,dk\,dx\,dt\Big]   
   \\ \leq
&C\, \E \Big[\int_{\R} \int_0^T \int_0^T\sum_{j}  \big|u_{j+1}(s) - u_{j}(s)\big|
   \frac{1}{\Delta y}\int_{-\Delta y}^0\int_\sigma^0 \Big|\partial_{y}\phi_{\delta,\delta_0}(t,x,s,\tau+y_{j+\frac12})\Big| \,d\tau\,d\sigma  \,ds\,dx\,dt\Big]   
   \\ \leq
& C\, \E \Big[\int_{\R}\int_0^T  \sum_{j}  \big|u_{j+1}(s) - u_{j}(s)\big|
   \frac{1}{\Delta y}\int_{-\Delta y}^0\int_\sigma^0 \frac{\psi(s,\tau+y_{j+\frac12})}{\delta^2}\mathds{1}_{\{|x-y_{j+\frac12}|<\delta\}} \,d\tau\,d\sigma \,ds\,dx\,\Big]  
   \\ \leq
& C\, \E \Big[\int_0^T  \sum_{j}  \big|u_{j+1}(s) - u_{j}(s)\big|
   \frac{1}{\Delta y}\int_{-\Delta y}^0\int_\sigma^0 \frac{\psi(s, \tau+ y_{j+\frac12})}{\delta} \,d\tau\,d\sigma \,ds\Big]   
   \\ \leq
& C\, \frac{\Delta y}{\delta}\, \E \Big[\int_0^T  \sum_{j}  \big|u_{j+1}(s) - u_{j}(s)\big| \,ds\Big]   \leq C \frac{\Delta y}{\delta}
\end{align*}
where, in particular, \eqref{inq:discrete-bv-1} have been used.

Let us first consider the term $J_{5,1}$. Note that for any $a,b \in \R$, $f^\beta(a,b)$ is Lipschitz continuous in both the variables.
Therefore, we have 
\begin{align}
J_{5,1}& = \E\Big[\int_{\Pi_T}\int_{\R^2} f^\beta \big(u_{\Delta y}(s,y),k\big)\partial_y \varrho_{\delta}(x-y)\psi(s,y)
J_l(u(s,x)-k) \,ds\,dy \,dk\,dx\Big]\notag \\
&\, - \E\Big[\int_{\Pi_T}\int_{\R^2} f^\beta \big(u_{\Delta y}(s,y),u(s,x)-k\big) \partial_y\varrho_{\delta}(x-y) \psi(s,y) \Big(1-\int_{0}^T \rho_{\delta_0}(t-s)\,dt\Big)
 J_l(k) \,ds\,dy\,dx\,dk\Big] \notag \\
 &\quad + \E\Big[\int_{\Pi_T}\int_{\R} \int_{\Pi_T} \Big\{ f^\beta \big(u_{\Delta y}(s,y),u(t,x)-k\big)-  f^\beta \big(u_{\Delta y}(s,y),u(s,x)-k\big)\Big\} \partial_y\varrho_{\delta}(x-y) \notag \\
 & \hspace{8cm} \times \rho_{\delta_0}(t-s)\psi(s,y) J_l(k) \,ds\,dy \,dk\,dx\,dt\Big]\notag \\
& \le  \E\Big[\int_{\Pi_T}\int_{\R^2} f^\beta \big(u_{\Delta y}(s,y),k\big)\partial_y \varrho_{\delta}(x-y)\psi(s,y) J_l(u(s,x)-k) \,ds\,dy \,dk\,dx\Big]\notag \\
&\quad  +\E\Big[\int_{\Pi_T}\int_{\Pi_T} \big|u(s,x)-u(t,x)\big| \psi(s,y)\rho_{\delta_0}(t-s) |\partial_y \varrho_{\delta}(x-y)|\,dx\,dt\,dy\,ds\Big]\notag \\
&\qquad \qquad   + \E\Big[\int_{s=0}^{\delta_0}\int_{\R^3}\big|u_{\Delta y}(s,y)-u(s,x)+k\big||\partial_y \varrho_{\delta}(x-y)|\psi(s,y)J_l(k)\,dk,dx\,dy\,ds\Big] \notag \\
& \le  \E\Big[\int_{\Pi_T}\int_{\R^2} f^\beta \big(u_{\Delta y}(s,y),k\big)\partial_y \varrho_{\delta}(x-y)\psi(s,y) J_l(u(s,x)-k) \,ds\,dy \,dk\,dx\Big]
+ C \frac{\delta_0}{\delta} \notag \\
& \hspace{2cm}  + \E\Big[\int_{\Pi_T}\int_{\Pi_T} \big|u(s,x)-u(t,x)\big| \psi(s,y)\rho_{\delta_0}(t-s) |\partial_y \varrho_{\delta}(x-y)|\,dx\,dt\,dy\,ds\Big] \notag \\
& := J_{5,1}^1 + C \frac{\delta_0}{\delta} + C\frac{\sqrt{\delta_0}}{\delta}, \label{esti:j51-half}
\end{align}
thanks to the Lemma~\ref{continuity in time}. Then, we estimate $J_{5,1}^1$ as follows:
\begin{align}
J_{5,1}^1 & \le  \E\Big[\int_{\Pi_T}\int_{\R} f^\beta \big(u_{\Delta y}(s,y),u(s,x)\big)\partial_y \varrho_{\delta}(x-y)\psi(s,y)\,dx\,dy\,ds\Big] \notag \\
& \qquad + \Big|\E\Big[\int_{\Pi_T}\int_{\R^2}\Big\{  f^\beta \big(u_{\Delta y}(s,y),u(s,x)-k\big)-  f^\beta \big(u_{\Delta y}(s,y),u(s,x)\big)\Big\} \notag \\
& \hspace{3cm} \times \partial_y \varrho_{\delta}(x-y) \psi(s,y)J_l(k) \,ds\,dy \,dk\,dx\Big]\Big|\notag \\
 & \le  \E\Big[\int_{\Pi_T}\int_{\R} f^\beta \big(u_{\Delta y}(s,y),u(s,x)\big)\partial_y \varrho_{\delta}(x-y)\psi(s,y)\,dy \,dx\,ds\Big] + C \frac{l}{\delta}.\label{esti:j51-1}
\end{align}

Therefore, we have 
\begin{align}
 J_{5,1} & \le  \E\Big[\int_{\Pi_T}\int_{\R} f^\beta \big(u_{\Delta y}(s,y),u(s,x)\big)\partial_y \varrho_{\delta}(x-y)\psi(s,y)\,dy \,dx\,ds\Big] 
 +  C\frac{\sqrt{\delta_0}}{\delta} + C \frac{l}{\delta}.\label{esti:j51}
\end{align}

Using same calculations as we have done for the term $J_{5,1}$, we can show that 
\begin{align}
I_5 &= \E \Big[\int_{\Pi_T}\int_{\Pi_T} \int_{\R} 
 f^\beta(u(t,x),k)\partial_x \phi_{\delta,\delta_0}(t,x,s,y) J_l(u_{\Delta y}(s,y)-k)\,dk\,dx\,dt\,dy\,ds\Big] \notag
 \\ &= 
 \E \Big[\int_{\Pi_T}\int_{\Pi_T} \int_{\R} 
 f^\beta(u(s,x),k)\partial_x \phi_{\delta,\delta_0}(t,x,s,y) J_l(u_{\Delta y}(s,y)-k)\,dk\,dx\,dt\,dy\,ds\Big] \notag
 \\&+ 
 \E \Big[\int_{\Pi_T}\int_{\Pi_T} \int_{\R} 
 [f^\beta(u(t,x),k)-f^\beta(u(s,x),k)]\partial_x \phi_{\delta,\delta_0}(t,x,s,y) J_l(u_{\Delta y}(s,y)-k)\,dk\,dx\,dt\,dy\,ds\Big] \notag
 \\ &\leq 
 \E \Big[\int_{\Pi_T}\int_{\Pi_T} \int_{\R} 
 f^\beta(u(s,x),u_{\Delta y}(s,y)-k)\partial_x \phi_{\delta,\delta_0}(t,x,s,y) J_l(k)\,dk\,dx\,dt\,dy\,ds\Big] \notag
 \\& \quad +
\frac{C}{\delta} \E \Big[\int_{K}\int_{(0,T)^2} 
 \Big[|u(t,x)-u(s,x)|+l\Big]\rho_{\delta_0}(t-s) \,dx\,dt\,ds\Big] \notag
 \\ &\leq 
 \E \Big[\int_{\Pi_T}\int_{\Pi_T} \int_{\R} 
 f^\beta(u(s,x),u_{\Delta y}(s,y))\partial_x \phi_{\delta,\delta_0}(t,x,s,y) J_l(k)\,dk\,dx\,dt\,dy\,ds\Big] 
 + C\frac{l}{\delta} + \frac{C\sqrt{\delta_0}}{\delta}\notag
\\&\le  
\E\Big[\int_{\Pi_T}\int_{\R} f^\beta \big(u(s,x),u_{\Delta y}(s,y)\big)\partial_x \varrho_{\delta}(x-y)\psi(s,y)\,dy \,dx\,ds\Big]
+ C\,\frac{\sqrt{\delta_0}}{\delta} + C \frac{l}{\delta}.\label{esti:i5}
\end{align}
Combining \eqref{esti:j51} in \eqref{eq:j5} along with the estimate \eqref{esti:i5}, we have 
\begin{align}
I_5 +J_5 & \le \E\Big[\int_{\Pi_T}\int_{\R} \Big\{f^\beta \big(u_{\Delta y}(s,y),u(s,x)\big)- f^\beta \big( u(s,x),u_{\Delta y}(s,y)\big) \Big\} \partial_y \varrho_{\delta}(x-y) \psi(s,y)\,dx\,dy\,ds\Big] \notag \\
& \hspace{2cm} + \E\Big[\int_{\Pi_T}\int_{\R} f^\beta \big(u_{\Delta y}(s,y),u(s,x)\big)\varrho_{\delta}(x-y)\partial_y \psi(s,y)\,dx\,dy\,ds\Big]  \notag \\
& \hspace{4cm}+  C\frac{\Delta y + \sqrt{\delta_0} + l}{\delta}.
\end{align}
Thus, we have proved the following Lemma.
\begin{lem}
\label{I5+J5}
\begin{align}
&\limsup_{l \rightarrow 0}\,\limsup_{\delta_0 \rightarrow 0} \, (I_5 +J_5) 
\\ & \le 
\E\Big[\int_{\Pi_T}\int_{\R} \Big\{f^\beta \big(u_{\Delta y}(s,y),u(s,x)\big)- f^\beta \big( u(s,x),u_{\Delta y}(s,y)\big) \Big\} \partial_y \varrho_{\delta}(x-y) \psi(s,y)\,dx\,dy\,ds\Big] \notag
\\& \hspace{2cm} + \E\Big[\int_{\Pi_T}\int_{\R} f^\beta \big(u(s,x),u_{\Delta y}(s,y)\big)\varrho_{\delta}(x-y)\partial_y \psi(s,y)\,dx\,dy\,ds\Big]
+ C\frac{\Delta y}{\delta}.  \notag
\end{align}
\end{lem}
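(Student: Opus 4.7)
My plan is to exploit the monotonicity of the scheme (we are in the case $f'\le 0$, so $F_{j+1/2}=f(u_{j+1})$) together with the convexity of $\beta$ to convert the discrete flux difference in $J_5$ into a telescoping entropy flux. For $a\le b$, the trivial inequality $\int_a^b \beta''(r-k)(f(b)-f(r))\,dr \le 0$ rearranges (integration by parts in $r$) to $-\beta'(a-k)(f(b)-f(a)) \le f^\beta(a,k)-f^\beta(b,k)$. Applying this pointwise with $a=u_j(s)$, $b=u_{j+1}(s)$ bounds the integrand of $J_5$ by a telescoping difference. A discrete summation by parts in $j$ then transfers the finite difference onto the test function, rewriting $J_5 \le J_{5,1}+J_{5,2}$, where $J_{5,1}$ is the integral against the true derivative $\partial_y \phi_{\delta,\delta_0}$ and $J_{5,2}$ is the Taylor-type remainder.

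For $J_{5,2}$, I would write the second-order remainder as $\frac{1}{\Delta y}\int_{-\Delta y}^0\!\!\int_\sigma^0 \partial_{yy}\phi_{\delta,\delta_0}\,d\tau\,d\sigma$ and then perform a further discrete summation by parts in $j$ to produce a factor $|f^\beta(u_{j+1}(s),k)-f^\beta(u_j(s),k)| \le C\,|u_{j+1}(s)-u_j(s)|$. The $x$-integration absorbs one of the $1/\delta$ factors in $\partial_{yy}\phi_{\delta,\delta_0}$, the double integral $\int_{-\Delta y}^0\!\!\int_\sigma^0$ provides one factor of $\Delta y$, and Lemma~\ref{lem:bv-estimate} controls $\sum_j \E|u_{j+1}(s)-u_j(s)|$ uniformly. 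The net estimate is $|J_{5,2}| \le C \Delta y/\delta$.

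For the main terms $J_{5,1}$ and $I_5$, I would make two successive reductions. First, replace $u(t,x)$ by $u(s,x)$ inside $J_l$ using the fact that $\int_0^T \rho_{\delta_0}(t-s)\,dt \approx 1$: the error is controlled by the averaged time continuity estimate of Lemma~\ref{continuity in time}, contributing $C\sqrt{\delta_0}/\delta$ (the $1/\delta$ coming from $\partial_y \varrho_\delta$, resp.\ $\partial_x \varrho_\delta$). Second, remove the $J_l$ mollifier by Lipschitz continuity of $f^\beta$ in its second argument, at cost $C l/\delta$. The decomposition $\partial_y \phi_{\delta,\delta_0}(t,x,s,y) = \rho_{\delta_0}(t-s)\big(\partial_y\varrho_\delta(x-y)\,\psi(s,y) + \varrho_\delta(x-y)\,\partial_y\psi(s,y)\big)$ splits $J_{5,1}$ into the two target pieces of the lemma, while the symmetric treatment of $I_5$, combined with the antisymmetry $\partial_x \varrho_\delta(x-y) = -\partial_y\varrho_\delta(x-y)$, assembles the combination $f^\beta(u_{\Delta y},u) - f^\beta(u,u_{\Delta y})$ weighted by $\partial_y\varrho_\delta(x-y)\,\psi$. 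Taking $\limsup_{\delta_0\to 0}$ then $\limsup_{l\to 0}$ absorbs the $\sqrt{\delta_0}/\delta$ and $l/\delta$ errors, leaving the asserted bound with the residual $C\Delta y/\delta$.

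The main obstacle I anticipate is the estimation of the Taylor remainder $J_{5,2}$: because $\partial_{yy}\phi_{\delta,\delta_0}$ scales like $1/\delta^3$ after accounting for the $x$ and $y$ differentiations, one has to be careful that only a single $1/\delta$ survives after the $x$-integration and that the extra factor of $\Delta y$ from the double integral (not just one) is correctly captured. Without this sharp accounting, one would obtain a weaker rate $\Delta y/\delta^2$ instead of $\Delta y/\delta$, which would later be incompatible with the final $\sqrt{\Delta y}$ convergence rate in the Main Theorem.
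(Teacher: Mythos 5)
Your proposal follows the paper's proof essentially step for step: the same convexity/monotonicity inequality turning the discrete flux term into a telescoping $f^\beta$ difference, the same summation by parts and splitting $J_5\le J_{5,1}+J_{5,2}$, the same treatment of the Taylor remainder $J_{5,2}$ via a second discrete summation by parts, the Lipschitz bound $|f^\beta(u_{j+1},k)-f^\beta(u_j,k)|\le C|u_{j+1}-u_j|$ and the BV estimate \eqref{inq:discrete-bv-1} to land on $C\Delta y/\delta$, and the same $\sqrt{\delta_0}/\delta$ and $l/\delta$ error terms from Lemma~\ref{continuity in time} and the removal of $J_l$, assembled via $\partial_x\varrho_\delta=-\partial_y\varrho_\delta$. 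The argument is correct and matches the paper's, including the sharp accounting that yields $\Delta y/\delta$ rather than $\Delta y/\delta^2$.
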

Let us consider the additional term $J_4$. In view of the assumption \ref{A3} and the bound $\|\beta_{\xi}^{\prime\prime\prime}\|_{L^\infty(\R)}\le \frac{C}{\xi^2}$, we have
\begin{align}
 J_4  &= \frac{1}{2}  \E \Big[\int_{\Pi_T} \int_{\Pi_T}\int_{\R}  \sigma^2(u_{\Delta y}(s,y)) \beta^{\prime\prime}(u_{\Delta y}(s,y)-k)
\phi_{\delta,\delta_0}(t,x,s,y)\,J_l(u(t,x)-k) \,dk\,dt \,dx \,dy\,ds \Big] \notag
\\ & = 
\frac{1}{2}  \E \Big[\int_{\Pi_T} \int_{\R}  \int_{\Pi_T} \sigma^2(u_{\Delta y}(s,y)) \Big\{ \beta^{\prime\prime}\big(
 u_{\Delta y}(s,y)-u(t,x) + k\big)- \beta^{\prime\prime}\big(u_{\Delta y}(s,y)-u(s,x) + k\big) \Big\} \notag \\
 & \hspace{5cm} \times \psi(s,y) \varrho_{\delta}(x-y) \rho_{\delta_0}(t-s) J_l(k) \,dy \,ds \,dk\,dx\,dt \Big] \notag\\ 
 & \quad -\frac{1}{2}  \E \Big[\int_{\Pi_T} \int_{\R^2}\sigma^2(u_{\Delta y}(s,y))  \beta^{\prime\prime}\big(u_{\Delta y}(s,y)-u(s,x) + k\big) 
 \psi(s,y) \Big(1-\int_0^T \rho_{\delta_0}(t-s)\,dt\Big) \notag \\
 & \hspace{8cm} \times \varrho_{\delta}(x-y) J_l(k) \,dy \,ds \,dk\,dx \Big] \notag
 \\  & \qquad + 
 \frac{1}{2}  \E \Big[\int_{\Pi_T} \int_{\R^2} \sigma^2(u_{\Delta y}(s,y))  \beta^{\prime\prime}\big(u_{\Delta y}(s,y)-u(s,x) + k\big) 
 \psi(s,y)\varrho_{\delta}(x-y) J_l(k) \,dy \,ds \,dk\,dx \Big] \notag
 \\ & \le 
 \frac{1}{2}  \E \Big[\int_{\Pi_T} \int_{\R^2} \sigma^2(u_{\Delta y}(s,y))  \beta^{\prime\prime}\big(u_{\Delta y}(s,y)-u(s,x) + k\big) 
 \psi(s,y)\varrho_{\delta}(x-y) J_l(k) \,dy \,ds \,dk\,dx \Big] \notag
 \\  & \qquad + 
 \frac{C}{\xi^2} \E\Big[\int_{\Pi_T}\int_{\Pi_T} \sigma^2(u_{\Delta y}(s,y))\big|u(t,x)-u(s,x)\big| 
 \psi(s,y) \varrho_{\delta}(x-y) \rho_{\delta_0}(t-s)\,dy \,ds\,dx\,dt \Big] \notag
 \\  &  \hspace{6cm} + 
 \frac{C}{\xi} \E\Big[ \int_{s=0}^{\delta_0} \int_{\R} \sigma^2(u_{\Delta y}(s,y))\psi(s,y)\,dy\,ds\Big] \notag 
 \\  & \le 
 \frac{1}{2}  \E \Big[\int_{\Pi_T} \int_{\R^2} \sigma^2(u_{\Delta y}(s,y))  \beta^{\prime\prime}\big(u_{\Delta y}(s,y)-u(s,x) + k\big) 
  \psi(s,y)\varrho_{\delta}(x-y) J_l(k) \,dy \,ds \,dk\,dx \Big] \notag\\ 
  & \qquad + \frac{C}{\xi^2} \E\Big[\int_{K}\int_0^T \int_{0}^T \big|u(t,x)-u(s,x)\big|  \rho_{\delta_0}(t-s) \,ds\,dt\,dx \Big] + C \frac{\delta_0}{\xi} \notag\\ 
  &:= J_{4,1} + C \frac{\sqrt{\delta_0}}{\xi^2} +  C \frac{\delta_0}{\xi},\label{esti:j41}
  \end{align}
thanks to Lemma~\ref{continuity in time}. Let us focus on the term $J_{4,1}^1$.
 \begin{align}
 J_{4,1} & \le \frac{1}{2} \E \Big[\int_{\Pi_T} \int_{\R} \sigma^2(u_{\Delta y}(s,y))  \beta^{\prime\prime}\big(u_{\Delta y}(s,y)-u(s,x)\big) 
 \psi(s,y)\varrho_{\delta}(x-y) \,dy \,ds\,dx \Big] \notag\\ 
 & \qquad  + \frac{1}{2}\Big| \E \Big[\int_{\Pi_T} \int_{\R^2} \sigma^2(u_{\Delta y}(s,y))\Big\{   \beta^{\prime\prime}\big(u_{\Delta y}(s,y)-u(s,x) + k\big) 
 - \beta^{\prime\prime}\big(u_{\Delta y}(s,y)-u(s,x)\big) \Big\} \notag \\
 & \hspace{4cm} \times  \psi(s,y)\varrho_{\delta}(x-y) J_l(k)\,dk \,dy \,ds\,dx \Big]\Big| \notag\\ 
 & \le \frac{1}{2}  \E \Big[\int_{\Pi_T} \int_{\R} \sigma^2(u_{\Delta y}(s,y))  \beta^{\prime\prime}\big(u_{\Delta y}(s,y)-u(s,x)\big) 
 \psi(s,y)\varrho_{\delta}(x-y) \,dy \,ds\,dx \Big] \notag\\  
 & \qquad  \quad + \frac{C}{\xi^2} \E\Big[ \int_{\Pi_T}\int_{\R^2} \sigma^2(u_{\Delta y}(s,y)) |k|J_l(k) 
 \psi(s,y) \varrho_{\delta}(x-y) \,dk\,dy\,ds\,dx\Big]  \notag \\
 & \le \frac{1}{2}  \E \Big[\int_{\Pi_T} \int_{\R} \sigma^2(u_{\Delta y}(s,y))  \beta^{\prime\prime}\big(u_{\Delta y}(s,y)-u(s,x)\big) 
 \psi(s,y)\varrho_{\delta}(x-y) \,dy \,ds\,dx \Big] \notag \\
 & \quad \qquad + C  \|\psi\|_{L^\infty(\Pi_T)} \frac{l}{\xi^2} \sup_{0\le s\le T} \E\Big[ \|u_{\Delta y}(s,\cdot)\|_2^2\Big]  \notag \\
 &\le  \frac{1}{2}  \E \Big[\int_{\Pi_T} \int_{\R} \sigma^2(u_{\Delta y}(s,y))  \beta^{\prime\prime}\big(u_{\Delta y}(s,y)-u(s,x)\big) 
 \psi(s,y)\varrho_{\delta}(x-y) \,dy \,ds\,dx \Big] + C \frac{l}{\xi^2}.\label{esti:j41-1}
\end{align}
Combining the above terms together leads to
\begin{align}
 J_4  & \le \frac{1}{2}  \E \Big[\int_{\Pi_T} \int_{\R} \sigma^2(u_{\Delta y}(s,y))  \beta^{\prime\prime}\big(u_{\Delta y}(s,y)-u(s,x)\big) 
 \psi(s,y)\varrho_{\delta}(x-y) \,dy \,ds\,dx \Big]   \notag \\
 & \hspace{2cm}  + C \Big( \frac{\delta_0}{\xi}   + \frac{l}{\xi^2} + \frac{\sqrt{\delta_0}}{\xi^2}\Big).\label{esti:j4-final}
\end{align}
Again, we see that
\begin{align}
 I_4 & =\frac{1}{2}  \E \Big[\int_{\Pi_T} \int_{\Pi_T}
\int_{\R}  \sigma^2(u(t,x)) \beta^{\prime \prime}(u(t,x)-k)\phi_{\delta,\delta_0}(t,x,s,y)
 \,J_l(u_{\Delta y}(s,y)-k) \,dt \,dx \,dk \,dy\,ds \Big] \notag
 \\ &= 
 \frac{1}{2}  \E \Big[\int_{\Pi_T} \int_{\Pi_T}
 \sigma^2(u(t,x)) \beta^{\prime \prime}(u(t,x)-u_{\Delta y}(t,y))\phi_{\delta,\delta_0}(t,x,s,y) \,dt \,dx \,dy\,ds \Big] \notag
 \\&+
 \frac{1}{2}  \E \Big[\int_{\Pi_T} \int_{\Pi_T}
\int_{\R}  \sigma^2(u(t,x)) \Big[\beta^{\prime \prime}(u(t,x)-u_{\Delta y}(s,y) + k) - \beta^{\prime \prime}(u(t,x)-u_{\Delta y}(t,y))\Big] \notag
\\& \hspace{8cm}\times\phi_{\delta,\delta_0}(t,x,s,y)
 \,J_l(k) \,dt \,dx \,dk \,dy\,ds \Big] \notag
 \\ & \leq
 \frac{1}{2}  \E \Big[\int_{\R^2} \int_0^T
 \sigma^2(u(t,x)) \beta^{\prime \prime}(u(t,x)-u_{\Delta y}(t,y))\psi(t,y)\rho_{\delta}(x-y) \,dt \,dx \,dy \Big] \notag
\\ & +
 \frac{1}{2}  \E \Big[\int_{\Pi_T} \int_{\Pi_T}
 \sigma^2(u(t,x)) \beta^{\prime \prime}(u(t,x)-u_{\Delta y}(t,y))\Big[\psi(s,y)-\psi(t,y)\Big]\rho_{\delta}(x-y)\rho_{\delta_0}(t-s) \,dt \,dx \,dy\,ds \Big] \notag
 \\&+
 \frac{C}{\xi^2}  \E \Big[\int_{\Pi_T} \int_{\Pi_T}
\int_{\R}  \sigma^2(u(t,x)) \Big[|u_{\Delta y}(s,y)  - u_{\Delta y}(t,y)| + l\Big] \phi_{\delta,\delta_0}(t,x,s,y)
 \,J_l(k) \,dt \,dx \,dk \,dy\,ds \Big] \notag
 \\ & \leq
 \frac{1}{2}  \E \Big[\int_{\R^2} \int_0^T
 \sigma^2(u(s,x)) \beta^{\prime \prime}(u(s,x)-u_{\Delta y}(s,y))\psi(s,y)\rho_{\delta}(x-y) \,ds \,dx \,dy \Big] 
+ C\frac{\delta_0}{\xi^2} + 
 \frac{C}{\xi^2}  (\sqrt{\delta_0}+l).\label{esti:i4-final}
\end{align}
In view of the estimates \eqref{esti:j4-final} and \eqref{esti:i4-final}, we obtain 
\begin{align}
I_4 + J_4 & \le \frac{1}{2}  \E \Big[\int_{\Pi_T} \int_{\R} \beta^{\prime\prime}\big(u_{\Delta y}(s,y)-u(s,x)\big) \Big\{\sigma^2(u(s,x)) + \sigma^2(u_{\Delta y}(s,y)) \Big\} 
\psi(s,y)\varrho_{\delta}(x-y) \,dy \,ds\,dx \Big] \notag \\
 & \hspace{3cm} + C (\frac{\delta_0}{\xi} + \frac{\sqrt{\delta_0}}{\xi^2}  + \frac{l}{\xi^2}). \label{esti:i4+j4-final}
\end{align}
Next we consider It\^{o} integral terms. Note that, for any two constants $t_1, t_2 \ge 0$ with $t_1 < t_2$, 
\begin{align}\label{independency}
\begin{cases}
\E \Big[ X_{t_1} \int_{t_1}^{t_2} J(t)\,dW(t)\Big]=0, \vspace{0.1cm}\\
\E\Big[X_{t_1}\int_{t_1}^{t_2} \int_{|z|>0} \zeta(t,z)\,\widetilde{N}(dz,dt)\Big] = 0,
\end{cases}
\end{align}
where $ J,\zeta$ are predictable integrand with 
$\E \Big[\int_0^T \int_{|z|>0}\zeta^2(t,z)\, m(dz)\, dt\Big] < +\infty$ 
and $X(\cdot)$ is an adapted process. We claim that $J_3=0$. Indeed, in view of \eqref{independency}
\begin{align*}
 J_3 & = \int_{\Pi_T} \int_{\R^2}   \E\Big[ J_l(u(t,x)-k) \int_{s=t}^{s=t+ \delta_0} 
  \sigma(u_{\Delta y}(s,y)) \beta^{\prime}(u_{\Delta y}(s,y)-k)\phi_{\delta,\delta_0}(t,x,s,y)\,dW(s)\Big] dk\,dy\,dt\,dx \notag\\
  &= 0. 
\end{align*}
To proceed further, we define
\begin{align*}
 \mathcal{M}[\beta,\phi_{\delta,\delta_0}](s,y,k)= \int_{\Pi_T} \sigma(u(t,x))\beta\big(u(t,x)-k\big) \phi_{\delta,\delta_0}(t,x,s,y)\,dx\, dW(t).
\end{align*}
Regarding $\mathcal{M}[\beta,\phi_{\delta,\delta_0}](s,y,k)$, we have the following lemma whose proof could be
found in \cite{BisMaj,BisKarlMaj}.
\begin{lem} The following identities hold:
 \begin{align*}
    &\partial_k  \mathcal{M}[\beta, \phi_{\delta,\delta_0}](s, y, k) =  J[-\beta^\prime, \phi](s, y, k)\\
    & \partial_{y}  \mathcal{M}[\beta, \phi_{\delta,\delta_0}](s, y, k) =  J[\beta, \partial_{y}\phi](s, y, k).
   \end{align*}
Moreover, let $ \beta=\beta_\xi$ be a function defined previously. Then there exists a constant $C=C(\psi )$ such that
 \begin{align}
 &\sup_{0\le s\le T} \E\Big[|| \mathcal{M}[\beta^{\prime\prime},\phi_{\delta,\delta_0}](s,\cdot,\cdot)||_{L^\infty(\R\times \R)}^2\Big] \le
  \frac{C(\psi )}{ \xi^\frac{3}{2}\,\delta_0^\frac{3}{2}},\label{eq:l-infinity bound-1} \\
&\sup_{0\le s\le T} \E \Big[|| \mathcal{M}[\beta^{\prime\prime\prime},\phi_{\delta,\delta_0}](s,\cdot,\cdot)||_{L^\infty(\R\times \R)}^2\Big] \le
  \frac{C(\psi )}{ \xi^2\,\delta_0^\frac{3}{2}}.\label{eq:l-infinity bound-2}
 \end{align}
\end{lem}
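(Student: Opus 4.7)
The plan is to prove the two identities by differentiation under the It\^o integral, and then obtain the $L^\infty$ bounds by combining the identities with Burkholder--Davis--Gundy, stochastic Fubini, and a Sobolev-type embedding in the parameters $(y,k)$.

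For the identities, the integrand $\sigma(u(t,x))\beta_\xi(u(t,x)-k)\phi_{\delta,\delta_0}(t,x,s,y)$ is smooth in $k$ and $y$. Since $u$ is a.s.\ $L^\infty$-bounded, $\sigma$ is Lipschitz with $\sigma(0)=0$, and $\beta_\xi$ has bounded derivatives of all orders, the difference quotients in $k$ and $y$ converge in $L^2(\Omega\times(0,T)\times\R)$ to the respective pointwise derivatives. It\^o isometry then justifies interchanging $\partial_k$ and $\partial_y$ with $\int\cdots\,dW$; using $\partial_k\beta(u-k)=-\beta'(u-k)$ and the fact that $y$ enters only through $\phi_{\delta,\delta_0}$ yields the two claimed identities.

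For the $L^\infty$ estimates, I would first exploit the tensor structure of $\phi_{\delta,\delta_0}=\rho_{\delta_0}\varrho_\delta\psi$. By stochastic Fubini,
\begin{equation*}
\mathcal{M}[\beta'',\phi_{\delta,\delta_0}](s,y,k) \;=\; \psi(s,y)\int_\R \tilde H(x,k)\,\varrho_\delta(x-y)\,dx,\qquad \tilde H(x,k) \;:=\; \int_0^T \rho_{\delta_0}(t-s)\,\sigma(u(t,x))\beta''(u(t,x)-k)\,dW(t),
\end{equation*}
and since $\|\varrho_\delta\|_{L^1(\R)}=1$, we obtain the crucial $\delta$-free bound $\sup_y |\mathcal{M}[\beta'',\phi_{\delta,\delta_0}](s,y,k)| \leq \|\psi\|_\infty \|\tilde H(\cdot,k)\|_{L^\infty_x}$. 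It therefore suffices to control $\E\sup_{x,k}|\tilde H(x,k)|^2$, a quantity in which $\delta$ has disappeared. The $k$-derivatives of $\tilde H$ are handled by the identities of part~(1), yielding stochastic integrals of the form $\int_0^T \rho_{\delta_0}(t-s)\sigma(u)\beta^{(m)}(u-k)\,dW(t)$; BDG combined with the scalings $\int_\R (\beta_\xi^{(m)})^2\,dr\leq C\xi^{3-2m}$ and $\int_0^T \rho_{\delta_0}^2\,dt\leq C/\delta_0$ produces the correct $\xi$--$\delta_0$ dependence. A 2D interpolation-Sobolev inequality in $(x,k)$, e.g.\ $\|\tilde H\|_{L^\infty(\R^2)}^2\leq C\|\tilde H\|_{L^2}^{1/2}\|\tilde H\|_{H^2}^{3/2}$, combined with H\"older's inequality in $\omega$, then ties everything together to give the claimed $C(\psi)/(\xi^{3/2}\delta_0^{3/2})$. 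Bound \eqref{eq:l-infinity bound-2} follows by the same method after replacing $\beta''$ by $\beta'''$, whose $L^2$ scaling $\|\beta'''_\xi\|_{L^2}^2\sim\xi^{-3}$ accounts for the worse $\xi^{-2}$ exponent.

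The main obstacle is the $x$-regularity step: $u$ is only BV in $x$, so classical Sobolev/Morrey embeddings do not apply to $\tilde H$ directly. The strategy is to use the BV bound on $u$ (Lemma~\ref{lem:bv-estimate}) to control the spatial increments of the integrand in an averaged sense via BDG, and then invoke Kolmogorov continuity (or a tailored Sobolev embedding compatible with the available regularity) to pass from moment bounds on increments to a uniform estimate. The precise bookkeeping required to arrive at the exponents $3/2$ in both $\xi$ and $\delta_0$ is delicate; the full details are carried out in \cite{BisMaj,BisKarlMaj}.
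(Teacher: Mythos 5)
The paper itself does not prove this lemma; it only points to \cite{BisMaj,BisKarlMaj}, so there is no in-text argument to compare against. Your treatment of the two identities (difference quotients converging in $L^2$, It\^o isometry to justify commuting $\partial_k$ and $\partial_y$ with the stochastic integral) is fine. The problem is in the $L^\infty$ bounds, and it is exactly the obstacle you flag yourself --- but it is fatal to the route you propose, not a detail to be patched. After pulling out $\psi$ and discarding the convolution with $\varrho_\delta$ via $\|\varrho_\delta\|_{L^1}=1$, you are left needing $\E\big[\sup_{x,k}|\tilde H(x,k)|^2\big]$, where $\tilde H(x,k)=\int_0^T\rho_{\delta_0}(t-s)\sigma(u(t,x))\beta''(u(t,x)-k)\,dW(t)$ depends on $x$ only through $u(t,x)$. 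Since $u(t,\cdot)$ is merely of bounded variation (discontinuous in general), $\tilde H$ is not in $H^1_x$, let alone $H^2$, so the interpolation inequality $\|\tilde H\|^2_{L^\infty}\le C\|\tilde H\|_{L^2}^{1/2}\|\tilde H\|_{H^2}^{3/2}$ cannot be invoked (its exponents are also off: in two dimensions Gagliardo--Nirenberg gives $\|g\|_\infty^2\le C\|g\|_{L^2}\|g\|_{H^2}$). Kolmogorov's continuity criterion does not rescue this either: it requires moment bounds on increments such as $\E|\tilde H(x,k)-\tilde H(x',k')|^p\le C(|x-x'|+|k-k'|)^{2+\varepsilon}$, and the $x$-increments of $\tilde H$ are controlled only by $|u(t,x)-u(t,x')|$, for which a BV bound yields nothing pointwise.

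The way to close the argument --- and the way it is done in the cited references --- is never to take a supremum over $x$. The supremum in the statement is over $(y,k)$, and in those two variables $\mathcal{M}[\beta'',\phi_{\delta,\delta_0}](s,\cdot,\cdot)$ is genuinely smooth: the $y$-dependence enters only through the smooth factor $\varrho_\delta(x-y)\psi(s,y)$ and the $k$-dependence only through $\beta''(u-k)$. One therefore applies a two-dimensional embedding such as $\|g\|^2_{L^\infty(\R^2)}\le 2\|g\|_{L^2}\|\partial^2_{yk}g\|_{L^2}+2\|\partial_yg\|_{L^2}\|\partial_kg\|_{L^2}$ directly to $g=\mathcal{M}[\beta'',\phi_{\delta,\delta_0}](s,\cdot,\cdot)$, uses the identities of the first part to write $\partial_kg=\mathcal{M}[-\beta''',\phi_{\delta,\delta_0}]$ and $\partial_yg=\mathcal{M}[\beta'',\partial_y\phi_{\delta,\delta_0}]$, and then estimates each term by It\^o's isometry together with the scalings $\int(\beta^{(m)}_\xi)^2\,dr\le C\xi^{3-2m}$ and $\int\rho_{\delta_0}^2\,dt\le C\delta_0^{-1}$ that you correctly record. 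Your early use of $\|\varrho_\delta\|_{L^1}=1$ throws away precisely the mollification in $y$ that supplies the regularity the embedding needs. As written, your derivation of the two displayed bounds does not go through, and the closing appeal to \cite{BisMaj,BisKarlMaj} is doing all the work.
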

Note that, in view of the Fubini's theorem and \eqref{independency},
\begin{align*}
 \E\Big[\int_{\Pi_T}\int_{\R} \mathcal{M}[\beta^\prime,\phi_{\delta,\delta_0}](s,y,k) J_l \big(u_{\Delta y}(s-\delta_0,y)-k\big) \,dk\,dy\,ds\Big]=0,
\end{align*}
and hence we rewrite $I_3$  as 
\begin{align}
I_3 &= \E\Big[ \int_{\R}\int_{\Pi_T} \mathcal{M}[\beta^\prime,\phi_{\delta,\delta_0}](s,y,k)\Big\{ J_l\big(u_{\Delta y}(s,y)-k\big) -J_l\big(u_{\Delta y}(s-\delta_0,y)-k\big)\Big\}\,dy\,ds\,dk\Big] \notag \\
& = \E\Big[ \int_{\R}\int_{0}^T \sum_{j\in \mathbb{Z}} \int_{y_{j-\frac{1}{2}}}^{y_{j+\frac{1}{2}}}
\mathcal{M}[\beta^\prime,\phi_{\delta,\delta_0}](s,y,k)\Big\{ J_l\big(u_j(s)-k\big) -J_l\big(u_j(s-\delta_0)-k\big)\Big\}
\,dy\,ds\,dk\Big].\notag
\end{align}
To proceed further, we apply It\^{o} formula to $J_l(u_j(\cdot)-k)$ along with It\^{o} product rule and have 
\begin{align}
 I_3&=  \E\Big[ \int_{\Pi_T} \sum_{j\in \mathbb{Z}} \int_{y_{j-\frac{1}{2}}}^{y_{j+\frac{1}{2}}} 
 \mathcal{M}[\beta^{\prime\prime},\phi_{\delta,\delta_0}](s,y,k) \Bigg( \int_{r=s-\delta_0}^{s} J_l(u_j(r)-k)
 \frac{f(u_j(r))-f(u_{j+1}(r))}{\Delta y}\,dr\Bigg)\,dy\,ds\,dk\Big] \notag \\
& \quad  +  \E\Big[ \int_{\Pi_T} \sum_{j\in \mathbb{Z}} \int_{y_{j-\frac{1}{2}}}^{y_{j+\frac{1}{2}}} \int_{r=s-\delta_0}^s \int_{\R} 
 \beta^\prime(u(r,x)-k) J_l^\prime\big(u_j(r)-k\big) \sigma(u_j(r))\sigma(u(r,x)) \notag \\
 & \hspace{7cm} \times \phi_{\delta,\delta_0}(r,x,s,y)\,dx\,dr\,dy\,ds\,dk\Big] \notag \\
 &  \qquad + \frac{1}{2}  \E\Big[ \int_{\Pi_T} \sum_{j\in \mathbb{Z}} \int_{y_{j-\frac{1}{2}}}^{y_{j+\frac{1}{2}}} 
 \mathcal{M}[\beta^{\prime\prime\prime},\phi_{\delta,\delta_0}](s,y,k) \Bigg( \int_{r=s-\delta_0}^{s} \sigma^2(u_j(r))
 J_l(u_j(r)-k)\,dr\Bigg)\,dy\,ds\,dk\Big] \notag \\
 &= \E\Big[ \int_{\Pi_T} \int_{\R}
 \mathcal{M}[\beta^{\prime\prime},\phi_{\delta,\delta_0}](s,y,k) \Bigg( \int_{r=s-\delta_0}^{s} 
 \frac{f(u_{\Delta y}(r,y))-f(u_{\Delta y}(r,y+\Delta y))}{\Delta y} \notag \\
 & \hspace{7cm} \times  J_l\big(u_{\Delta y}(r,y)-k\big)\,dr\Bigg)\,dy\,ds\,dk\Big] \notag \\
& \quad  -  \E\Big[ \int_{\Pi_T} \int_{\R} \int_{r=s-\delta_0}^s \int_{\R} 
 \beta^{\prime\prime}(u(r,x)-k) J_l\big(u_{\Delta y}(r,y)-k\big) \sigma(u_{\Delta y}(r,y))\sigma(u(r,x)) \notag \\
 & \hspace{9cm} \times \phi_{\delta,\delta_0}(r,x,s,y)\,dx\,dr\,dy\,ds\,dk\Big] \notag \\
 &  \qquad + \frac{1}{2}  \E\Big[ \int_{\Pi_T} \int_{\R}
 \mathcal{M}[\beta^{\prime\prime\prime},\phi_{\delta,\delta_0}](s,y,k) \Bigg( \int_{r=s-\delta_0}^{s} \sigma^2(u_{\Delta y}(r,y))
 J_l\big(u_{\Delta y}(r,y)-k\big)\,dr\Bigg)\,dy\,ds\,dk\Big]\notag \\
 & := I_{3,1} + I_{3,2} + I_{3,3}.\label{esti:i3}
\end{align}
Let us first consider the expression $I_{3,1}$. 
\begin{align}
 I_{3,1}& \le \frac{C ||f^\prime||_{L^\infty}}{\Delta y}  \E\Big[ \int_{\Pi_T} \int_{\R} \int_{r=s-\delta_0}^s \big| \mathcal{M}[\beta^{\prime\prime},\phi_{\delta,\delta_0}](s,y,k) \big|
 \big|u_{\Delta y}(r,y)-u_{\Delta y}(r,y+\Delta y)\big| \notag \\
 & \hspace{5cm} \times J_l\big(u_{\Delta y}(r,y)-k\big)\,dr\,dy\,ds\,dk\Big] \notag \\
 & \le \frac{C ||f^\prime||_{L^\infty}}{\Delta y}  \E\Big[ \int_{K}\int_{0}^T \int_{r=s-\delta_0}^s 
 \| \mathcal{M}[\beta^{\prime\prime},\phi_{\delta,\delta_0}](s,\cdot,\cdot) \|_{L^\infty(\R\times\R)}\big|u_{\Delta y}(r,y)-u_{\Delta y}(r,y+\Delta y)\big|\,dr\,dy\,ds\Big] \notag \\
 & \le \frac{C ||f^\prime||_{L^\infty}}{\Delta y}  \Big( \int_{K}\int_{0}^T \int_{r=s-\delta_0}^s  \E \Big[  \| \mathcal{M}[\beta^{\prime\prime},\phi_{\delta,\delta_0}](s,\cdot,\cdot) \|_{L^\infty(\R\times\R)}^2\Big]
 \,dr\,ds\,dy\Big)^\frac{1}{2}  \notag \\
 & \hspace{4cm} \times \Big( \E \Big[\int_{K}\int_{0}^T \int_{r=s-\delta_0}^s \big|u_{\Delta y}(r,y)-u_{\Delta y}(r,y+\Delta y)\big|^2 \,dr\,ds\,dy \Big]\Big)^\frac{1}{2}\notag \\
 & \le C ||f^\prime||_{L^\infty} \delta_0 \frac{1}{\xi^\frac{3}{4}\delta_0^\frac{3}{4}} =C \frac{\delta_0^\frac{1}{4}}{\xi^\frac{3}{4}}.\label{esti:i31}
\end{align}
In the above, we have used the fact that $\E\Big[ |u_{\Delta y}(r,\cdot)|_{BV(\R)}\Big] \le  C \, \E\big[|u_0(\cdot)|_{BV(\R)}\big]$ 
and $u_{\Delta y}(r,\cdot) \in L^\infty(\R)$ for all $r\in [0,T]$ almost surely $\omega \in \Omega$ along with the estimates \eqref{eq:l-infinity bound-1}.
\vspace{.2cm}

Let us turn our attention to estimate the term $I_{3,2}$. Following a similar argument as in \cite[Lemma 5.6]{BisKarlMaj}, we see that
\begin{align}
I_{3,2} &\le - \E\Big[ \int_{\Pi_T}\int_{\R^2}\beta^{\prime\prime}(u(r,x)-k) \sigma(u_{\Delta y}(r,y))\sigma(u(r,x)) J_l\big(u_{\Delta y}(r,y)-k\big)
  \psi(r,y)\varrho_{\delta}(x-y)\,dk\,dx\,dy\,dr\Big] \notag \\
  & \hspace{6cm} + C\frac{\delta_0}{\xi} \notag \\ 
 & \le - \E\Big[ \int_{\Pi_T}\int_{\R} \beta^{\prime\prime}\big(u(r,x)-u_{\Delta y}(r,y)\big) \sigma(u_{\Delta y}(r,y))\sigma(u(r,x))
  \psi(r,y)\varrho_{\delta}(x-y)\,dx\,dy\,dr\Big] + C\frac{\delta_0}{\xi} \notag \\
  & \qquad  + \Big| \E\Big[ \int_{\Pi_T}\int_{\R}\int_{\R} \Big\{\beta^{\prime\prime}\big(u(r,x)-u_{\Delta y}(r,y) + k\big) - 
   \beta^{\prime\prime}\big(u(r,x)-u_{\Delta y}(r,y)\big)\Big\} \notag \\
   & \hspace{4cm} \times \sigma(u_{\Delta y}(r,y))\sigma(u(r,x)) J_l(k) \psi(r,y)\varrho_{\delta}(x-y)\,dk\,dx\,dy\,dr\Big]\Big| \notag \\
   & \le - \E\Big[ \int_{\Pi_T}\int_{\R} \beta^{\prime\prime}\big(u(r,x)-u_{\Delta y}(r,y)\big) \sigma(u_{\Delta y}(r,y))\sigma(u(r,x))
  \psi(r,y)\varrho_{\delta}(x-y)\,dx\,dy\,dr\Big]  \notag \\
  & \hspace{6cm} + C \big(\frac{\delta_0}{\xi} + \frac{l}{\xi^2}\big).\label{esti:i32}
\end{align}
Next we want to estimate the term $I_{3,3}$. In view of the uniform moment estimate \eqref{uni:moment} and the estimate \eqref{eq:l-infinity bound-2}, we have 
\begin{align}
 I_{3,3} &= \frac{1}{2}  \E\Big[ \int_{\Pi_T} \int_{\R}
 \mathcal{M}[\beta^{\prime\prime\prime},\phi_{\delta,\delta_0}](s,y,k) \Big( \int_{r=s-\delta_0}^{s} \sigma^2(u_{\Delta y}(r,y))
 J_l\big(u_{\Delta y}(r,y)-k\big)\,dr\Big)\,dy\,ds\,dk\Big] \notag
 \\ & \le 
 \E\Big[ \int_{|y|<c_\psi} \int_{0}^T  \int_{r=s-\delta_0}^s   \| \mathcal{M}[\beta^{\prime\prime\prime},\phi_{\delta,\delta_0}](s,\cdot,\cdot) \|_{L^\infty(\R\times\R)}
  \sigma^2(u_{\Delta y}(r,y))\,dr\,ds\,dy\Big] \notag \\
  & \le C\, \E\Big[ \int_{|y|<c_\psi} \int_{0}^T  \int_{r=s-\delta_0}^s   \| \mathcal{M}[\beta^{\prime\prime\prime},\phi_{\delta,\delta_0}](s,\cdot,\cdot) \|_{L^\infty(\R\times\R)}
  u_{\Delta y}^2(r,y)\,dr\,ds\,dy\Big] \notag \\
  & \le C\, \int_0^T \int_{r=s-\delta_0}^s  \Big( \E\Big[  \| \mathcal{M}[\beta^{\prime\prime\prime},\phi_{\delta,\delta_0}]
  (s,\cdot,\cdot) \|_{L^\infty(\R\times\R)}^2 \Big]\Big)^\frac{1}{2} \notag \\
  & \hspace{5cm} \times \Big( \E\Big[  \int_{|y|<c_\psi}  |u_{\Delta y}(r,y)|^4\,dy \Big] \Big)^\frac{1}{2}\,dr\,ds\notag \\
  & \le \frac{C}{\xi} \delta_0^\frac{1}{4}T \Big( \sup_{\Delta y >0}\sup_{0\le t\le T}
  \E \Big[ \|u_{\Delta y}(t,\cdot)\|_4^4 \Big]\Big)^\frac{1}{2} \le \frac{C}{\xi}\delta_0^\frac{1}{4}.\label{esti:i33}
\end{align}
We combine \eqref{esti:i31}-\eqref{esti:i33} in \eqref{esti:i3} and have 
\begin{align}
 I_3 +J_3  &\le - \E\Big[ \int_{\Pi_T}\int_{\R} \beta^{\prime\prime}\big(u(r,x)-u_{\Delta y}(r,y)\big) \sigma(u_{\Delta y}(r,y))\sigma(u(r,x))
  \psi(r,y)\varrho_{\delta}(x-y)\,dx\,dy\,dr\Big]\notag \\
  & \hspace{3cm} + C\big( \frac{\delta_0}{\xi} + \frac{l}{\xi^2}\big) + C\frac{\delta_0^\frac{1}{4}}{\xi},\label{esti:i3+j3-final}
\end{align}
and combining \eqref{esti:i4+j4-final} and \eqref{esti:i3+j3-final} gives
\begin{align}
& I_3 +J_3 +  I_4 +J_4 
\\& \leq 
 \frac{1}{2}  \E \Big[\int_{\Pi_T} \int_{\R} \beta^{\prime\prime}\big(u_{\Delta y}(s,y)-u(s,x)\big) \Big[\sigma(u(s,x)) - \sigma(u_{\Delta y}(s,y))\Big]^2  
\psi(s,y)\varrho_{\delta}(x-y) \,dy \,ds\,dx \Big] \notag \\
 & \hspace{3cm} + C (\frac{\delta_0+\delta_0^\frac{1}{4}}{\xi} + \frac{\sqrt{\delta_0}}{\xi^2}  + \frac{l}{\xi^2}). 
 \end{align}
Collecting all the above results, we have proved the following Lemma.
\begin{lem}\label{I3+J3+I4+J4}
\begin{align*}
& \limsup_{l \rightarrow 0}\,\limsup_{\delta_0 \rightarrow 0} \, (I_3 +J_3 +  I_4 +J_4)
\\& \leq 
 \frac{1}{2}  \E \Big[\int_{\Pi_T} \int_{\R} \beta^{\prime\prime}\big(u_{\Delta y}(s,y)-u(s,x)\big) \Big[\sigma(u(s,x)) - \sigma(u_{\Delta y}(s,y))\Big]^2  
\psi(s,y)\varrho_{\delta}(x-y) \,dy \,ds\,dx \Big]. 
 \end{align*}
\end{lem}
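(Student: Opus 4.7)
The plan is to combine the estimates \eqref{esti:i3+j3-final} and \eqref{esti:i4+j4-final} already in hand for $I_3+J_3$ and $I_4+J_4$, and then take the iterated limit $\limsup_{l\to 0}\limsup_{\delta_0\to 0}$. The algebraic heart of the lemma is the polarization identity $\tfrac12\sigma^2(u)+\tfrac12\sigma^2(u_{\Delta y})-\sigma(u)\sigma(u_{\Delta y})=\tfrac12[\sigma(u)-\sigma(u_{\Delta y})]^2$: the two $\tfrac12\sigma^2$ terms produced by the It\^o--L\'evy quadratic-variation corrections in $I_4$ and $J_4$ combine with the cross term $-\sigma(u)\sigma(u_{\Delta y})$ coming out of the It\^o bracket inside $I_{3,2}$ to produce the stated square, multiplied by $\beta''(u_{\Delta y}-u)\psi(s,y)\varrho_\delta(x-y)$.

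For $J_3$, the vanishing is immediate from the martingale identity \eqref{independency}, since the integrand multiplying $dW(s)$ is adapted. For $I_3$, I would rewrite the term via the stochastic convolution $\mathcal{M}[\beta',\phi_{\delta,\delta_0}]$, apply the It\^o product rule to $\mathcal{M}[\beta',\phi_{\delta,\delta_0}]\cdot J_l(u_{\Delta y}-k)$, and split the result into a flux piece $I_{3,1}$, a covariation piece $I_{3,2}$ carrying $\sigma(u)\sigma(u_{\Delta y})$, and a third-derivative piece $I_{3,3}$. The pieces $I_{3,1}$ and $I_{3,3}$ are controlled by the $L^\infty$ estimates \eqref{eq:l-infinity bound-1}--\eqref{eq:l-infinity bound-2} on $\mathcal{M}$, combined with the spatial BV bound \eqref{prop:bv} and the fourth-moment bound \eqref{uni:moment}; these contribute errors of order $\delta_0^{1/4}\xi^{-3/4}$ and $\delta_0^{1/4}\xi^{-1}$ respectively. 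For $I_4$ and $J_4$, the plan is to use the time-continuity estimate of Lemma \ref{continuity in time} to collapse the $t\leftrightarrow s$ discrepancy and the bound $\|\beta'''\|_\infty\le C\xi^{-2}$ to swap the $J_l$-convolution off $\beta''$ at a cost of $O(l\xi^{-2})$.

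Summing the four bounds yields
\[
I_3+J_3+I_4+J_4\le\tfrac12\E\Big[\int_{\Pi_T}\!\!\int_\R\beta''(u_{\Delta y}(s,y)-u(s,x))[\sigma(u(s,x))-\sigma(u_{\Delta y}(s,y))]^2\psi(s,y)\varrho_\delta(x-y)\,dx\,dy\,ds\Big]+R(\xi,\delta_0,l),
\]
with $R(\xi,\delta_0,l)=C\big(\tfrac{\delta_0+\delta_0^{1/4}}{\xi}+\tfrac{\sqrt{\delta_0}}{\xi^2}+\tfrac{l}{\xi^2}\big)$. For fixed $\xi>0$, $\limsup_{\delta_0\to 0}R=Cl\xi^{-2}$ and then $\limsup_{l\to 0}(Cl\xi^{-2})=0$, so the iterated $\limsup$ eliminates the remainder. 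The main obstacle is the tension between the $\xi^{-2}$ blow-up of $\|\beta'''\|_\infty$ and the $O(l)$ size of the $J_l$-regularization error, compounded by the $O(\xi^{-3/2}\delta_0^{-3/2})$ blow-up of $\mathcal{M}[\beta'',\phi_{\delta,\delta_0}]$ in $L^\infty$; all of these are tamed by keeping $\xi$ fixed, sending $\delta_0\to 0$ first so that the small-time behaviour of the stochastic convolution is absorbed, and sending $l\to 0$ last.
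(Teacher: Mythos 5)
Your proposal follows essentially the same route as the paper: $J_3=0$ via \eqref{independency}, the decomposition of $I_3$ through $\mathcal{M}[\beta^\prime,\phi_{\delta,\delta_0}]$ and the It\^{o} product rule into $I_{3,1}+I_{3,2}+I_{3,3}$ with the same error rates, the $O(l\xi^{-2})$ and $O(\sqrt{\delta_0}\xi^{-2})$ costs for $I_4$ and $J_4$, and the polarization of $\tfrac12\sigma^2(u)+\tfrac12\sigma^2(u_{\Delta y})-\sigma(u)\sigma(u_{\Delta y})$ into $\tfrac12[\sigma(u)-\sigma(u_{\Delta y})]^2$, with the remainder killed by sending $\delta_0\to 0$ before $l\to 0$ at fixed $\xi$. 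This matches the paper's argument in both structure and detail.
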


Let us turn our attention to estimate the stochastic integral terms corresponding to jump noise.  Note that, thanks to \eqref{independency}
\begin{align*}
 J_6= & \int_{\Pi_T}\int_{\R^2}\int_0^1 \E \Big[ J_l(u(t,x)-k) \int_{s=t}^{s=t+ \delta_0} \int_{|z|>0} \eta(u_{\Delta y}(s,y);z) 
 \beta^{\prime}\big(u_{\Delta y}(s,y)-k + \lambda \eta(u_{\Delta y}(s,y);z) \big) \notag \\
 & \hspace{6cm} \times \phi_{\delta,\delta_0}(t,x,s,y) \widetilde{N}(dz,ds) \Big] \,d\lambda\,dk\,dx\,dy\,dt \notag \\
 &=0.
\end{align*}
 We  define 
\begin{align*}
\mathcal{J}[\beta, \phi_{\delta,\delta_0}](s; y,k) :=&
\int_{\Pi_T}\int_{|z|>0} \Big(\beta\big(u(r,x) +\eta(u(r,x); z)-k\big)
 -\beta\big(u(r,x)-k\big)\Big) \\
&\hspace{3cm}\times\phi_{\delta,\delta_0}(r,x,s,y)\widetilde{N}(dz,dr)\,dx. 
\end{align*} 
In view of Lemmas $5.4$ and $5.5$ of \cite{BisKarlMaj}, one can deduce the following: 
\begin{align*}
    &\partial_k  \mathcal{J}[\beta, \phi_{\delta,\delta_0}](s, y, k) =  \mathcal{J}[-\beta^\prime, \phi](s, y, k)\\
    & \partial_{y}  \mathcal{J}[\beta, \phi_{\delta,\delta_0}](s, y, k) =  \mathcal{J}[\beta, \partial_{y}\phi](s, y, k).
   \end{align*}
Moreover, since $ \beta=\beta_\xi$, there exists a constant $C=C(\psi )$ such that
 \begin{align}
 &\sup_{0\le s\le T} \E\Big[|| \mathcal{J}[\beta^{\prime},\phi_{\delta,\delta_0}](s,\cdot,\cdot)||_{L^\infty(\R\times \R)}^2\Big] \le
  \frac{C(\psi )}{ \xi^\frac{3}{2}\,\delta_0^\frac{3}{2}},\label{eq:l-infinity bound-1-1} \\
&\sup_{0\le s\le T} \E \Big[|| \mathcal{J}[\beta^{\prime\prime},\phi_{\delta,\delta_0}](s,\cdot,\cdot)||_{L^\infty(\R\times \R)}^2\Big] \le
  \frac{C(\psi )}{ \xi^2\,\delta_0^\frac{3}{2}}.\label{eq:l-infinity bound-2-2}
 \end{align}
 Arguing similarly as we have done for the term $I_3$, we arrive at 
 
 \begin{align}
  I_6=& \E\Big[ \int_{\Pi_T} \int_{\R}
 \mathcal{J}[\beta^{\prime},\phi_{\delta,\delta_0}](s,y,k) \Big( \int_{r=s-\delta_0}^{s} 
 \frac{f(u_{\Delta y}(r,y))-f(u_{\Delta y}(r,y+\Delta y))}{\Delta y} \notag \\
 & \hspace{7cm} \times  J_l\big(u_{\Delta y}(r,y)-k\big)\,dr\Big)\,dy\,ds\,dk\Big] \notag \\
 & +  \E\Big[ \int_{\Pi_T} \int_{\R}\mathcal{J}[\beta^{\prime\prime},\phi_{\delta,\delta_0}](s,y,k) \Big\{ \int_{r=s-\delta_0}^{s} \int_{|z|>0} \int_0^1 
 J_l\big( u_{\Delta y}(r,y) -k + \lambda \eta(u_{\Delta y}(r,y);z)\big) \notag \\
 & \hspace{6cm} \times (1-\lambda) \eta^2(u_{\Delta y}(r,y);z)\,\,d\lambda\,m(dz)\,dr \Big\}\,dk\,dy\,ds \Big] \notag \\
 & +  \E\Big[ \int_{\Pi_T}\int_{\R^2} \int_{r=s-\delta_0}^{s} \int_{|z|>0} \Big\{ \beta\big( u(r,x)-k + \eta(u(r,x);z)\big) - \beta(u(r,x)-k)\Big\} \notag \\
 & \hspace{4cm} \times \Big( J_l \big( u_{\Delta y}(r,y) -k +\eta(u_{\Delta y}(r,y);z)\big)- J_l( u_{\Delta y}(r,y)-k)\Big)  \notag \\
 & \hspace{5.5cm} \times \rho_{\delta_0}(r-s)\psi(s,y)\,\varrho_{\delta}(x-y)\,m(dz)\,dr\,dx\,dk\,dy\,ds\Big] \notag \\
 &:=  I_{6,1}+  I_{6,2} +  I_{6,3}. \label{sum:i61+i62+i63}
 \end{align}
Observe that, in view of \eqref{eq:l-infinity bound-1-1}, \eqref{eq:l-infinity bound-2-2} and uniform moment estimate \eqref{uni:moment} along with the assumptions \ref{A4} and \ref{A5}, 
 \begin{align}
 \label{esti:i61+i62}
 \begin{aligned}
  I_{6,1} \le C \|f^\prime\|_{\infty} \frac{\delta_0^\frac{1}{4}}{\xi^\frac{3}{4}}, \quad \text{and} \,\,\,\,
   I_{6,3} \le C \frac{C}{\xi}\delta_0^\frac{1}{4}.
   \end{aligned} 
 \end{align}
 Next we want to estimate the term $I_{6,3}$.  A similar argument as in \cite[Lemma 5.6]{BisKarlMaj} leads to 
 \begin{align}
  I_{6,3} & \le \E \Big[ \int_{r=0}^T \int_{\R^2} \int_{s=r}^{r+ \delta_0} \int_{\R}\int_{|z|>0} \int_0^1 \int_0^1 \beta^{\prime\prime}\big( u(r,x)-k + \lambda \eta(u(r,x);z) \big) 
  \eta(u(r,x);z)\eta(u_{\Delta y}(r,y);z) \notag \\
  & \hspace{5.5cm} \times  J_l\big( u_{\Delta y}(r,y)-k + \theta \eta(u_{\Delta y}(r,y);z)\big) \rho_{\delta_0}(r-s)\psi(s,y) \notag \\
  & \hspace{6cm} \times \,\varrho_{\delta}(x-y)
  \,d\theta\,d\lambda\,m(dz)\,dx\,ds\,dk\,dy\,dr\Big] + C \frac{\delta_0}{\xi} \notag \\
  & \le \E \Big[ \int_{0}^T \int_{\R^3}\int_{|z|>0} \int_0^1 \int_0^1 \beta^{\prime\prime}\big( u(r,x)-k + \lambda \eta(u(r,x);z) \big) 
  \eta(u(r,x);z)\eta(u_{\Delta y}(r,y);z) \notag \\
  & \hspace{5.5cm} \times  J_l\big( u_{\Delta y}(r,y)-k + \theta \eta(u_{\Delta y}(r,y);z)\big)\psi(r,y) \notag \\
  & \hspace{6cm} \times \,\varrho_{\delta}(x-y)
  \,d\theta\,d\lambda\,m(dz)\,dx\,dk\,dy\,dr\Big] + C \frac{\delta_0}{\xi} \notag \\
  & = \E \Big[ \int_{0}^T \int_{\R^3}\int_{|z|>0} \Big\{ \beta\big( u(r,x)-k + \eta(u(r,x);z)\big) - \beta(u(r,x)-k)\Big\} \notag \\
 & \hspace{4cm} \times \Big( J_l \big( u_{\Delta y}(r,y) -k +\eta(u_{\Delta y}(r,y);z)\big)- J_l( u_{\Delta y}(r,y)-k)\Big)  \notag \\
 & \hspace{5.5cm} \times \psi(r,y)\,\varrho_{\delta}(x-y)\,m(dz)\,dx\,dk\,dy\,dr\Big] + C \frac{\delta_0}{\xi} \notag \\
 & \le \E\Big[\int_{\Pi_{T}}\int_{\R} \int_{|z|>0}  \Big\{ \beta\big(u(r,x)+ \eta(u(r,x);z)
  - u_{\Delta y}(r,y)-\eta(u_{\Delta y}(r,y);z)\big) \notag \\
  & \hspace{2.5cm}-\beta\big(u(r,x)-u_{\Delta y}(r,y)-\eta(u_{\Delta y}(r,y);z)\big)-\beta\big(u(r,x)+ \eta(u(r,x);z)-u_{\Delta y}(r,y)\big)
   \notag \\
  &  \hspace{3cm} + \beta \big(u(r,x)-u_{\Delta y}(r,y)\big) \Big\}\psi(r,y)\,\varrho_{\delta}(x-y)\,m(dz)\,dx\,dy\,dr\Big] + C\big(l +  \frac{\delta_0}{\xi}\big). 
  \label{esti:i63}
 \end{align}
 Thus, combining \eqref{esti:i61+i62} and \eqref{esti:i63} in \eqref{sum:i61+i62+i63} and keeping in mind that $J_6=0$, we have 
 \begin{align}
  I_6 + J_6 & \le \E\Big[\int_{\Pi_{T}}\int_{\R} \int_{|z|>0}  \Big\{ \beta\big(u(r,x)+ \eta(u(r,x);z)
  - u_{\Delta y}(r,y)-\eta(u_{\Delta y}(r,y);z)\big) \notag \\
  & \hspace{2.5cm}-\beta\big(u(r,x)-u_{\Delta y}(r,y)-\eta(u_{\Delta y}(r,y);z)\big)-\beta\big(u(r,x)+ \eta(u(r,x);z)-u_{\Delta y}(r,y)\big)
   \notag \\
  &  \hspace{2.5cm} + \beta \big(u(r,x)-u_{\Delta y}(r,y)\big) \Big\}\psi(r,y)\,\varrho_{\delta}(x-y)\,m(dz)\,dx\,dy\,dr\Big] + C \Big( \frac{\delta_0^\frac{1}{4}}{\xi^\frac{3}{4}}
  + \frac{\delta_0^\frac{1}{4}}{\xi} + l\Big).\label{esti:i6+j6-final}
 \end{align}
Now we focus on the term $J_7$. Thanks to uniform moment estimate \eqref{uni:moment}, Lemma~\ref{continuity in time}, assumptions \ref{A4} and \ref{A5}, and the bound of $\beta^{\prime\prime}$ and $\beta^{\prime\prime\prime}$, we obtain
\begin{align}
 J_7= & \E\Big[ \int_{\Pi_T}\int_{\R}\int_{\Pi_T} \int_{|z|>0} \int_0^1 (1-\lambda) \eta^2(u_{\Delta y}(s,y);z) \Big\{ \beta^{\prime\prime} \big( u_{\Delta y}(s,y)
 + \lambda \eta(u_{\Delta y}(s,y);z)-u(t,x) + k \big) \notag \\
 & \hspace{4 cm}- \beta^{\prime\prime} \big( u_{\Delta y}(s,y)
 + \lambda \eta(u_{\Delta y}(s,y);z)-u(s,x) + k \big)\Big\} \phi_{\delta,\delta_0}(t,x,s,y) \notag \\
 & \hspace{5cm}\times J_l(k)\,d\lambda\,m(dz)\, ds\,dy\,dk\,dx\,dt \Big] \notag \\
 & -  \E\Big[ \int_{\Pi_T}\int_{\R^2} \int_{|z|>0} \int_0^1 (1-\lambda) \eta^2(u_{\Delta y}(s,y);z) \beta^{\prime\prime} \big( u_{\Delta y}(s,y)
 + \lambda \eta(u_{\Delta y}(s,y);z)-u(s,x) + k \big) \notag \\
 & \hspace{4cm} \times \psi(s,y) \big( 1- \int_0^T \rho_{\delta_0}(t-s)\,dt \big) \rho_{\delta}(x-y) J_l(k)\,d\lambda\,m(dz)\, ds\,dy\,dk\,dx \Big] \notag \\
 & + \E\Big[ \int_{\Pi_T}\int_{\R^2} \int_{|z|>0} \int_0^1 (1-\lambda) \eta^2(u_{\Delta y}(s,y);z) \beta^{\prime\prime} \big( u_{\Delta y}(s,y)
 + \lambda \eta(u_{\Delta y}(s,y);z)-u(s,x) + k \big) \notag \\
 & \hspace{4cm} \times \psi(s,y) \rho_{\delta}(x-y) J_l(k)\,d\lambda\,m(dz)\, ds\,dy\,dk\,dx \Big] \notag \\
 & \le \E\Big[ \int_{\Pi_T}\int_{\R^2} \int_{|z|>0} \int_0^1 (1-\lambda) \eta^2(u_{\Delta y}(s,y);z) \beta^{\prime\prime} \big( u_{\Delta y}(s,y)
 + \lambda \eta(u_{\Delta y}(s,y);z)-u(s,x) + k \big) \notag \\
 & \hspace{4cm} \times \psi(s,y) \rho_{\delta}(x-y) J_l(k)\,d\lambda\,m(dz)\, ds\,dy\,dk\,dx \Big] \notag \\ 
 & + \frac{C}{\xi^2} \E\Big[\int_{\Pi_T}\int_{\Pi_T} \int_{|z|>0} \eta^2(u_{\Delta y}(s,y);z)\big|u(t,x)-u(s,x)\big| 
 \psi(s,y) \varrho_{\delta}(x-y) \rho_{\delta_0}(t-s)\,m(dz)\,dy \,ds\,dx\,dt \Big] \notag \\
 & + \frac{C}{\xi} \E\Big[\int_{s=0}^{\delta_0}\int_{R^2} \int_{|z|>0} \eta^2(u_{\Delta y}(s,y);z) 
 \psi(s,y) \varrho_{\delta}(x-y)\,m(dz)\,dy\,dx\,ds \Big] \notag \\
 &  \le \E\Big[ \int_{\Pi_T}\int_{\R^2} \int_{|z|>0} \int_0^1 (1-\lambda) \eta^2(u_{\Delta y}(s,y);z) \beta^{\prime\prime} \big( u_{\Delta y}(s,y)
 + \lambda \eta(u_{\Delta y}(s,y);z)-u(s,x) + k \big) \notag \\
 & \hspace{4cm} \times \psi(s,y) \rho_{\delta}(x-y) J_l(k)\,d\lambda\,m(dz)\, ds\,dy\,dk\,dx \Big] \notag \\ 
 & + \frac{C}{\xi^2} \E\Big[\int_{\Pi_T}\int_{\Pi_T} \int_{|z|>0} \big( 1+ u^2_{\Delta y}(s,y)\big) (1\wedge |z|^2)\big|u(t,x)-u(s,x)\big| 
 \psi(s,y) \varrho_{\delta}(x-y) \notag \\
 & \hspace{5cm} \times \rho_{\delta_0}(t-s)\,m(dz)\,dy \,ds\,dx\,dt \Big] \notag \\
 & + \frac{C}{\xi} \E\Big[\int_{s=0}^{\delta_0}\int_{R^2} \int_{|z|>0} \big( 1+ u^2_{\Delta y}(s,y)\big) (1\wedge |z|^2)
 \psi(s,y) \varrho_{\delta}(x-y)\,m(dz)\,dy\,dx\,ds \Big] \notag \\
 & \le  \E\Big[ \int_{\Pi_T}\int_{\R^2} \int_{|z|>0} \int_0^1 (1-\lambda) \eta^2(u_{\Delta y}(s,y);z) \beta^{\prime\prime} \big( u_{\Delta y}(s,y)
 + \lambda \eta(u_{\Delta y}(s,y);z)-u(s,x) + k \big) \notag \\
 & \hspace{4cm} \times \psi(s,y) \rho_{\delta}(x-y) J_l(k)\,d\lambda\,m(dz)\, ds\,dy\,dk\,dx \Big]
  +  C \frac{\sqrt{\delta_0}}{\xi^2} +  C \frac{\delta_0}{\xi} \notag \\
& \le \E\Big[ \int_{\Pi_T}\int_{\R} \int_{|z|>0} \int_0^1 (1-\lambda) \eta^2(u_{\Delta y}(s,y);z) \beta^{\prime\prime} \big( u_{\Delta y}(s,y)
 + \lambda \eta(u_{\Delta y}(s,y);z)-u(s,x) \big) \notag \\
 & \hspace{4cm} \times \psi(s,y) \rho_{\delta}(x-y)\,d\lambda\,m(dz)\,ds\,dy\,dx \Big]
  +  C \frac{\sqrt{\delta_0}}{\xi^2} +  C \frac{\delta_0}{\xi} + C \frac{l}{\xi^2}. \label{esti:j7-final}
\end{align}
Again, one can show that (similar to the estimation of $I_4$ term)
\begin{align}
 I_7 & \le  \E\Big[ \int_{\Pi_T}\int_{\R} \int_{|z|>0} \int_0^1 (1-\lambda) \eta^2(u(s,x);z) \beta^{\prime\prime} \big( u(s,x)- u_{\Delta y}(s,y)
 + \lambda \eta(u(s,x);z) \big) \notag \\
 & \hspace{4cm} \times \psi(s,y) \rho_{\delta}(x-y)\,d\lambda\,m(dz)\,dy\,dx\,ds \Big] + C \big(\frac{\delta_0}{\xi}
 + \frac{\sqrt{\delta_0}}{\xi^2}  + \frac{l}{\xi^2}\big). \label{esti:i7-final}
\end{align}
Thus, from \eqref{esti:j7-final} and \eqref{esti:i7-final}, we have 
\begin{align}
 I_7 + J_7 & \le  \E\Big[ \int_{\Pi_T}\int_{\R} \int_{|z|>0} \int_0^1 (1-\lambda) \Big\{ \eta^2(u_{\Delta y}(s,y);z) \beta^{\prime\prime} \big( u_{\Delta y}(s,y)
 + \lambda \eta(u_{\Delta y}(s,y);z)-u(s,x) \big) \notag \\
 & \hspace{3.5cm} + \eta^2(u(s,x);z) \beta^{\prime\prime} \big( u(s,x)- u_{\Delta y}(s,y)
 + \lambda \eta(u(s,x);z) \big) \Big\} \psi(s,y) \notag \\
 & \hspace{4cm} \times \rho_{\delta}(x-y)\,d\lambda\,m(dz)\,dy\,dx\,ds \Big]  + C \big(\frac{\delta_0}{\xi}
 + \frac{\sqrt{\delta_0}}{\xi^2}  + \frac{l}{\xi^2}\big), \label{esti:i7+j7-final}
\end{align}
and hence adding two estimates \eqref{esti:i6+j6-final} and \eqref{esti:i7+j7-final} yields
 \begin{align}
  & I_6 + I_7 + J_6 + J_7 \notag \\
  & \le  \E\Big[ \int_{\Pi_T} \int_{\R}\int_{|z|>0} \Big\{ \beta\big(u(r,x)+ \eta(u(r,x);z) - u_{\Delta y}(r,y)-\eta(u_{\Delta y}(r,y);z)\big)
  -\beta \big(u(r,x)-u_{\Delta y}(r,y)\big) \notag \\
  &\hspace{3cm} -\big( \eta(u(r,x);z)-\eta(u_{\Delta y}(r,y);z)\big) \beta^\prime \big(u(r,x)-u_{\Delta y}(r,y)\big) \Big\}\psi(r,y)\notag \\
  & \hspace{3.5cm} \times \varrho_{\delta}(x-y)\,m(dz)\,dx\,dy\,dr\Big] + C \Big( \frac{\delta_0^\frac{1}{4}}{\xi^\frac{3}{4}}
  + \frac{\delta_0^\frac{1}{4}}{\xi} + l\Big) + C \Big(\frac{\delta_0}{\xi}
 + \frac{\sqrt{\delta_0}}{\xi^2}  + \frac{l}{\xi^2}\Big).
 \end{align}
Thus, we have proved the following Lemma: 
\begin{lem}\label{I6+J6+I7+J7}
\begin{align*}
& \limsup_{l \rightarrow 0}\,\limsup_{\delta_0 \rightarrow 0} \,\big(I_6 +J_6 +  I_7 +J_7\big) \\ 
& \leq \E\Big[ \int_{\Pi_T} \int_{\R}\int_{|z|>0} \Big\{ \beta\big(u(r,x)+ \eta(u(r,x);z) - u_{\Delta y}(r,y)-\eta(u_{\Delta y}(r,y);z)\big)
  -\beta \big(u(r,x)-u_{\Delta y}(r,y)\big) \notag \\
  &\hspace{2cm} -\big( \eta(u(r,x);z)-\eta(u_{\Delta y}(r,y);z)\big) \beta^\prime \big(u(r,x)-u_{\Delta y}(r,y)\big) \Big\}\psi(r,y)
  \varrho_{\delta}(x-y)\,m(dz)\,dx\,dy\,dr\Big]. 
 \end{align*}
\end{lem}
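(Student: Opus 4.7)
My plan is first to observe that $J_6=0$ by the martingale property \eqref{independency}: after carrying out the $(t,x)$-integration against the $J_l(u(t,x)-k)$ factor (which is $\mathcal F_s$-measurable for $s\ge t$), the compensated Poisson integral in $s$ has a predictable integrand whose expectation vanishes. This reduces the task to analysing $I_6$, $I_7$, $J_7$. For $I_6$ I would introduce the anti-derivative process $\mathcal J[\beta,\phi_{\delta,\delta_0}](s;y,k)$ (as is done just below the statement of the $L^\infty$ estimates \eqref{eq:l-infinity bound-1-1}--\eqref{eq:l-infinity bound-2-2}) and use the identity $\partial_k\mathcal J[\beta,\phi]=\mathcal J[-\beta',\phi]$ to rewrite $I_6$ as an expectation involving $\mathcal J[\beta',\phi_{\delta,\delta_0}]$ times $J_l(u_{\Delta y}(s,y)-k)-J_l(u_{\Delta y}(s-\delta_0,y)-k)$ (the support of $\rho_{\delta_0}$ being $[-\delta_0,0]$ is what produces this difference).

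Next I would apply the It\^o--L\'evy product formula to $J_l(u_{\Delta y}(\cdot,y)-k)$ on the interval $[s-\delta_0,s]$, using the scheme \eqref{eq:levy_stochconservation_ discrete_laws_1} for $du_{\Delta y}$. This splits $I_6$ into three contributions: a deterministic flux piece $I_{6,1}$, a pure-jump It\^o-correction piece $I_{6,2}$ carrying $\mathcal J[\beta'',\phi]$ and the quadratic factor $(1-\lambda)\eta^2$, and a diagonal cross-product piece $I_{6,3}$ built from the joint Poisson jump of $u$ and $u_{\Delta y}$. The expected outcome is that $I_{6,1}$ and $I_{6,2}$ contribute only error terms: for $I_{6,1}$ apply Cauchy--Schwarz using the $L^\infty_{y,k}$ bound \eqref{eq:l-infinity bound-1-1} of order $\xi^{-3/4}\delta_0^{-3/4}$ and the BV-in-space bound \eqref{inq:discrete-bv-1} on $u_{\Delta y}$ (which absorbs the $\Delta y^{-1}$ coming from the finite-difference flux) to get $O(\delta_0^{1/4}\xi^{-3/4})$; for $I_{6,2}$ use \eqref{eq:l-infinity bound-2-2} together with \ref{A4}, \ref{A5} and the uniform second-moment bound \eqref{uni:moment} to get $O(\delta_0^{1/4}\xi^{-1})$. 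Both vanish in the iterated limit $\delta_0\to 0$.

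The main piece is $I_{6,3}$ together with $I_7+J_7$. For $I_{6,3}$ I would first freeze time, replacing $\phi_{\delta,\delta_0}(t,x,s,y)$ by $\psi(r,y)\varrho_\delta(x-y)$ via Lemma~\ref{continuity in time} (losing at worst $C(\delta_0/\xi+l)$, exactly as in the treatment of $I_{3,2}$), and then let $l\to 0$ to recognise the integrand as the finite difference $[\beta(a+\eta_1-b-\eta_2)-\beta(a-b-\eta_2)]-[\beta(a+\eta_1-b)-\beta(a-b)]$, where $a=u(r,x)$, $b=u_{\Delta y}(r,y)$, $\eta_1=\eta(u;z)$, $\eta_2=\eta(u_{\Delta y};z)$. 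For $I_7$ and $J_7$ I would proceed exactly as for $I_4,J_4$: collapse $\rho_{\delta_0}$ and $\varrho_\delta\otimes\psi$ onto the diagonal (using Lemma~\ref{continuity in time} and the moment estimate, picking up only controllable $\delta_0,l$ errors), then apply Taylor's integral identity $\int_0^1(1-\lambda)y^2\beta''(x+\lambda y)\,d\lambda = \beta(x+y)-\beta(x)-y\beta'(x)$ and the evenness of $\beta$/oddness of $\beta'$ to rewrite, after the limit,
\begin{align*}
I_7 &\longrightarrow \E\!\int\!\bigl[\beta(a+\eta_1-b)-\beta(a-b)-\eta_1\beta'(a-b)\bigr]\psi\,\varrho_\delta\,m(dz)\,dx\,dy\,dr, \\
J_7 &\longrightarrow \E\!\int\!\bigl[\beta(a-b-\eta_2)-\beta(a-b)+\eta_2\beta'(a-b)\bigr]\psi\,\varrho_\delta\,m(dz)\,dx\,dy\,dr.
\end{align*}

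Summing the three main terms, the four ``partial'' $\beta$-contributions from $I_{6,3}$ (which are $\beta(a+\eta_1-b-\eta_2)-\beta(a-b-\eta_2)-\beta(a+\eta_1-b)+\beta(a-b)$) and those from $I_7+J_7$ (which are $\beta(a+\eta_1-b)+\beta(a-b-\eta_2)-2\beta(a-b)+(\eta_2-\eta_1)\beta'(a-b)$) cancel in pairs, leaving exactly $\beta(a+\eta_1-b-\eta_2)-\beta(a-b)-(\eta_1-\eta_2)\beta'(a-b)$, which is the asserted right-hand side. The main obstacle will be the $I_{6,1}$--$I_{6,2}$ estimates: one must balance the factor $\Delta y^{-1}$ from the finite-difference flux and the blow-ups $\xi^{-3/4}\delta_0^{-3/4}$, $\xi^{-1}\delta_0^{-3/4}$ of the anti-derivative processes against the BV bound and the thin time window $\delta_0$, which is exactly the bookkeeping built into the $L^\infty$ estimates \eqref{eq:l-infinity bound-1-1}--\eqref{eq:l-infinity bound-2-2}; all other steps are reductions of jump-noise integrals to previously handled Brownian-noise computations via the $\mathcal J[\cdot,\cdot]$ formalism.
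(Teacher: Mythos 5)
Your proposal follows the paper's proof essentially step for step: $J_6=0$ by \eqref{independency}; $I_6$ is handled through the auxiliary process $\mathcal{J}[\cdot,\phi_{\delta,\delta_0}]$ and the It\^{o}--L\'{e}vy product rule on $[s-\delta_0,s]$, with the flux and jump-correction pieces estimated exactly as in \eqref{esti:i61+i62} via \eqref{eq:l-infinity bound-1-1}--\eqref{eq:l-infinity bound-2-2} and the BV bound \eqref{inq:discrete-bv-1}, the cross term $I_{6,3}$ collapsed onto the diagonal as in \eqref{esti:i63}, and $I_7,J_7$ treated like $I_4,J_4$, your explicit Taylor-identity cancellation being precisely the algebra behind the paper's combination of \eqref{esti:i6+j6-final} and \eqref{esti:i7+j7-final}. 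The only (immaterial) slip is that the martingale factor multiplying $J_l(u_{\Delta y}(s,y)-k)-J_l(u_{\Delta y}(s-\delta_0,y)-k)$ should be $\mathcal{J}[\beta,\phi_{\delta,\delta_0}]$ rather than $\mathcal{J}[\beta',\phi_{\delta,\delta_0}]$, since the primed versions arise only after the integrations by parts in $k$.
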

\noindent Now we are in a position to combine the Lemma's \ref{I1+J1}, \ref{I2+J2}, \ref{I5+J5}, \ref{I3+J3+I4+J4} and \ref{I6+J6+I7+J7}. The result is 
\begin{align}
& 0 \le  \E\Big[\int_{\R}\int_{\R} \beta\big(u_0(x)-u_{\Delta y}(0,y)\big) \varrho_{\delta}(x-y)\psi(0,y)\,dy\,dx\Big] \notag \\
& +  \E\Big[\int_{\Pi_T}\int_{\R} \beta \big(u_{\Delta y}(s,y)-u(s,x)\big) \partial_s \psi(s,y)
\varrho_{\delta}(x-y)\,dx\,dy\,ds \Big] \notag 
\\& +
\E\Big[\int_{\Pi_T}\int_{\R} \Big\{f^\beta \big(u_{\Delta y}(s,y),u(s,x)\big)- f^\beta \big( u(s,x),u_{\Delta y}(s,y)\big) \Big\} \partial_y \varrho_{\delta}(x-y) \psi(s,y)\,dx\,dy\,ds\Big] \notag
\\& \quad + \E\Big[\int_{\Pi_T}\int_{\R} f^\beta \big(u(s,x),u_{\Delta y}(s,y)\big)\varrho_{\delta}(x-y)\partial_y \psi(s,y)\,dx\,dy\,ds\Big] \notag \\
& \quad + \frac{1}{2} \E\Big[\int_{\Pi_T}\int_{\R} \beta^{\prime\prime}\big(u_{\Delta y}(s,y)-u(s,x)\big) \big(\sigma(u_{\Delta y}(s,y)-\sigma(u(s,x))\big)^2 
\varrho_{\delta}(x-y)\psi(s,y)\,dx\,dy\,ds\Big] \notag \\ 
&  + \E\Big[ \int_{\Pi_T} \int_{\R}\int_{|z|>0} \Big\{ \beta\big(u(r,x)+ \eta(u(r,x);z) - u_{\Delta y}(r,y)-\eta(u_{\Delta y}(r,y);z)\big)
  -\beta \big(u(r,x)-u_{\Delta y}(r,y)\big)\notag \\
  &  \hspace{2cm} -\big( \eta(u(r,x);z)-\eta(u_{\Delta y}(r,y);z)\big) \beta^\prime \big(u(r,x)-u_{\Delta y}(r,y)\big) \Big\}\psi(r,y) \varrho_{\delta}(x-y)\,m(dz)\,dx\,dy\,dr\Big] \notag \\
  &\hspace{8cm} + C \frac{\Delta y}{\delta} \notag \\
&  \hspace{2cm}\notag \\
& := \mathcal{A}_1 + \mathcal{A}_2 + \mathcal{A}_3 + \mathcal{A}_4 + \mathcal{A}_5 +  \mathcal{A}_6 + C \frac{\Delta y}{\delta}. \label{esti:0}
\end{align}
Note that since $\Big| f^\beta(x,y)-|y-x|\Big| \leq C\,\xi$, $\lim_{\xi}\mathcal{A}_3 =0$.
Let us consider the term $\mathcal{A}_6$. In fact, by rewriting $\mathcal{A}_6$, we have 
\begin{align}
 \mathcal{A}_6 =\E \Big[\int_{\Pi_T}\int_{\R}\Big( \int_{|z|>0} \int_{\theta=0}^1 b^2
(1-\theta)\beta^{\prime\prime}(a+\theta\,b) \,d\theta\,m(dz)\Big)\,\psi(t,y)\varrho_{\delta}(x-y)\,dx\,dy\,dt\Big],\label{eq:endgame}
\end{align}
where $ a=u(t,x)-u_{\Delta y}(t,y)$ and $b=\eta(u(t,x);z)-\eta(u_{\Delta y}(t,y);z)$. Note that, in view of assumption \ref{A5},
\begin{align}
b^2\beta^{\prime\prime}(a+\theta\,b)
&=\big(\eta(u(t,x);z)-\eta(u_{\Delta}(t,y);z)\big)^2\,
\beta^{\prime\prime}\Big(a+\theta\,\big(\eta(u(t,x);z)-\eta(u_{\Delta y}(t,y);z)\big)\Big)\notag \\
& \leq \big|u(t,x)-u_{\Delta y}(t,y)\big|^2 (1\wedge | z|^2)\,\beta^{\prime\prime}(a+\theta\,b)
= a^2  \,\beta^{\prime\prime}(a+\theta\,b)\, (1\wedge | z|^2), \label{eq:nonlocal-estim}
\end{align}
 and hence we need to find a suitable upper bound on $a^2\,\beta^{\prime\prime}(a+\theta\,b)$. Since $\beta^{\prime\prime}$ is nonnegative and symmetric around zero, 
 we can assume without loss of generality that $a \ge 0$. Thus, thanks to assumption  \ref{A5}, we obtain 
 \begin{align}
0 \le a \le (1-\lambda^*)^{-1}(a+ \theta b),\,\,\,\text{for all}\,\,\,\theta\in [0,1]. \label{eq:nonlocal-estim-1}
\end{align}
We substitute $\beta= \beta_{\xi}$ in 
\eqref{eq:nonlocal-estim}, and 
use \eqref{eq:nonlocal-estim-1} to obtain
\begin{align}
b^2 \beta''_\xi (a+\theta\, b) 
&  \le (1-\lambda^*)^{-2}(a+\theta\,b)^2 
\,\beta^{\prime\prime}_{\xi} (a+\theta b)
\,(|z|^2\wedge 1) \notag \\
& \le C\,\xi\,(|z|^2\wedge 1 ),\label{eq:endgame2}
\end{align}
as $\sup_{r\in \R}\, r^2 \beta^{\prime\prime}_\xi(r) 
\le C \xi$ by \eqref{eq:approx to abosx}. Therefore, by using \ref{A4}, we have from \eqref{eq:endgame}
\begin{align*}
 \mathcal{A}_6 & \le C \xi\, \E \Big[\int_{\Pi_T}\int_{\R} \int_{|z|>0} (|z|^2\wedge 1 ) \psi(t,y)\varrho_{\delta}(x-y)\,dx\,dy\,dt\Big] \le C\,\xi,
\end{align*}
and hence  $\lim_{\xi}\mathcal{A}_6 =0$. 
Note also that
\begin{align*}
0 \leq \beta^{\prime\prime}\big(u_{\Delta y}(s,y)-u(s,x)\big) \big(\sigma(u_{\Delta y}(s,y)-\sigma(u(s,x))\big)^2 \leq M_2 \xi 
\end{align*}
so that, passing to the limit when $\xi \to 0^+$ in  \eqref{esti:0} with Lebesgue Theorem yields
\begin{align}
0 \le & \E\Big[\int_{\R}\int_{\R} \big|u_0(x)-u_{\Delta y}(0,y)\big| \varrho_{\delta}(x-y)\psi(0,y)\,dy\,dx\Big] \notag \\
& +  \E\Big[\int_{\Pi_T}\int_{\R}  \big|u_{\Delta y}(s,y)-u(s,x)\big| \partial_s \psi(s,y)
\varrho_{\delta}(x-y)\,dx\,dy\,ds \Big] \notag \\
& \quad + \E\Big[\int_{\Pi_T}\int_{\R} \mathbf{f}\big(u(s,x),u_{\Delta y}(s,y)\big)
\varrho_{\delta}(x-y)\partial_y \psi(s,y)\,dx\,dy\,ds\Big]  + C \frac{\Delta y}{\delta}. \label{esti:0BIS}
\end{align}

Our aim is to estimate each of the above terms suitably. 
To do this, we follow the ideas from \cite{BisKoleyMaj,Chen:2012fk}. At this point we first let $R \mapsto \infty$ in 
\eqref{esti:0BIS}. Moreover, 
since $|\mathbf{f}^\beta(a,b)|\le \|f^\prime\|_{L^\infty}|a-b|$, for any $a,b\in \R$ 
and $|\partial_y \psi(s,y)|\le C \psi(s,y)$, we conclude that 
\begin{align}
\E\Big[\int_{\Pi_T}\int_{\R} & \mathbf{f}\big(u(s,x),u_{\Delta y}(s,y)\big)\varrho_{\delta}(x-y)\partial_y \psi(s,y)\,dx\,dy\,ds\Big] \notag
\\ \leq& 
C\|f^\prime\|_{L^\infty} \E \Big[\int_{\Pi_T}\int_{\R} \big|u_{\Delta y}(s,y)-u(s,x)\big| \psi(s,y)\varrho_{\delta}(x-y)\,dx\,dy\,ds\Big].\label{esti:a4}
\end{align}

 To proceed further, we make a special choice for the function $\psi(s,y)$. 
 To this end, for each $h>0$ and fixed $t\ge 0$, we define
 \begin{align}
 \psi_h^t(s)=\begin{cases} 1, &\quad \text{if}~ s\le t, \notag \\
 1-\frac{s-t}{h}, &\quad \text{if}~~t\le s\le t+h,\notag \\
 0, & \quad \text{if} ~ s \ge t+h.
 \end{cases}
 \end{align}
Furthermore, let $\phi \in C^2(\R)$ be a cut-off function such that $|\phi'(x)| \le C \phi(x),~~ | \phi^{\prime\prime}(x)|\le C \phi(x)$.
 Clearly, $\psi(s,y)=\psi_h^t(s)\phi(y)$ is an admissible test-function and 
\begin{align}
 & \frac{1}{h}\int_{s=t}^{t+h}  \E \Big[\int_{\R^2} \big| u_{\Delta y}(s,y) -u(s,x)\big|\phi(y)\varrho_\delta(x-y)\,dx\,dy \Big]\,ds \notag \\
 &\qquad \le \E\Big[\int_{\R}\int_{\R} \big|u_0(x)-u_{\Delta y}(0,y)\big| \varrho_{\delta}(x-y)\phi(y)\,dy\,dx\Big] \notag \\
& \quad + C\, \E\Big[\int_0^{t+h}\int_{\R^2} \big|u(s,x)-u_{\Delta y}(s,y)\big| \phi(y) \varrho_{\delta}(x-y) \psi_h^t(s)\,dx\,dy\,ds\Big]  + C \frac{\Delta y}{\delta}.
\end{align}
Let $\mathbb{T}$ be the set all points $t$ in $[0, \infty)$ such that $t$ is  right Lebesgue point of 
 $$B(s)= \E \Big[\int_{\R^2} \big|u_{\Delta y}(s,y)-u(s,x)\big|\phi(y) \varrho_\delta(x-y)\,dx\,dy\Big].$$
 Clearly, $\mathbb{T}^{\complement}$ has zero Lebesgue measure. Fix  $t\in \mathbb{T}$. Then, passing to the limit as $h\goto 0$ yields
\begin{align*}
 & \E \Big[\int_{\R^2} \big| u_{\Delta y}(t,y) -u(t,x)\big|\phi(y)\varrho_\delta(x-y)\,dx\,dy \,ds\Big] \notag \\
&\qquad \qquad  \le \E\Big[\int_{\R}\int_{\R} \big|u_0(x)-u_{\Delta y}(0,y)\big| \varrho_{\delta}(x-y)\phi(y)\,dy\,dx\Big] \notag \\
& \quad \qquad \qquad + C\, \E\Big[\int_0^{t}\int_{\R^2} \big|u(s,x)-u_{\Delta y}(s,y)\big| \phi(y) \varrho_{\delta}(x-y) \,dx\,dy\,ds\Big]  + C\frac{\Delta y}{\delta}.
\end{align*}
 An application of Gronwall's lemma gives, for a.e. $t>0$
 \begin{align}
  \E\Big[\int_{\R^2} \big| u_{\Delta y}(t,y) & - u(t,x)\big|\phi(y)\varrho_\delta(x-y)\,dx\,dy \Big] \notag \\
  & \le   
 \Bigg( \E\Big[\int_{\R}\int_{\R} \big|u_0(x)-u_{\Delta y}(0,y)\big| \varrho_{\delta}(x-y)\phi(y)\,dy\,dx\Big] + C \frac{\Delta y}{\delta}\Bigg)e^{Ct}
.\label{esti:2}
 \end{align}
 Sending $\phi$ to $\mathds{1}_{\R}$ in \eqref{esti:2} along with the fact
 \begin{align*}
 \E\Big[\int_{\R} \big|u_{\Delta y}(t,y)-u(t,y)\big|\,dy\Big] \le  \E\Big[\int_{\R^2} \big| u_{\Delta y}(t,y) -u(t,x)\big|\varrho_\delta(x-y)\,dx\,dy \Big] + \delta \E\big[|u_0|_{BV(\R)}\big],\\
  \E\Big[\int_{\R^2} \big| u_{\Delta y}(0,y) -u_0(x)\big|\varrho_\delta(x-y)\,dx\,dy \Big] \le  \E\Big[\int_{\R} \big|u_{\Delta y}(0,y)-u_0(y)\big|\,dy\Big] +  \delta \E\big[|u_0|_{BV(\R)}\big],
 \end{align*}
 and the initial error bound \eqref{error:initial-data},  we obtain 
 \begin{align}
\E\Big[\int_{\R} \big|u_{\Delta y}(t,y)-u(t,y)\big|\,dy\Big] 
    \le C_T \big(\Delta y + \delta + C \frac{\Delta y}{\delta} \Big),\label{esti:3} 
 \end{align}
where $C_T$ is a positive constant independent of the small parameters. Minimizing the above inequality \eqref{esti:3} with respect to $\delta$ yields the estimate 
 \begin{align*}
 \E\Big[\int_{\R} \big|u_{\Delta y}(t,y)-u(t,y)\big|\,dy\Big]  \le C\sqrt{\Delta y}.
 \end{align*}
 This finishes the proof of the main theorem.

\section{Appendix}
\label{sec:app}
Here we study the semi-discrete finite difference scheme \eqref{eq:levy_stochconservation_ discrete_laws_1}  approximating \eqref{eq:stoc_con_laws} and show the convergence of approximate solutions. As we have mentioned earlier, Rohde et al. \cite{kroker} have studied semi-discrete finite volume scheme for the underlying problem \eqref{eq:stoc_con_laws} with $\eta=0$ and invoked stochastic 
compensated compactness method to show the convergence of approximate solutions to the unique entropy solution. Here, we consider the problem \eqref{eq:stoc_con_laws} and study convergence of approximate finite difference solutions via Young measure technique. 
\subsection{Proof of Theorem~\ref{thm:con}}
To begin with, let $ 0\le \phi(t,x) \in C_c^\infty([0,T)\times \R)$ and $(\beta,\zeta)$ be a given convex entropy-entropy 
flux pair with $\beta,\,\beta^\prime,\,\beta^{\prime \prime}$ and  $ \beta^{\prime \prime \prime}$ having at most polynomial growth. Let us also denote $\phi_j(t)= \phi(t,x_j)$.
In view of Lemma \eqref{lem:cellentropyinequality_finitevolumescheme}, a simple application of It\^{o}-L\'{e}vy product rule applied to $\beta(u_j(t))\phi_j(t)$ gives
\begin{align}
\label{eq:1}
 0\le & \beta(u_j(0))\phi_j(0) + \int_0^T \partial_t \phi_j(t)\beta(u_j(t))\,dt - \frac{1}{\Delta x} \int_0^T \phi_j(t)
 \Big(\zeta(u_{j+1}(t)) -\zeta(u_{j}(t))\Big)\,dt  \notag \\
 & \quad +  \int_0^T \sigma(u_j(t))\beta^{\prime}(u_j(t))\phi_j(t)\,dW(t) + \frac{1}{2}  \int_0^T \sigma^2(u_j(t))\beta^{\prime\prime}(u_j(t))\phi_j(t)\,dt \notag \\
 & \quad + \int_0^T \int_{|z|>0} \Big\{ \beta \big(u_j(t) + \eta(u_j(t);z)\big)-\beta(u_j(t))\Big\} \, \phi_j(t)\, \widetilde{N}(dz,dt) \notag \\
 & \qquad + \int_0^T \int_{|z|>0} \int_{0}^1 (1-\lambda)  \, \eta^2(u_j(t);z) \, \beta^{\prime\prime}\big(u_j(t) +
 \lambda\, \eta(u_j(t);z)\big) \, \phi_j(t)\,d\lambda\,m(dz)\,dt.
\end{align} 
Next, multiplying \eqref{eq:1} by $\Delta x$ and summing the resulting inequality over all $j\in \mathbb{Z}$, we obtain
\begin{align}
 0\le & {\Delta x} \sum_{j\in \mathbb{Z}}\beta(u_j(0))\phi_j(0) + {\Delta x} \sum_{j\in \mathbb{Z}}\int_0^T \partial_t \phi_j(t)\beta(u_j(t)) \,dt
 -{\Delta x} \sum_{j\in \mathbb{Z}}  \int_0^T \phi_j(t)
\frac{\zeta(u_{j+1}(t)) -\zeta(u_{j}(t))}{\Delta x} \,dt  \notag  \\ 
& \quad + {\Delta x} \sum_{j\in \mathbb{Z}} \int_0^T \sigma(u_j(t))\beta^{\prime}(u_j(t))\phi_j(t)\,dW(t) +  
{\Delta x} \sum_{j\in \mathbb{Z}}\frac{1}{2}  \int_0^T \sigma^2(u_j(t))\beta^{\prime\prime}(u_j(t))\phi_j(t)\,dt \notag \\
 & \qquad \quad  + {\Delta x} \sum_{j\in \mathbb{Z}}\int_0^T \int_{|z|>0} \Big\{ \beta \big(u_j(t) + \eta(u_j(t);z)\big)-\beta(u_j(t))\Big\} \, \phi_j(t)\, \widetilde{N}(dz,dt)  \label{eq:2} \\
 & + {\Delta x} \sum_{j\in \mathbb{Z}}\int_0^T \int_{|z|>0} \int_{0}^1 (1-\lambda) \, \eta^2(u_j(t);z)\,\beta^{\prime\prime}\big(u_j(t) + \lambda\, \eta(u_j(t);z)\big)
 \, \phi_j(t)\,d\lambda\,m(dz)\,dt. \notag
\end{align}
Let $B$ be an arbitrary set from $\mathcal{F}_T$. Taking expectation in \eqref{eq:2} gives us
\begin{align*}
& 0\le  \E\Big[\mathds{1}_{B}~{\Delta x} \sum_{j\in \mathbb{Z}}\beta(u_j(0))\phi_j(0)\Big]
 + \E\Big[ \mathds{1}_{B}~{\Delta x} \sum_{j\in \mathbb{Z}}\int_0^T \partial_t \phi_j(t)\beta(u_j(t)) \,dt\Big] \notag \\
 & \hspace{7cm} - \E\Big[\mathds{1}_{B}~{\Delta x} \sum_{j\in \mathbb{Z}}  \int_0^T \phi_j(t)
\frac{\zeta(u_{j+1}(t))-\zeta(u_{j}(t))}{\Delta x} \,dt\Big]  \notag \\
 & + \E\Big[\mathds{1}_{B}~{\Delta x} \sum_{j\in \mathbb{Z}}\int_0^T \int_{|z|>0} \Big\{ \beta \big(u_j(t) + \eta(u_j(t);z)\big)-\beta(u_j(t))\Big\} \, \phi_j(t)\,\widetilde{N}(dz,dt)\Big] \notag \\
 & + \E\Big[\mathds{1}_{B}~{\Delta x} \sum_{j\in \mathbb{Z}}\int_0^T \int_{|z|>0} \int_{0}^1 (1-\lambda) \,
 \eta^2(u_j(t);z) \, \beta^{\prime\prime} \big(u_j(t) + \lambda\, \eta(u_j(t);z)\big) \, \phi_j(t)\,d\lambda\,m(dz)\,dt \Big] \\
 & + \E\Big[\mathds{1}_{B}~{\Delta x} \sum_{j\in \mathbb{Z}}\int_0^T  \sigma(u_j(t))\beta^{\prime}(u_j(t))\phi_j(t)\,dW(t) \Big] + \frac{1}{2} \E\Big[\mathds{1}_{B}~{\Delta x} \sum_{j\in \mathbb{Z}}\int_0^T  \sigma^2(u_j(t))\beta^{\prime\prime}(u_j(t))\phi_j(t)\,dt \Big] \notag \\
 & := \E\Big[\mathds{1}_{B}~{\Delta x} \sum_{j\in \mathbb{Z}}\beta(u_j(0))\phi_j(0)\Big]
 + \E\Big[ \mathds{1}_{B}~{\Delta x} \sum_{j\in \mathbb{Z}}\int_0^T \partial_t \phi_j(t)\beta(u_j(t)) \,dt\Big] \\
 & \hspace{7cm}+ \mathcal{E}^1_{\Delta x} + \mathcal{E}^2_{\Delta x}+ \mathcal{E}^3_{\Delta x} + \mathcal{E}^4_{\Delta x} + \mathcal{E}^5_{\Delta x}. 
\end{align*}
We estimate each of the above terms. To begin with, we first estimate the first term. Since $\beta$ is convex, in view of Jensen's inequality one has
\begin{align}
  {\Delta x} \sum_{j\in \mathbb{Z}} & \beta(u_j(0))\phi_j(0)  = {\Delta x} \sum_{j\in \mathbb{Z}}\beta\Big( \frac{1}{\Delta x} \int_{x_{j-\frac{1}{2}}}^{x_{j+\frac{1}{2}}} u_0(x)\,dx\Big)\phi(0,x_j) \notag \\
 & \le \sum_{j\in \mathbb{Z}} \int_{x_{j-\frac{1}{2}}}^{x_{j+\frac{1}{2}}}\beta( u_0(x))\,\phi(0,x_j)\,dx \notag \\
 & = \sum_{j\in \mathbb{Z}} \int_{x_{j-\frac{1}{2}}}^{x_{j+\frac{1}{2}}}\beta( u_0(x))\,\phi(0,x)\,dx + 
 \sum_{j\in \mathbb{Z}} \int_{x_{j-\frac{1}{2}}}^{x_{j+\frac{1}{2}}}\beta( u_0(x))\,\Big(\phi(0,x_j)-\phi(0,x)\Big)\,dx  \notag \\
 & \le \int_{\R}\beta( u_0(x))\,\phi(0,x)\,dx + 
  {\Delta x} ||\phi_x(0,\cdot)||_{\infty} \int_{K}\beta( u_0(x))\,dx, \notag
\end{align} 
where $K$ is the compact support of the test function $\phi$ in $\R$. Since $B \in \mathcal{F}_T$ and $\beta$ has at most polynomial growth, we obtain
\begin{align}
 \E\Big[\mathds{1}_{B}~{\Delta x} \sum_{j\in \mathbb{Z}}\beta(u_j(0))\phi_j(0)\Big] \le 
 \E\Big[\mathds{1}_{B} \int_{\R}\beta(u_0(x))\phi(0,x)\,dx\Big] + \mathcal{O}({\Delta x}). \label{estimate:ini-discreate}
\end{align}
Next, we move on to estimate the second term. Note that $\phi(t,x) \in C_c^\infty([0,\infty)\times \R)$. Therefore
\begin{align}
& \E\Big[ \mathds{1}_{B} {\Delta x} \sum_{j\in \mathbb{Z}}\int_0^T \partial_t \phi_j(t)\beta(u_j(t)) \,dt\Big] =  \E\Big[ \mathds{1}_{B}  \int_0^T \sum_{j\in \mathbb{Z}}\int_{x_{j-\frac{1}{2}}}^{x_{j+\frac{1}{2}}} 
 \partial_t \phi(t,x_j)\beta(u_j(t))\,dx \,dt\Big] \notag \\
 & =  \E\Big[ \mathds{1}_{B}  \int_0^T \sum_{j\in \mathbb{Z}}\int_{x_{j-\frac{1}{2}}}^{x_{j+\frac{1}{2}}} 
 \partial_t \phi(t,x_j)\beta(u_{\Delta x}(t,x))\,dx \,dt\Big] \notag \\
  & =  \E\Big[ \mathds{1}_{B}  \Big\{\int_0^T \sum_{j\in \mathbb{Z}}\int_{x_{j-\frac{1}{2}}}^{x_{j+\frac{1}{2}}} 
 \partial_t \phi(t,x)\beta(u_{\Delta x}(t,x))\,dx \,dt  \notag \\
   & \hspace{5cm} +   \int_0^T \sum_{j\in \mathbb{Z}}\int_{x_{j-\frac{1}{2}}}^{x_{j+\frac{1}{2}}} 
 \Big(\partial_t \phi(t,x_j)-\partial_t\phi(t,x)\Big)\beta(u_{\Delta x}(t,x))\,dx \,dt\Big\}\Big] \notag \\
 & \quad \le    \E\Big[ \mathds{1}_{B} \int_0^T \int_{\R}\partial_t \phi(t,x)\beta(u_{\Delta x}(t,x))\,dx \,dt \Big]
    + {\Delta x}\, ||\phi_{tx}(t,\cdot)||_{\infty}  \E\Big[  \int_0^T \int_{K}\beta(u_{\Delta x}(t,x))\,dx \,dt\Big], \notag 
\end{align}  
where $K$ is the compact support of the test function $\phi$ in $\R$. Now we can  invoke  uniform moment estimate \eqref{uni:moment} and
the polynomial growth condition of $\beta$ to conclude
\begin{align}
  \E\Big[ \mathds{1}_{B}~{\Delta x} \sum_{j\in \mathbb{Z}}\int_0^T \partial_t \phi_j(t)\beta(u_j(t)) \,dt\Big]  \le 
   \E\Big[ \mathds{1}_{B} \int_0^T \int_{\R}\partial_t \phi(t,x)\beta(u_{\Delta x}(t,x))\,dx \,dt \Big] + \mathcal{O}({\Delta x}).
 \label{estimate:time derivative-discreate}
\end{align}
Let us focus on the third term $\mathcal{E}^1_{\Delta x}$. Notice that
\begin{align}
 &-{\Delta x} \sum_{j\in \mathbb{Z}}  \int_0^T \phi_j(t) \frac{ \zeta(u_{j+1} (t))-\zeta(u_{j}(t))}{\Delta x} \,dt
 = {\Delta x} \sum_{j\in \mathbb{Z}}  \int_0^T \frac{\phi_{j}(t) -\phi_{j-1} (t)}{\Delta x} \zeta(u_{j}(t))\,dt.   \label{expansion:entropy monotone flux}
\end{align} 
Since $\phi \in C_c^\infty([0,\infty)\times \R)$ and $\beta$ has a polynomial growth condition,
one can argue along the same line of previous argument and conclude
\begin{align}
\E\Big[\mathds{1}_{B} {\Delta x} \sum_{j\in \mathbb{Z}}  \int_0^T \frac{\phi_{j}(t) -\phi_{j-1}(t)}{\Delta x} \zeta(u_j(t)) \,dt\Big]
\le \E\Big[\mathds{1}_{B}   \int_0^T  \int_{\R}\partial_x \phi(t,x) \zeta(u_{\Delta x}(t,x)) \,dx\,dt\Big] + \mathcal{O}({\Delta x}).
\label{estimate:entropy monotone flux}
\end{align}
Hence, we conclude that 
\begin{align}
   - \E\Big[\mathds{1}_{B}~{\Delta x} \sum_{j\in \mathbb{Z}} & \int_0^T  \phi_j(t)
\frac{\zeta(u_{j+1} (t))-\zeta(u_{j}(t))}{\Delta x} \,dt\Big] \notag \\
&\qquad \qquad  \le 
 \E\Big[\mathds{1}_{B}\int_0^T  \int_{\R}\partial_x \phi(t,x) \zeta(u_{\Delta x}(t,x)) \,dx\,dt\Big] + \mathcal{O}({\Delta x}). 
 \label{estimate:spatial derivative-discreate}
\end{align}
Next, we consider the stochastic term $\mathcal{E}^2_{\Delta x}$. Let $B \in \mathcal{F}_T$ be any measurable set, then
\begin{align}
 & \E\Big[\mathds{1}_{B}~{\Delta x} \sum_{j\in \mathbb{Z}}\int_0^T \int_{|z|>0} \Big\{ \beta \big(u_j(t) + \eta(u_j(t);z)\big)-\beta(u_j(t))\Big\}
 \phi_j(t) \widetilde{N}(dz,dt)\Big] \notag \\
 = & \E\Big[\mathds{1}_{B} \sum_{j\in \mathbb{Z}} \int_{x_{j-\frac{1}{2}}}^{x_{j+\frac{1}{2}}} \int_0^T \int_{|z|>0} \Big\{ \beta \big(u_j(t) + \eta(u_j(t);z)\big)-\beta(u_j(t))\Big\}
 \phi_j(t)\, \widetilde{N}(dz,dt)\,dx\Big] \notag \\
  =& \E\Big[\mathds{1}_{B} \sum_{j\in \mathbb{Z}} \int_{x_{j-\frac{1}{2}}}^{x_{j+\frac{1}{2}}} \int_0^T \int_{|z|>0} \Big\{ \beta \big(u_{\Delta x}(t,x) + \eta(u_{\Delta x}(t,x);z)\big)-\beta(u_{\Delta x}(t,x))\Big\}
 \phi_j(t)\, \widetilde{N}(dz,dt)\,dx\Big] \notag \\
  = & \E\Big[\mathds{1}_{B}  \int_0^T \int_{|z|>0} \int_{\R} \Big\{ \beta \big(u_{\Delta x}(t,x) + \eta(u_{\Delta x}(t,x);z)\big)-\beta(u_{\Delta x}(t,x))\Big\}
 \phi(t,x) \,dx\, \widetilde{N}(dz,dt)\Big] \notag \\
  &\quad +  \E\Big[\mathds{1}_{B} \sum_{j\in \mathbb{Z}} \int_{x_{j-\frac{1}{2}}}^{x_{j+\frac{1}{2}}} \int_0^T \int_{|z|>0}
 \Big\{ \beta \big(u_{\Delta x}(t,x) + \eta(u_{\Delta x}(t,x);z)\big)-\beta(u_{\Delta x}(t,x))\Big\} \notag \\
 &\hspace{9cm} \times \big( \phi(t,x_j)-\phi(t,x) \big)\,\widetilde{N}(dz,dt) \,dx\Big] \notag \\
    &:= \E\Big[\mathds{1}_{B}  \int_0^T \int_{|z|>0} \int_{\R} \Big\{ \beta \big(u_{\Delta x}(t,x) + \eta(u_{\Delta x}(t,x);z)\big)-\beta(u_{\Delta x}(t,x))\Big\}
 \phi(t,x) \,dx\, \widetilde{N}(dz,dt)\Big] \notag \\
 & \hspace{ 4cm} + \mathcal{E}^{2,1}_{\Delta x}.\label{expansion: stochastic term}
\end{align} 
To proceed further, we consider the term $\mathcal{E}^{2,1}_{\Delta x}$. In fact, in view of assumption \ref{A4} and \ref{A5}, Cauchy-Schwartz inequality, BDG inequality,
Jensen's inequality, and  polynomial growth condition on  $\beta^\prime$ along with uniform moment estimate \eqref{uni:moment}, 
we have
\begin{align}
\label{estimate:A_1}
 \mathcal{E}^{2,1}_{\Delta x}  \le &  \Bigg( \E\Big[\Big( \sum_{j\in \mathbb{Z}} \int_{x_{j-\frac{1}{2}}}^{x_{j+\frac{1}{2}}} \int_0^T \int_{|z|>0}
 \Big\{ \beta \big(u_{\Delta x}(t,x) + \eta(u_{\Delta x}(t,x);z)\big)-\beta(u_{\Delta x}(t,x))\Big\}  \\
 &\hspace{7cm} \times \big( \phi(t,x_j)-\phi(t,x) \big)\,\widetilde{N}(dz,dt) \,dx \Big)^2\Big]\Bigg)^\frac{1}{2} \notag \\
  \le & \Bigg( \E\Big[\int_0^T \int_{|z|>0} \Big\{ \sum_{j\in \mathbb{Z}} \int_{x_{j-\frac{1}{2}}}^{x_{j+\frac{1}{2}}}
 \int_{0}^1 \eta(u_{\Delta x}(t,x);z) \beta^\prime \big(u_{\Delta x}(t,x)+\lambda\,\eta(u_{\Delta x}(t,x);z)\big)  \notag \\
 & \hspace{7cm} \times \big( \phi(t,x_j)-\phi(t,x) \big)\,d\lambda
 \,dx \Big\}^2 \,m(dz)\,dt \Big]\Bigg)^\frac{1}{2} \notag \\
  \le & C {\Delta x} || \phi_x(t,\cdot)||_{\infty} \Bigg( \E\Big[\int_0^T \int_{|z|>0}\int_{K}
 \int_{0}^1 {\beta^\prime}^2 \big(u_{\Delta x}(t,x)+\lambda\,\eta(u_{\Delta x}(t,x);z)\big)  \notag \\
 & \hspace{8cm} \times  \eta^2(u_{\Delta x}(t,x);z) \,d\lambda\,dx  \,m(dz)\,dt \Big]\Bigg)^\frac{1}{2} \notag \\
   \le &  C {\Delta x} || \phi_x(t,\cdot)||_{\infty} \Bigg( \E\Big[\int_0^T \int_{|z|>0}  \int_{K}\big(1+ |u_{\Delta x}(t,x)|^2 \big)
  \big( |u_{\Delta x}(t,x)|^{2p} + |\eta(u_{\Delta x}(t,x);z)|^{2p}\big)    \notag \\
   & \hspace{4cm} \times (1 \wedge |z|^2) \,dx  \,m(dz)\,dt \Big]\Bigg)^\frac{1}{2} \notag \\
   \le & C {\Delta x} || \phi_x(t,\cdot)||_{\infty} \Bigg( \E\Big[\int_0^T \int_{|z|>0}  \int_{K} \Big( 1 + |u_{\Delta x}(t,x)|^{2p} + 
  |u_{\Delta x}(t,x)|^{2p+2} + |u_{\Delta x}(t,x)|^{2} \Big)  \notag \\
  &  \hspace{5cm}\times(1 \wedge |z|^2) \,dx  \,m(dz)\,dt \Big]\Bigg)^\frac{1}{2} \notag \\
  \le & C_1 {\Delta x}. \notag 
\end{align} 
Now we combine  \eqref{expansion: stochastic term} and \eqref{estimate:A_1} and conclude that for any measurable set
 $ B\in \mathcal{F}_T,$
\begin{align}
   \E\Big[&\mathds{1}_{B}~{\Delta x} \sum_{j\in \mathbb{Z}}\int_0^T \int_{|z|>0} \Big\{ \beta \big(u_j(t) + \eta(u_j(t);z)\big)-\beta(u_j(t))\Big\}
 \phi_j(t)\, \widetilde{N}(dz,dt)\Big] \notag \\
 \le &  \E\Big[\mathds{1}_{B}\int_0^T \int_{|z|>0} \int_{\R} \Big\{ \beta \big(u_{\Delta x}(t,x) + \eta(u_{\Delta x}(t,x);z)\big)-\beta(u_{\Delta x}(t,x))\Big\} 
  \phi(t,x) \,dx\, \widetilde{N}(dz,dt)\Big] + \mathcal{O}({\Delta x}).
 \label{estimate: stochastic term-discreate}
\end{align}
Let us consider the term $\mathcal{E}^3_{\Delta x}$. Notice that
\begin{align}
  &\E\Big[\mathds{1}_{B}~{\Delta x} \sum_{j\in \mathbb{Z}}\int_0^T \int_{|z|>0} \int_{0}^1 (1-\lambda) 
 \eta^2(u_j(t);z)\beta^{\prime\prime} \big(u_j(t) + \lambda\, \eta(u_j(t);z)\big) \phi_j(t)\,d\lambda\,m(dz)\,dt \Big] \notag \\
 =&  \E\Big[\mathds{1}_{B} \sum_{j\in \mathbb{Z}}\int_{x_{j-\frac{1}{2}}}^{x_{j+\frac{1}{2}}} \int_0^T \int_{|z|>0} \int_{0}^1 (1-\lambda) 
 \eta^2(u_j(t);z)\beta^{\prime\prime} \big(u_j(t) + \lambda\, \eta(u_j(t);z)\big)
 \phi(t,x_j)\,d\lambda\,m(dz)\,dt\,dx \Big] \notag \\
 = &  \E\Big[\mathds{1}_{B} \sum_{j\in \mathbb{Z}}\int_{x_{j-\frac{1}{2}}}^{x_{j+\frac{1}{2}}} \int_0^T \int_{|z|>0} \int_{0}^1 (1-\lambda) 
 \eta^2(u_{\Delta x}(t,x);z)\beta^{\prime\prime} \big(u_{\Delta x}(t,x) + \lambda\, \eta(u_{\Delta x}(t,x);z)\big) \notag \\
 & \hspace{8cm} \times 
 \phi(t,x_j)\,d\lambda\,m(dz)\,dt\,dx \Big] \notag \\
 =&  \E\Big[\mathds{1}_{B}  \int_0^T \int_{|z|>0} \int_{\R} \int_{0}^1 (1-\lambda) 
 \eta^2(u_{\Delta x}(t,x);z)\beta^{\prime\prime} \big(u_{\Delta x}(t,x) + \lambda\, \eta(u_{\Delta x}(t,x);z)\big) \notag \\
 & \hspace{8cm} \times 
 \phi(t,x)\,d\lambda\,m(dz)\,dt\,dx \Big] \notag \\
 & +  \E\Big[\mathds{1}_{B} \sum_{j\in \mathbb{Z}}\int_{x_{j-\frac{1}{2}}}^{x_{j+\frac{1}{2}}} \int_0^T \int_{|z|>0} \int_{0}^1 (1-\lambda) 
 \eta^2(u_{\Delta x}(t,x);z)\beta^{\prime\prime} \big(u_{\Delta x}(t,x) + \lambda\, \eta(u_{\Delta x}(t,x);z)\big)   \notag \\
& \hspace{5cm} \times \big( \phi(t,x_j)-\phi(t,x)\big)\,d\lambda\,m(dz)\,dt\,dx \Big] \notag \\
 & :=  \E\Big[\mathds{1}_{B}  \int_0^T \int_{|z|>0} \int_{\R} \int_{0}^1 (1-\lambda) 
 \eta^2(u_{\Delta x}(t,x);z)\beta^{\prime\prime} \big(u_{\Delta x}(t,x) + \lambda \eta(u_{\Delta x}(t,x);z)\big)  \notag \\
&  \hspace{8.5cm}  \times \phi(t,x)\,d\lambda\,dx\,m(dz)\,dt \Big] 
+\mathcal{E}^{3,1}_{\Delta x}.
\label{expansion:correction term}
\end{align}
We would like to estimate $\mathcal{E}^{3,1}_{\Delta x}$.
Thanks to the polynomial growth of $\beta^{\prime \prime}$ and the property of $\eta$, we have 
  \begin{align}
&\mathcal{E}^{3,1}_{\Delta x}  \le C \Bigg( \E\Bigg[ \Bigg( \sum_{j\in \mathbb{Z}}\int_{x_{j-\frac{1}{2}}}^{x_{j+\frac{1}{2}}} \int_0^T \int_{|z|>0}  
 \big(1+ |u_{\Delta x}(t,x)|^2 \big)\big(|u_{\Delta x}(t,x)|^p + (1+|u_{\Delta x}(t,x)|^p)\big) \notag \\
 & \hspace{6cm} \times (1\wedge |z|^2) |\phi_x(t,\xi)| |x-x_j|\,m(dz)\,dt\,dx \Bigg)^2\Bigg]\Bigg)^\frac{1}{2} \notag \\
 & \le C \Bigg( \E\Bigg[ \Bigg( \sum_{j\in \mathbb{Z}}\int_{x_{j-\frac{1}{2}}}^{x_{j+\frac{1}{2}}} \int_0^T  
  \Big(1+ |u_{\Delta x}(t,x)|^2 + |u_{\Delta x}(t,x)|^p + |u_{\Delta x}(t,x)|^{p+2}\Big) 
  |\phi_x(t,\xi)| |x-x_j|\,dt\,dx \Bigg)^2\Bigg]\Bigg)^\frac{1}{2}\notag \\
 & \le C\,{\Delta x}\, ||\phi_x||_{\infty}  \Bigg( \E\Bigg[ \int_{K} \int_0^T 
  \Big(1+ |u_{\Delta x}(t,x)|^{2p} +|u_{\Delta x}(t,x)|^{2(p+2)} +|u_{\Delta x}(t,x)|^4 \Big)\,dt\,dx \Bigg]\Bigg)^\frac{1}{2} \notag \\
  & \le  C_2 {\Delta x},  \label{estimate:B_1}
  \end{align}  
where $ \xi \in (x,x_j)$. In the above, we have used uniform moment estimate \eqref{uni:moment} along with the condition that $\phi$ has compact support.
We use the estimate \eqref{estimate:B_1} in \eqref{expansion:correction term} and  have
 \begin{align}
   &\E\Big[\mathds{1}_{B}~{\Delta x} \sum_{j\in \mathbb{Z}}\int_0^T \int_{|z|>0} \int_{0}^1 (1-\lambda) 
 \eta^2(u_j(t);z)\beta^{\prime\prime}\big(u_j(t) + \lambda\, \eta(u_j(t);z)\big) \phi_j(t)\,d\lambda\,m(dz)\,dt \Big] \notag \\
 \le &  \E\Big[\mathds{1}_{B}  \int_{\Pi_T}\int_{|z|>0}  \int_{0}^1 (1-\lambda) 
 \eta^2(u_{\Delta x}(t,x),z)\beta^{\prime\prime}\big(u_{\Delta x}(t,x) + \lambda\, \eta(u_{\Delta x}(t,x);z)\big)
  \phi(t,x)\,d\lambda\,m(dz)\,dt\,dx \Big] \notag \\
 & \hspace{5cm}+ \mathcal{O}({\Delta x}). 
 \label{estimate: correction term-discreate}
 \end{align}
 Again, observe that 
 \begin{align*}
  \mathcal{E}^{4}_{\Delta x} &= \E\Big[\mathds{1}_{B}\int_0^T  \int_{\R} \sigma(u_{\Delta x}(t,x)) \beta^{\prime}(u_{\Delta x}(t,x)) \phi(t,x)\,dx\,dW(t) \Big] \notag \\
  & +  \E\Big[\mathds{1}_{B}  \sum_{j\in \mathbb{Z}}\int_{x_{j-\frac{1}{2}}}^{x_{j+\frac{1}{2}}} \int_0^T  \sigma(u_{\Delta x}(t,x)) \beta^{\prime}(u_{\Delta x}(t,x))
  \big( \phi(t,x_j)-\phi(t,x)\big)\,dx\,dW(t)\Big] \notag \\
  & = \E\Big[\mathds{1}_{B}\int_0^T  \int_{\R} \sigma(u_{\Delta x}(t,x)) \beta^{\prime}(u_{\Delta x}(t,x)) \phi(t,x)\,dx\,dW(t) \Big] + \mathcal{E}^{4,1}_{\Delta x}.  
 \end{align*}
 One can argue similary as in the estimation of the term $\mathcal{E}^{2,1}_{\Delta x}$ and have $\mathcal{E}^{4,1}_{\Delta x}\le C \Delta x$. Thus, we conclude that 
 \begin{align}
  \mathcal{E}^{4}_{\Delta x} & \le  \E\Big[\mathds{1}_{B}\int_0^T  \int_{\R} \sigma(u_{\Delta x}(t,x)) \beta^{\prime}(u_{\Delta x}(t,x)) \phi(t,x)\,dx\,dW(t) \Big]
  + C\, \Delta x. \label{estimate:ito-brown-term-discreate}
 \end{align}
 Finally, we consider the expression  $\mathcal{E}^{5}_{\Delta x}$. We can re-wrire this term as 
 \begin{align}
  \mathcal{E}^{5}_{\Delta x} & = \frac{1}{2} \E\Big[\mathds{1}_{B}\int_0^T \int_{\R} \sigma^2(u_{\Delta x}(t,x)) \beta^{\prime\prime}(u_{\Delta x}(t,x)) \phi(t,x)\,dx\,dt \Big] \notag \\
  &  \qquad +  \frac{1}{2} \E\Big[\mathds{1}_{B}  \sum_{j\in \mathbb{Z}}\int_{x_{j-\frac{1}{2}}}^{x_{j+\frac{1}{2}}} \int_0^T  \sigma^2(u_{\Delta x}(t,x))
  \beta^{\prime\prime}(u_{\Delta x}(t,x))
  \big( \phi(t,x_j)-\phi(t,x)\big)\,dt\,dx\Big] \notag \\
  & := \frac{1}{2} \E\Big[\mathds{1}_{B}\int_0^T \int_{\R} \sigma^2(u_{\Delta x}(t,x)) \beta^{\prime\prime}(u_{\Delta x}(t,x)) \phi(t,x)\,dx\,dt \Big]
  + \mathcal{E}^{5,1}_{\Delta x}. \label{expression:e5}
 \end{align}
 In view of polynomial growth condition of $\beta^{\prime\prime}$, assumption \ref{A3} and uniform moment estimate \eqref{uni:moment}, we arrive at
 (similar to the term $ \mathcal{E}^{3,1}_{\Delta x}$)
\begin{align}
  \mathcal{E}^{5,1}_{\Delta x} \le C\,\Delta x. \label{esti:e51}
\end{align}
Thus, combining \eqref{expression:e5} and \eqref{esti:e51}, we obtain 
\begin{align}
  \mathcal{E}^{5}_{\Delta x} \le \frac{1}{2} \E\Big[\mathds{1}_{B}\int_0^T \int_{\R} \sigma^2(u_{\Delta x}(t,x)) \beta^{\prime\prime}(u_{\Delta x}(t,x)) \phi(t,x)\,dx\,dt \Big]
   + C\, \Delta x. \label{estimate: correction term-discreate-Brownian}
\end{align}
 Now we are in a position to combine the estimates \eqref{estimate:ini-discreate},\eqref{estimate:time derivative-discreate}, \eqref{estimate:spatial derivative-discreate},
 \eqref{estimate: stochastic term-discreate}, \eqref{estimate: correction term-discreate}, \eqref{estimate:ito-brown-term-discreate} and \eqref{estimate: correction term-discreate-Brownian}
 to conclude that
 \begin{align}
   0\le &  \E\Big[\mathds{1}_{B} \int_{\R}\beta(u_0(x))\phi(0,x)\,dx\Big] + 
    \E\Big[ \mathds{1}_{B} \int_0^T \int_{\R}\partial_t \phi(t,x)\beta(u_{\Delta x}(t,x))\,dx \,dt \Big]  \notag \\
     &+  \E\Big[\mathds{1}_{B}\int_0^T  \int_{\R}\partial_x \phi(t,x) \zeta(u_{\Delta x}(t,x)) \,dx\,dt\Big]  \notag \\
      & + \E\Big[\mathds{1}_{B}\int_0^T  \int_{\R} \sigma(u_{\Delta x}(t,x)) \beta^{\prime}(u_{\Delta x}(t,x)) \phi(t,x)\,dx\,dW(t) \Big]\notag \\
      & + \frac{1}{2} \E\Big[\mathds{1}_{B}\int_0^T \int_{\R} \sigma^2(u_{\Delta x}(t,x)) \beta^{\prime\prime}(u_{\Delta x}(t,x)) \phi(t,x)\,dx\,dt \Big]\notag \\
     & + \E\Big[\mathds{1}_{B}\int_0^T \int_{|z|>0} \int_{\R} \Big\{ \beta\big (u_{\Delta x}(t,x) + \eta(u_{\Delta x}(t,x);z)\big)-\beta(u_{\Delta x}(t,x))\Big\}
 \phi(t,x) \,dx \,\widetilde{N}(dz,dt)\Big] \notag \\
 & + \E\Big[\mathds{1}_{B}  \int_0^T \int_{|z|>0} \int_{\R} \int_{0}^1 (1-\lambda) 
 \eta^2(u_{\Delta x}(t,x);z)\beta^{\prime\prime} \big(u_{\Delta x}(t,x) + \lambda\, \eta(u_{\Delta x}(t,x);z)\big) \notag \\
 &\hspace{6cm} \times \phi(t,x)\,d\lambda\,dx\,m(dz)\,dt \Big] + \mathcal{O}({\Delta x}). \label{eq:young-discreate-approximation}
 \end{align}
Finally, we would like to pass to the limit as ${\Delta x} \goto 0$. To do so,  we use the 
  technique of Young measure theory, see \cite{Balder}.  
  Let the predictable $\sigma$-field of $\Omega\times(0,T)$ with the respect to $\{\mathcal{F}_t\}$ is
  denoted by $\mathcal{P}_T$, and we set 
\begin{align*}
  \Theta = \Omega\times (0,T)\times \R,\quad \Sigma = \mathcal{P}_T \times \mathcal{L}(\R)\quad \text{and} \quad \mu= P\otimes \lambda_t\otimes \lambda_x,
\end{align*}
where $\lambda_t$ and $\lambda_x$ are respectively the Lebesgue measures on $(0,T)$ and $\R$.
Moreover, for $M\in \mathbb{N}$, set
  $ \Theta_M = \Omega\times (0,T)\times B_M,$ where $B_M$ be the ball of radius $M$ around zero in $\R$. The set of all Young measures from $\Theta$ into $\R$ is denoted by $\mathcal{R}(\Theta, \Sigma, \mu).$
  Here we sum up the necessary results in the following lemma to carry over the subsequent analysis. For a proof of this lemma, consult \cite{BaVaWit, BisKarlMaj}.
\begin{lem} 
\label{prop:young-measure}
  Let  $\{u_{\Delta x}(t,x)\}_{{\Delta x}> 0}$ be a sequence of  $L^p(\R)$-valued predictable processes such that
  \eqref{uni:moment} holds.Then there exists a subsequence $\{{\Delta x}_n\}$ with ${\Delta x}_n\goto 0$ and a Young measure
  $\nu\in \mathcal{R}(\Theta, \Sigma, \mu) $ such that the following hold:
  \begin{itemize}
   \item [$(A)$] If  $h(\theta,\xi)$ is a Caratheodory function on $\Theta\times \R$ such that $\mbox{supp}(h)\subset \Theta_M\times \R$
 for some $M \in \mathbb{N}$ and $\{h(\theta, u_{{\Delta x}_n}(\theta)\}_n$ (where $\theta:= (\omega; t, x)$) is uniformly 
 integrable,  then 
 \begin{align}
    \lim_{{\Delta x}_n\rightarrow 0} \int_{\Theta}h(\theta, u_{{\Delta x}_n}(\theta))\,\mu(d\theta) =   \int_\Theta\Big[\int_{\R} h(\theta, \xi)\nu(\theta)(\,d\xi)\Big]\,\mu(d \theta). \label{yong-weak-limit}
    \end{align}
\item [$(B)$] Denoting a triplet $(\omega, t,x)\in \Theta$ by $\theta$, we define 
\begin{align}
  u(\theta,\alpha)=\inf\Big\{ c\in \R: \nu(\theta)\Big((-\infty,c)\Big)>\alpha \Big\},\,\,\,\, \text{for}\,\,\,\, \alpha \in (0,1)~\text{and}~\theta\in \Theta.\label{definition:young measure-solution}
\end{align} 
 Then, the function $  u(\theta,\alpha)$ is  non-decreasing, right continuous on $(0,1)$ and  $ \mathcal{P}_T\times \mathcal{L}(\R\times (0,1))$- measurable.
  Moreover, if $h(\theta,\xi)$ is a nonnegative Caratheodory function on $\Theta\times \R$, then 
  \begin{align}
 \int_{\Theta}\Big[\int_\R h(\theta,\xi)\nu(\theta)(\,d\xi)\Big]\, \mu \,(d\theta)= \int_\Theta\int_{\alpha=0}^1 h(\theta, u(\theta,\alpha))
 \,d\alpha\, \mu(d\theta). \label{yong-weak-limit-1}
 \end{align}
\end{itemize}
\end{lem}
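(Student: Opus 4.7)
The proof is a standard application of the fundamental theorem of Young measures (in the tradition of Ball, Balder and Valadier), adapted to the predictable setting, and the plan is to reproduce in two steps the argument of \cite{BaVaWit, BisKarlMaj}.

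For part $(A)$, I would associate to each approximation the Dirac Young measure $\nu_{\Delta x}(\theta) := \delta_{u_{\Delta x}(\theta)}$, viewed as a $\Sigma$-measurable probability kernel from $\Theta$ to $\R$. The uniform moment estimate \eqref{uni:moment} gives $\sup_{\Delta x}\int_{\Theta}\!\int_{\R}|\xi|^{p}\,\nu_{\Delta x}(\theta)(d\xi)\,\mu(d\theta)<\infty$, so Chebyshev's inequality yields tightness: for any $\eps>0$ there exists $M$ with $\mu(\{\theta:|u_{\Delta x}(\theta)|\ge M\})\le\eps$ uniformly in $\Delta x$. A diagonal extraction along a countable, dense family $\{\varphi_k\}$ of bounded Lipschitz functions produces a subsequence $\{\Delta x_n\}$ along which $\varphi_k(u_{\Delta x_n}(\theta))$ converges weakly in $L^1(\Theta,\mu)$ to some $\Psi_k(\theta)$; representing $\Psi_k(\theta)=\int_{\R}\varphi_k(\xi)\,\nu(\theta)(d\xi)$ via a Radon--Nikodym / Kolmogorov extension argument produces the limit Young measure $\nu$. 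The convergence \eqref{yong-weak-limit} for a general Carath\'eodory $h$ with $\supp(h)\subset\Theta_M\times\R$ and $\{h(\theta,u_{\Delta x_n}(\theta))\}$ uniformly integrable then follows by approximating $h$ in measure by finite linear combinations of $\chi_{A}(\theta)\varphi_{k}(\xi)$ and applying the Vitali convergence theorem.

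For part $(B)$, I would define, for each fixed $\theta$, the distribution function $F(\theta,c):=\nu(\theta)\big((-\infty,c)\big)$, and then $u(\theta,\alpha)$ as its right-continuous generalized inverse given by \eqref{definition:young measure-solution}. Monotonicity and right-continuity of $u(\theta,\cdot)$ on $(0,1)$ are classical for quantile functions. Joint $\mathcal{P}_T\otimes\mathcal{L}(\R\times(0,1))$-measurability would be established from the identity
\begin{equation*}
\{(\theta,\alpha)\in\Theta\times(0,1):u(\theta,\alpha)\le c\}=\{(\theta,\alpha):F(\theta,c)\ge\alpha\}
\end{equation*}
taken over rational $c$, using that $\theta\mapsto F(\theta,c)$ is $\mathcal{P}_T$-measurable (being an $L^1$-weak limit of $\mathcal{P}_T$-measurable functions). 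The equality \eqref{yong-weak-limit-1} is then the standard change-of-variables: for $\alpha$ uniform on $(0,1)$, $u(\theta,\alpha)$ has law $\nu(\theta)$, so $\int_{\R}h(\theta,\xi)\,\nu(\theta)(d\xi)=\int_{0}^{1}h(\theta,u(\theta,\alpha))\,d\alpha$ for nonnegative Carath\'eodory $h$ by monotone convergence, and integrating against $\mu$ gives the claim via Fubini.

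The main obstacle I expect is the predictability bookkeeping in part $(A)$. The generic Young measure theorem delivers a kernel measurable with respect to the product $\sigma$-algebra $\mathcal{B}(\Omega\times(0,T))\otimes\mathcal{L}(\R)$, whereas here one must upgrade this to $\mathcal{P}_T\otimes\mathcal{L}(\R)$-measurability. The clean way to handle this is to observe that $\mathcal{P}_T$ is stable under weak $L^1$-limits: since each $u_{\Delta x_n}$ is $\mathcal{P}_T\otimes\mathcal{L}(\R)$-measurable, so is every $\varphi_k(u_{\Delta x_n})$, and the limit $\Psi_k$ inherits $\mathcal{P}_T\otimes\mathcal{L}(\R)$-measurability; one then assembles $\nu$ from these limits on a countable $\pi$-system generating the Borel $\sigma$-algebra of $\R$. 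All remaining steps are bookkeeping.
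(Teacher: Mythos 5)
Your proposal is correct and follows essentially the same route as the paper, which gives no proof of its own but defers to \cite{BaVaWit, BisKarlMaj}: Balder-style Young measure theory with tightness supplied by \eqref{uni:moment}, extraction of a limit kernel, the quantile (generalized inverse) representation for part $(B)$, and the predictability upgrade via weak closedness of $L^1(\Theta,\Sigma,\mu)$ inside the larger $L^1$ space. One small repair: since $F(\theta,c)=\nu(\theta)\big((-\infty,c)\big)$ is left-continuous in $c$ and \eqref{definition:young measure-solution} is the \emph{right}-continuous inverse, your sublevel identity should read $\{u(\theta,\alpha)\le c\}=\bigcap_{n}\{(\theta,\alpha):F(\theta,c+1/n)>\alpha\}$ rather than $\{F(\theta,c)\ge\alpha\}$ (take $\nu=\delta_0$, $c=0$, $\alpha=\tfrac12$ to see the latter fail), a one-line fix that leaves your measurability argument, and the rest of the proof, intact.
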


In view of the Lemma \ref{prop:young-measure}, one can conclude (see \cite[Corollary $4.6$]{BisKarlMaj}, and \cite[Lemma $4.7$]{BisMaj}) that
   for every $B\in \mathcal{F}_T$ 
\begin{align}
  &  \lim_{{\Delta x}_n\rightarrow 0}  \E\Big[\mathds{1}_{B} \int_0^T \int_{|z|>0}\int_{\R} \Big( \beta\big( u_{{\Delta x}_n}(t,x) + \eta( u_{{\Delta x}_n}(t,x); z)\big) -  \beta( u_{{\Delta x}_n}(t,x))\Big)
  \phi(t,x)\, dx\,\widetilde{N}(dz,dt) \Big] \notag\\
    = & \E\Big[\mathds{1}_{B} \int_0^T \int_{|z|>0}\int_{\R}\int_{0}^1 \Big(\beta \big( u(t,x,\alpha) + \eta(u(t,x,\alpha); z)\big)
    -  \beta( u(t,x,\alpha)\Big)\phi(t,x)\,d\alpha\, dx\,\widetilde{N}(dz,dt) \Big],
    \label{limit-stochastic term-approximation}
\end{align}
and 
\begin{align}
  &  \lim_{{\Delta x}_n\rightarrow 0}  \E\Big[\mathds{1}_{B} \int_0^T \int_{\R} \sigma(u_{{\Delta x}_n}(t,x)) \beta^{\prime}( u_{{\Delta x}_n}(t,x))
  \phi(t,x)\, dx\,dW(t)\Big] \notag\\
    &= \E\Big[\mathds{1}_{B} \int_0^T\int_{\R}\int_{0}^1 \sigma(u(t,x,\alpha)) \beta^\prime( u(t,x,\alpha))\,\phi(t,x)\,d\alpha\, dx\,dW(t) \Big].
    \label{limit-stochastic term-approximation-Brown}
\end{align}
Note that for any $B\in \mathcal{F}_T$, the
function ${\bf 1}_B \in L^2\big(0,T; L^2({(\Omega, \mathcal{F}_T)}, L^2(\R))\big)$, and  $ L^2(\Theta, \Sigma, \mu)$
is a closed subspace of  $L^2\big(0,T; L^2({(\Omega, \mathcal{F}_T)}, L^2(\R))\big)$. Hence the weak convergence
in $ L^2\big(\Theta, \Sigma, \mu\big)$ would imply weak convergence
in $L^2\big(0,T; L^2({(\Omega, \mathcal{F}_T)}, L^2(\R))\big)$. Therefore, in view of Lemma \ref{prop:young-measure},
 we have 
 \begin{align}
  & \lim_{{\Delta x}_n\rightarrow 0} \E\Big[ \mathds{1}_{B} \int_0^T \int_{\R} \Big(\partial_t \phi(t,x)\beta(u_{{\Delta x}_n}(t,x))+\partial_x \phi(t,x)
   \zeta(u_{{\Delta x}_n}(t,x)) \Big) \,dx\,dt\Big] \notag \\
   &\qquad \qquad = \E\Big[ \mathds{1}_{B} \int_0^T \int_{\R} \int_{0}^1 \Big(\partial_t \phi(t,x)\beta(u(t,x,\alpha))+\partial_x \phi(t,x)
   \zeta(u(t,x,\alpha)) \Big)\, d\alpha\,dx\,dt\Big], \label{limit-partial-approximation} \\
   & \lim_{{\Delta x}_n\rightarrow 0} \E\Big[\mathds{1}_{B}  \int_0^T \int_{|z|>0} \int_{\R} \int_{0}^1 (1-\lambda) 
 \eta^2(u_{{\Delta x}_n}(t,x);z)\beta^{\prime\prime} \big(u_{{\Delta x}_n}(t,x) + \lambda\,\eta(u_{{\Delta x}_n}(t,x);z)\big) \notag \\
 &\hspace{8cm} \times \phi(t,x)\,d\lambda\,dx\,m(dz)\,dt \Big] \notag \\
  & \qquad \qquad= \E\Big[\mathds{1}_{B}  \int_0^T \int_{|z|>0} \int_{\R} \int_{0}^1 \int_{0}^1 (1-\lambda) 
 \eta^2(u(t,x,\alpha);z)\beta^{\prime\prime} \big(u(t,x,\alpha) + \lambda\,\eta(u(t,x,\alpha);z)\big) \notag \\
 &\hspace{8cm} \times \phi(t,x)\,d\alpha\,d\lambda\,dx\,m(dz)\,dt \Big], \label{limit-correction term-approximation} \\
 & \text{and} \notag  \\
   & \lim_{{\Delta x}_n\rightarrow 0} \frac{1}{2} \E\Big[\mathds{1}_{B}  \int_0^T \int_{\R}
 \sigma^2(u_{{\Delta x}_n}(t,x))\beta^{\prime\prime}(u_{{\Delta x}_n}(t,x)) \phi(t,x)\,dx\,dt \Big] \notag \\
  & \qquad \qquad= \frac{1}{2} \E\Big[\mathds{1}_{B}  \int_0^T \int_{\R} \int_{0}^1
 \sigma^2(u(t,x,\alpha))\beta^{\prime\prime}(u(t,x,\alpha)) \phi(t,x)\,d\alpha\,dx\,dt \Big]. \label{limit-correction term-approximation-brown}
 \end{align}
 
 Now,  one can use \eqref{limit-partial-approximation}, \eqref{limit-correction term-approximation} and \eqref{limit-correction term-approximation-brown} along with  
\eqref{limit-stochastic term-approximation} and \eqref{limit-stochastic term-approximation-Brown} and pass to the limit as ${\Delta x}_n \goto 0$ in \eqref{eq:young-discreate-approximation},
and conclude that for any $0\le \phi\in C_c^\infty([0,\infty)\times \R)$ and given any  convex entropy-entropy flux pair $(\beta,\zeta)$ with $\beta,\beta
^\prime$, and  $ \beta^{\prime \prime}$ having  at most polynomial growth and for any $B\in \mathcal{F}_T$, the 
following inequality holds:
 \begin{align}
   0 \le &  \E\Big[\mathds{1}_{B} \int_{\R}\beta(u_0(x))\phi(0,x)\,dx\Big] + 
    \E\Big[ \mathds{1}_{B} \int_0^T \int_{\R} \int_{0}^1 \partial_t \phi(t,x)\beta(u(t,x,\alpha))\,d\alpha \,dx \,dt \Big]  \notag \\
     &+  \E\Big[\mathds{1}_{B}\int_0^T  \int_{\R} \int_{0}^1 \partial_x \phi(t,x) \zeta(u(t,x,\alpha))\, d\alpha\,dx\,dt\Big] \notag \\
     & + \E\Big[\mathds{1}_{B} \int_0^T\int_{\R}\int_{0}^1 \sigma(u(t,x,\alpha)) \beta^\prime( u(t,x,\alpha))\,\phi(t,x)\,d\alpha\, dx\,dW(t) \Big] \notag \\
     & + \frac{1}{2} \E\Big[\mathds{1}_{B}  \int_0^T \int_{\R} \int_{0}^1
 \sigma^2(u(t,x,\alpha))\beta^{\prime\prime}(u(t,x,\alpha)) \phi(t,x)\,d\alpha\,dx\,dt \Big] \notag \\
     & + \E\Big[\mathds{1}_{B}\int_0^T \int_{|z|>0} \int_{\R} \int_{0}^1 \Big\{ \beta \big(u(t,x,\alpha) + \eta(u(t,x,\alpha);z)\big)
     -\beta(u(t,x,\alpha))\Big\}\phi(t,x)\,d\alpha \,dx \, \widetilde{N}(dz,dt)\Big] \notag \\
 & + \E\Big[\mathds{1}_{B}  \int_0^T \int_{|z|>0} \int_{\R} \int_{0}^1 \int_{0}^1 (1-\lambda) 
 \eta^2(u(t,x,\alpha);z)\beta^{\prime\prime}\big(u(t,x,\alpha) + \lambda\, \eta(u(t,x,\alpha);z)\big) \notag \\
 &\hspace{10cm} \times \phi(t,x)\,d\alpha\,d\lambda \,dx\,m(dz)\,dt \Big]. \notag 
 \end{align} 
 In other words,
\begin{align}
   &  \int_{\R}\beta(u_0(x))\phi(0,x)\,dx + 
      \int_{\Pi_T} \int_{0}^1 \Big(\partial_t \phi(t,x)\beta(u(t,x,\alpha))
      +  \partial_x \phi(t,x) \zeta(u(t,x,\alpha))\Big)\, d\alpha\,dx\,dt \notag \\
      & +  \int_0^T\int_{\R}\int_{0}^1 \sigma(u(t,x,\alpha)) \beta^\prime( u(t,x,\alpha))\,\phi(t,x)\,d\alpha\, dx\,dW(t) \notag \\
     & + \frac{1}{2}  \int_0^T \int_{\R} \int_{0}^1
 \sigma^2(u(t,x,\alpha))\beta^{\prime\prime}(u(t,x,\alpha)) \phi(t,x)\,d\alpha\,dx\,dt \notag \\
 & + \int_0^T \int_{|z|>0} \int_{\R} \int_{0}^1 \Big\{ \beta\big(u(t,x,\alpha) + \eta(u(t,x,\alpha);z)\big)
     -\beta(u(t,x,\alpha))\Big\} \phi(t,x)\,d\alpha \,dx \,\widetilde{N}(dz,dt) \notag \\
 & +  \int_0^T \int_{|z|>0} \int_{\R} \int_{0}^1 \int_{0}^1 (1-\lambda) 
 \eta^2(u(t,x,\alpha);z)\beta^{\prime\prime}\big(u(t,x,\alpha) + \lambda\, \eta(u(t,x,\alpha);z)\big) \notag \\
 &\hspace{10cm} \times \phi(t,x)\,d\alpha \,d\lambda\,dx\,m(dz)\,dt \notag \\
  & \ge 0\quad \mathbb{P}- \text{a.s.}\notag
\end{align}
Again, in view of the uniform moment estimate \eqref{uni:moment} along with \cite[Remark $2.4$]{BaVaWit}, we see that 
\[ \sup_{0\le t\le T} \E\Big[||u(t,\cdot,\cdot)||_2^2 \Big] < \infty.\] 
This implies that $u(t,x,\alpha)$ is a generalized entropy solution 
to the problem \eqref{eq:stoc_con_laws}. Again, thanks to the uniqueness result \cite{BaVaWit,BisKarlMaj}, we conclude that
 $u(t,x,\alpha)$ is an independent function of variable $\alpha$ and $\bar{u}(t,x)=\int_0^1 u(t,x,\tau) d\tau = u(t,x,\alpha)$ (for all $\alpha$) is the unique stochastic entropy solution. Moreover, since $u_{\Delta x}$ is bounded in $L^\infty(\Omega \times \Pi_T)$, we conclude that $u_{\Delta x}$  converges to $\bar{u}$ in 
 $L_{\text{loc}}^p(\R; L^p (\Omega \times (0,T))$, for $1\le p < \infty$. 
 


\begin{thebibliography}{99}
	\bibitem{Balder}
E.~J.~Balder.
\newblock Lectures on Young measure theory and its applications in economics.
\newblock{\em Rend. Istit. Mat.Univ. Trieste}, 31 Suppl. 1: 1-69, 2000. 
	 
\bibitem{BaVaWit}
C. Bauzet, G. Vallet and P. Wittbold.
\newblock The Cauchy problem for a conservation law with a multiplicative stochastic perturbation. 
\newblock{\em Journal of Hyperbolic Differential Equations.}, 2012. 
	 
	 	  
\bibitem{bauzet2015}
C. Bauzet.
\newblock Time-splitting approximation of the Cauchy problem for a stochastic conservation law.
\newblock{\em  Math. Comput. Simulation} 118 (2015), 73-86
	
\bibitem{bauzet-fluxsplitting}
C.~ Bauzet, J. Charrier, and T. Gallouët.
\newblock Convergence of flux-splitting finite volume schemes for hyperbolic scalar conservation laws with a multiplicative stochastic perturbation.
\newblock{\em  Submitted}, 2014. 
	 
\bibitem{bauzet-finvolume}
C.~ Bauzet, J. Charrier, and T. Gallouët.
\newblock Stochastic partial differential equations: analysis and computations convergence of monotone finite volume schemes for hyperbolic scalar conservation laws with multiplicative noise.
\newblock{\em Submitted}, 2014. 
	 
	 
\bibitem{BisMaj}
I.~H.~Biswas and A.~K.~Majee.
\newblock Stochastic conservation laws: weak-in-time formulation and strong entropy condition. 
\newblock{\em Journal of Functional Analysis} 267 (2014) 2199-2252.

\bibitem{BisKarlMaj}
I.~H.~Biswas, K.~H.~Karlsen and A.~K.~Majee.
\newblock Stochastic balance laws driven by L\'{e}vy noise. 
\newblock{\em J. Hyperbolic Differ. Equ.} 12 (2015), no. 3, 581-654.

\bibitem{BisKoleyMaj}
I. H. Biswas, U.~ Koley, and A.~ K. Majee.
\newblock  Continuous dependence estimate for conservation laws with  L\'{e}vy noise.
\newblock{\em J. Differ. Equ.}, 259 (2015), 4683-4706.

\bibitem{Chen:2012fk}
 G.~Q.~ Chen, Q.~ Ding, and K. ~H. Karlsen.
 \newblock On nonlinear stochastic balance laws.
 \newblock {\em Arch. Ration. Mech. Anal.}, 204(3):707-743, 2012.

\bibitem{CrandallMajda}
M.~G. Crandall and A.~Majda.
\newblock Monotone difference approximations for scalar conservation laws.
\newblock {\em Math. Comp.}, 34(149):1--21, 1980.

\bibitem{dafermos}
C.~M.~ Dafermos.
\newblock {\em Hyperbolic conservation laws in continuum physics}, volume 325
  of {\em Grundlehren der Mathematischen Wissenschaften [Fundamental Principles
  of Mathematical Sciences]}.
\newblock Springer-Verlag, Berlin, 2000.


\bibitem{Vovelle2010}
A.~ Debussche and J. Vovelle. 
\newblock Scalar conservation laws with stochastic forcing. 
\newblock{\em J. Funct. Analysis}, 259 (2010), 1014-1042. 
  
\bibitem{khanin}
Weinan. E, K. Khanin, A. Mazel, and Y. Sinai.
\newblock Invariant measures for Burgers equation with stochastic forcing.
\newblock {\em Ann. of Math. (2)}, 151(3): 877-960, 2000.
 
   
\bibitem{gallouet00}
R. Eymard, T. Gallouët, and R. Herbin.
\newblock Finite volume methods. 
\newblock {\em  Handbook of numerical analysis, Vol. VII, Handb. Numer. Anal., VII},713-1020. North-Holland, Amsterdam, 2000.
 
\bibitem{nualart:2008}
J. Feng and D. Nualart.
\newblock Stochastic scalar conservation laws.
\newblock {\em J. Funct. Anal.}, 255(2):313-373, 2008.


\bibitem{godu}
E.~ Godlewski and P.-A.~ Raviart.
\newblock {\em Hyperbolic systems of conservation laws}, volume 3/4 of {\em
Math{\'e}matiques \& Applications (Paris) [Mathematics and Applications]}.
\newblock Ellipses, Paris, 1991

\bibitem{Hartenetal}
A.~Harten, P.~D. Lax, and B.~van Leer.
\newblock On upstream differencing and {G}odunov-type schemes for hyperbolic
  conservation laws.
\newblock {\em SIAM Rev.}, 25(1):35--61, 1983.


\bibitem{risebroholden1997}
H.~ Holden and N.~H.~ Risebro.
\newblock Conservation laws with random source.
\newblock{\em Appl. Math. Optim}, 36(1997), 229-241.
   
\bibitem{Kim2003}
J.~U.~ Kim.
\newblock On a stochastic scalar conservation law, 
\newblock{Indiana Univ. Math. J.}  52 (1) (2003) 227-256. 
 

\bibitem{KoleyMaj}
U. Koley and A. K. Majee.
\newblock Convergence of a operator splitting schemes for stochastic balance laws driven by L\'{e}vy noise.
\newblock{\em Submitted.}
 
\bibitem{kroker}
I.~Kroker and C.~Rohde.
\newblock Finite volume schemes for hyperbolic balance laws with multiplicative noise.
\newblock {\em Applied Numerical Mathematics 62}, 441-456, 2012.
 
  
\bibitem{kruzkov}
S.~ N.~ Kruzkov.
\newblock First order quasilinear equations with several independent variables.
\newblock {\em Mat. Sb. (N.S.)} , 81(123): 228-255, 1970.

\bibitem{Kuznetsov}
N.~N. Kuznetsov.
\newblock The accuracy of certain approximate methods for the computation of
weak solutions of a first order quasilinear equation.
\newblock {\em \v Z. Vy\v cisl. Mat. i Mat. Fiz.}, 16(6):1489--1502, 1627, 1976.

 

\bibitem{Oleinik}
O.~A. Ole{\u\i}nik.
\newblock Convergence of certain difference schemes.
\newblock {\em Soviet Math. Dokl.}, 2:313--316, 1961.



\bibitem{Volpert}
A.~I. Vol'pert.
\newblock Generalized solutions of degenerate second-order quasilinear
  parabolic and elliptic equations.
\newblock {\em Adv. Differential Equations}, 5(10-12):1493--1518, 2000.


\end{thebibliography}
 \end{document}